\documentclass[12pt,a4paper,reqno]{amsart}
\usepackage[dvipsnames]{xcolor}
\usepackage{amsmath, amssymb, amsfonts, mathtools, mathrsfs, bbm, dsfont, comment}
\usepackage{empheq}
\usepackage{marginnote}
\usepackage{ esint }
\usepackage{etoolbox}
\usepackage{amsthm}
\newtheorem{theorem}{Theorem}[section]
\newtheorem{lemma}{Lemma}[section]
\newtheorem{proposition}{Proposition}[section]
\newtheorem{corollary}{Corollary}[section]

\newtheorem{remark}{Remark}[section]

\newcommand{\h}{\hspace}
\usepackage[numbers,sort]{natbib} 

\makeatletter
\newcommand\Const[3]{%
	\@ifundefined{#1-#2}%
	{\stepcounter{#3}\expandafter\xdef\csname #1-#2\endcsname{\arabic{#3}}}%
	{}%
	\ifnum\pdfstrcmp{#1}{eps}=0 
	\varepsilon_{\csname #1-#2\endcsname}%
	\else
	#1_{\csname #1-#2\endcsname}%
	\fi
}
\makeatother
\newcommand\C[1]{\Const{C}{#1}{Ccnt}} 

\def\Xint#1{\mathchoice  
	{\XXint\displaystyle\textstyle{#1}}%
	{\XXint\textstyle\scriptstyle{#1}}%
	{\XXint\scriptstyle\scriptscriptstyle{#1}}%
	{\XXint\scriptscriptstyle\scriptscriptstyle{#1}}%
	\int}
\def\XXint#1#2#3{{\setbox0=\hbox{$#1{#2#3}{\int}$}
		\vcenter{\hbox{$#2#3$}}\kern-.58\wd0}}

\def\dashint{\Xint-}

\makeatletter
\def\l@subsection{\@tocline{2}{0pt}{2.5em}{3.5em}{}}
\makeatother

\numberwithin{equation}{section}
\allowdisplaybreaks

\usepackage{graphicx, float, subfigure}

\usepackage[a4paper, top=2.7cm, bottom=2.8cm, left=2.4cm, right=2.4cm,marginparwidth=21mm, marginparsep=1mm]{geometry}

\allowdisplaybreaks

\usepackage{ifthen,changepage}
\usepackage{xargs}
\usepackage{ragged2e} 

\newcommandx{\leftmarginnote}[2][2=0pt]{%
	\checkoddpage
	\ifoddpage
	{\reversemarginpar
		\marginnote{\RaggedRight\tiny #1}[#2]}%
	\else
	{\marginnote{\RaggedRight\tiny #1}[#2]}%
	\fi
}
\newcommandx{\rightmarginnote}[2][2=0pt]
{\checkoddpage
	\ifoddpage
	{\marginnote{#1}[#2]}
	\else
	{\reversemarginpar\marginnote{#1}[#2]}
	\fi}

\newcommand{\p}{\partial}

\newcommand{\pcm}[1]{{\color{purple}#1}}

\usepackage[shortlabels]{enumitem}

\makeatletter
\renewcommand\subsection{\@startsection{subsection}{2}{\z@}%
	{1.25\baselineskip \@plus .2\baselineskip \@minus .2\baselineskip}
	{.6\baselineskip}
	{\normalfont\centering}}

\renewcommand\subsubsection{\@startsection{subsubsection}{3}{\z@}%
	{1.0\baselineskip \@plus .2\baselineskip \@minus .2\baselineskip}%
	{.5\baselineskip}%
	{\normalfont\centering\bfseries}}
\makeatother

\usepackage[colorlinks=true, linkcolor=black, citecolor=black, urlcolor=black]{hyperref}
\usepackage{cleveref}

\usepackage{setspace}
\usepackage{scalerel, stackengine}
\newcommand\pig[1]{\scalerel*[5.5pt]{\big#1}{\ensurestackMath{\addstackgap[1.15pt]{\big#1}}}}
\newcommand\pigl[1]{\mathopen{\pig{#1}}}
\newcommand\pigr[1]{\mathclose{\pig{#1}}}

\title{Local Exact Controllability of Landau--Lifshitz--Gilbert Equation}

\author{Chunxi Jiao}
\address{School of Mathematics and Statistics, UNSW Sydney, NSW 2052, Australia}
\email{\texttt{chunxi.jiao@unsw.edu.au}}
\thanks{}

\author{Ho Man Tai}
\address{School of Mathematics and Statistics, University of Sydney, Sydney, NSW 2006, Australia}
\email{\texttt{homan.tai@sydney.edu.au}}
\thanks{}

\begin{document}

    \begin{abstract}
We prove a local exact controllability result for controlled Landau--Lifshitz--Gilbert equations on $\mathbb T^2$: if the initial energy is sufficiently small, then for any terminal time \(T>0\), there is a localised external magnetic field such that the system can be steered exactly to the terminal value of any nearby uncontrolled trajectory. We first transform the equation to a quasilinear parabolic system on \(\mathbb R^2\) by a suitable stereographic chart. Then the Carleman estimate is established for the linearised system through a decomposition adapted to the self-adjoint and skew-adjoint structure of the conjugated adjoint operator. This yields observability and \(L^\infty\)-null controllability for the linearised system. The nonlinear projected equation is then recovered by a Kakutani fixed-point argument. We also obtain a semi-global controllability result under a hemisphere condition.

\end{abstract}

	\maketitle
	\tableofcontents
	
	\section{Introduction}

    The Landau--Lifshitz--Gilbert (LLG) equation \cite{Gilbert1955ALF,landau1935theory} is a fundamental model in micromagnetics describing the evolution of the magnetization in ferromagnetic materials. In the regime below the Curie temperature, the magnetization (magnetic moment or spin) has fixed magnitude and only its orientation varies, so the state variable takes values on the unit sphere $\mathbb{S}^2:=\{x\in \mathbb{R}^{3}:\, |x|=1\}$. Let the region occupied by the ferromagnetic material be the 2-dimensional periodic domain $\mathbb{T}^2:=(\mathbb{R}/2\pi \mathbb{Z}) \times (\mathbb{R}/2\pi \mathbb{Z})$. The orientation of the magnetization $m: \mathbb{T}^2 \times[0,\infty)  \to \mathbb{S}^2$ satisfies the following equation:
	\begin{equation*}
		\p_t m (x,t)
		= \gamma\,m(x,t)\times H_{\textup{eff}}(x,t)
		- \alpha\,m(x,t)\times\bigl[m(x,t)\times H_{\textup{eff}}(x,t)\bigr]
	\end{equation*}
	where $\gamma\neq 0$ is the gyromagnetic ratio and $\alpha>0$ is the Gilbert damping constant. The effective field $H_{\textup{eff}}$ is induced from the negative variational gradient of the total magnetic free energy. In this paper, we consider the controllability of the LLG equation, where the control acts as an external magnetic field supported in a non-empty open subset $\omega \subseteq \mathbb T^2$. This problem is physically and technologically motivated and meaningful. Indeed, externally applied magnetic fields can be used to steer the magnetization toward a desired switched state in precessional magnetization-reversal experiments; see, for example, \cite{back1999minimum,gerrits2002ultrafast}. In microwave-assisted switching, the control is given by the applied reversing field together with a transverse microwave field, while the target is to drive the magnetization toward the opposite stable state, see \cite{4407618}.\smallskip

    We restrict attention to the exchange-energy case, for which the effective field is given by the Laplacian of the magnetization. The resulting system is a nonlinear geometric evolution equation combining a dissipative part and a precessional part. More precisely, one replaces $H_{\textup{eff}}$ by $H_{\textup{eff}}+ \chi_\omega u$, see \cite[Section 1]{hernandez2023global}, \cite[Section 5.1]{lu2016mini}:
	\begin{align*}
		\p_t m (x,t)
		=\,& \gamma\,m(x,t)\times \big[\Delta m(x,t) 
		+  \chi_\omega u(x,t)\big]\nonumber\\
		&- \alpha\,m(x,t)\times\pigl\{m(x,t)\times \big[\Delta m(x,t) +  \chi_\omega u(x,t)\big]\pigr\}.
	\end{align*}
	Using the triple product formula and the unit length condition for $m$, we have 
	\begin{align}\label{eq. controlled LLG}
		\left\{
		\begin{aligned}
			\p_t m (x,t)
			&= \alpha \big[ \Delta m (x,t)
			+|\nabla m (x,t)|^2 m (x,t) \big] +\gamma\,m(x,t)\times \Delta m (x,t)
			\\
			&\quad+\gamma  m (x,t) \times [ \chi_\omega u(x,t)] 
			- \alpha\,m(x,t)\times\pigl\{m(x,t)\times 
			\big[ \chi_\omega u(x,t)\big]\pigr\}
			&&\text{on $\mathbb{T}^2\times(0,\infty)$}; 
			\\
			m(x,0) &= m_0(x) &&\text{on $\mathbb{T}^2$.} 
		\end{aligned}
		\right. 
	\end{align}

   \subsection{Literature review}

Controllability for parabolic equations have been studied extensively in the literature, see the comprehensive works \cite{micu2004introduction, ammar2011recent,doi:10.1137/S0363012904439696, Yamamoto_2009, fursikov1996controllability, Glowinski_Lions_1994}. More specifically, null controllability for linear parabolic equations can be found in \cite{fernandez2006null,FU20091333,doi:10.1137/S0363012904439696,martinez2006carleman,rousseau2010carleman,COCV_2008__14_2_284_0,martin2016null,lopez2000null,alabau2006carleman,micu2006controllability,doi:10.1137/060678191,MAITY2019153} and the reference therein. For local and global exact controllability for linear parabolic equations, we can refer to \cite{fattorini1971exact, ammar2009kalman,CAO1997174}. For approximate controllability for linear parabolic equations, we can refer to \cite{CAO1997174,10.57262/ade/1356651338,doi:10.3233/ASY-201623,duprez2018positive,zuazua2002controllability,10.57262/ade/1355867413,refId012321,boyer2014approximate}. \smallskip

Nonlinear parabolic equations are particularly close in spirit to the present work. In this direction, \cite{liu2026multiplicativeMobile} studies quasi-linear parabolic equations governed by multiplicative controls with mobile support and proves null controllability. For reaction--diffusion systems, \cite{lebalch2019twoSpecies} proves null-controllability for a two-species system with nonlinear coupling by means of the reflexive uniqueness method, while \cite{lebalch2020localRD} proves local exact controllability to non-negative stationary states using an affine change of variables, a cascade system, spectral inequalities, and an inverse mapping theorem. Bilinear and multiplicative control problems are treated in \cite{alabau2022eigensolutions,khapalov2002nonnegative,khapalov2003multiplicative}: the former obtains local controllability to the (j)-th eigensolution and semi-global controllability in large time for evolution equations of parabolic type, whereas the latter studies global non-negative approximate controllability for semilinear equations governed by bilinear or multiplicative controls in the reaction term. Carleman and fixed-point techniques are used to obtain null controllability or exact controllability to trajectories for semilinear equations in negative-order Sobolev spaces \cite{imanuvilov2003carleman}, heat equations with Fourier or nonlinear boundary Fourier conditions \cite{fernandez2006fourier,doubova2004nonlinearBoundary}, coupled semilinear heat equations with discontinuous diffusion coefficients \cite{doubova2002discontinuous}, and quasi-linear equations with nonlinear terms involving the state and the gradient \cite{doubova2002gradient}. Finally, \cite{anita2002dissipative} considers the null controllability problem for the dissipative semilinear heat equation and shows that, for even mildly superlinear nonlinearities, global null controllability in arbitrarily short time fails.\smallskip

The Navier--Stokes equations form a big class in nonlinear PDE controllability. Carleman-based approaches were developed in \cite{imanuvilov1998exactNS,imanuvilov2001remarks,fursikov1999exactNSBoussinesq,fernandez2004localExactNS}, treating local exact controllability and exact controllability to trajectories for incompressible Navier--Stokes and Boussinesq systems with localized controls. The dimensions and structure of controls were further refined in \cite{fernandez2006nMinusOne,carreno2013localNullNminusOne}, where controllability is obtained with \(N-1\) scalar controls or with one vanishing control component. Another line uses degenerate or finite-dimensional forcing: \cite{agrachev2006degenerate} develops the Agrachev--Sarychev method for 2D Euler and Navier--Stokes equations, while \cite{shirikyan2006approx3DNS,shirikyan2007projections} prove approximate controllability and exact controllability in finite-dimensional projections for the 3D Navier--Stokes equations. Boundary-control and boundary-layer techniques are pursued in \cite{coron2020smallTimeNS,coron2019rectanglePhantom,liao2022smoothNS}, which establish small-time global exact controllability under Navier slip-with-friction conditions, treat the no-slip rectangular case with a small phantom force, and extend the analysis to smooth and Lagrangian controllability. Underactuated internal controls are considered in \cite{coron2014localNull3DNS}, which proves local null controllability of the 3D system with two vanishing control components. In the compressible setting, \cite{ervedoza2012compressible1D} proves local exact boundary controllability for the one-dimensional compressible Navier--Stokes equation. Finally, \cite{bonkile2018burgersReview} surveys Burgers' equation, a scalar nonlinear convection--diffusion model closely related to viscous fluid dynamics.\smallskip

To the best of our knowledge, the present paper is the first to address controllability for the LLG equation. Controllability results for geometric flows remain comparatively sparse. The closest works are \cite{liu2018control} and \cite{CORON2025103761}, both concerning harmonic map heat flow. The former studies the controllability of 2D or 3D harmonic map heat flow by means of an external field. The recently work \cite{CORON2025103761} studies the controllability of the harmonic map heat flow from \(\mathbb T^1\) to \(\mathbb S^k\). The main controllability mechanism is to first exploit the natural dissipation of the uncontrolled flow, which drives the solution close to a harmonic map, and then use localized controls to cross critical energy levels by a power-series expansion argument. Once the solution is close to a constant state, local null controllability is obtained through stereographic projection. The authors further prove controllability between harmonic maps by reducing the dynamics along closed geodesics to a controlled scalar heat equation, and then combining this with deformation arguments within the relevant homotopy class. This yields global controllability to harmonic maps, as well as small-time global exact controllability between harmonic maps in the same homotopy class.

\subsection{Main Results}

To state our main results, we consider the uncontrolled LLG equation 
	\begin{align}\label{eq. uncontrolled LLG}
		\left\{
		\begin{aligned}
			\p_t m (x,t)
			&= \alpha \big[ \Delta m (x,t)
			+|\nabla m (x,t)|^2 m (x,t) \big] +\gamma\,m(x,t)\times \Delta m (x,t) 
			&&\text{on $\mathbb{T}^2\times(0,\infty)$}; 
			\\
			m(x,0) &= m_0(x) &&\text{on $\mathbb{T}^2$.} 
		\end{aligned}
		\right. 
	\end{align}  
     For any $\varepsilon>0$ and \(m_0\in H^1(\mathbb T^2;\mathbb S^2)\), we define the target set
\[
\mathcal T(\varepsilon,T,m_0):=
\left\{
\overline m(\cdot,T)\;:\;
\begin{aligned}
&\overline m \text{ solves \eqref{eq. uncontrolled LLG} with initial condition $\overline m_0 \in H^1(\mathbb T^2;\mathbb S^2) $; }\\
& \text{and } \|m_0-\overline m_0\|_{H^{1}(\mathbb T^2)}\le \varepsilon
\end{aligned}
\right\}.
\]

	\noindent The main theorem is the following local exact controllability under the small energy assumption:
	
	\begin{theorem}[\bf Local exact controllability to trajectories]\label{thm:nullcontrol} Let $\omega$ be an open subset of $\mathbb{T}^2$ and $T>0$. There is $\varepsilon>0$ satisfying the following: for any initial state $m_0\in H^1(\mathbb{T}^2; \mathbb{S}^2)$ with $E(m_0):=\frac12\int_{\mathbb{T}^2}|\nabla m_0|^2<\varepsilon$ and any $m_T\in \mathcal T(\varepsilon,T,m_0)$, there is a control $u\in L^\infty(\omega\times(0,T);\mathbb{R}^3)$ such that the corresponding strong solution of the controlled LLG equation \eqref{eq. controlled LLG} 
    satisfies $m(\cdot,T)=m_T(\cdot) $.
	\end{theorem}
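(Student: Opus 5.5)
Our plan follows the strategy announced in the abstract: reduce to a parabolic system in a chart, prove a Carleman estimate for the linearised adjoint, and close the nonlinear problem by a fixed point.

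\textbf{Step 1: regularity and stereographic reduction.} By small-energy regularity for LLG on $\mathbb T^2$ (Gilbert damping $\alpha>0$ makes the flow parabolic), the reference trajectory $\overline m$ started from $\overline m_0$ has small $L^\infty_tH^1_x$ energy. Since $\|m_0-\overline m_0\|_{H^1}\le\varepsilon$, $E(\overline m_0)\lesssim\varepsilon$ as well. Parabolic smoothing then gives that $\overline m(t)$ is smooth for $t>0$ and uniformly close in $L^\infty$ to a constant point $p\in\mathbb S^2$ on $[\tau,T]$ for any small $\tau>0$. We use the uncontrolled flow (control $u\equiv0$) on $[0,\tau]$ for both $m$ and $\overline m$. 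Both then lie in a small geodesic ball around $p$ with small $W^{1,\infty}$-type norms, which also handles the merely $H^1$ initial data.

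After a rotation we apply stereographic projection from $-p$. This writes $m=\Pi^{-1}(w)$ with $w:\mathbb T^2\to\mathbb R^2$ satisfying a quasilinear system
\[
\p_t w=(\alpha I+\gamma J)\Delta w+F(w,\nabla w)+B(w)\chi_\omega v,
\]
where $J$ is rotation by $\pi/2$ and $F$ is quadratic in $\nabla w$ with coefficients depending on $w$. The control enters through the tangent-plane projection, which is an invertible $2\times2$ matrix $B(w)$ near $0$. We may therefore take the new control $v$ in $\mathbb R^2$ and recover $u$ tangentially. Setting $z=w-\overline w$ turns the task into null controllability of $z$.

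\textbf{Step 2: linearised observability.} We freeze the coefficients along a given state $\hat w$ that is small in $L^\infty(W^{1,\infty})$. The linear system is then
\[
\p_t z-(\alpha I+\gamma J)\Delta z-A_1\cdot\nabla z-A_0z=\chi_\omega v,
\]
with small bounded $A_i$. We prove a Fursikov--Imanuvilov Carleman estimate for the adjoint $-\p_t\varphi-(\alpha I-\gamma J)\Delta\varphi-\dots$, using the standard weights $\alpha(x,t)=(e^{2\lambda\|\psi\|}-e^{\lambda\psi})/(t(T-t))$ with $\psi$ vanishing outside $\omega_0\Subset\omega$. After conjugating by $e^{-s\alpha}$, we split the operator into its self-adjoint part and its skew-adjoint part. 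Here $\alpha\Delta$ and the $\p_t$ terms play the usual roles, while $\gamma J\Delta$ is skew but commutes with $J$. The crucial point is that the cross terms coming from $\gamma J$ cancel or are controlled, because $J$ is antisymmetric and commutes with the scalar weight.

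\textbf{Step 3: $L^\infty$ controls and Kakutani.} From observability we obtain, via a weighted duality argument (as in Imanuvilov/Fernández-Cara--Guerrero), controls $v$ with $e^{s\alpha}v\in L^2$. A bootstrap using parabolic regularity of the adjoint then gives $v\in L^\infty$ with bounds uniform in small $\hat w$, and the controlled state lies in a space embedding into $L^\infty(W^{1,\infty})$, e.g.\ $W^{1,p}_t L^p\cap L^pW^{2,p}$ with $p>4$. The map sending $\hat w$ to the set of controlled states is convex-valued, compact, and upper semicontinuous on a small ball. Kakutani therefore gives a fixed point, and the uniform $L^\infty$ bounds keep the solution in the chart; mapping back gives $u$ and $m(T)=\overline m(T)=m_T$.

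\textbf{Main obstacle.} The main obstacle is Step 2: uniformity of the Carleman estimate for the non-self-adjoint principal part $(\alpha I+\gamma J)\Delta$ under quasilinear perturbations. We will first treat the complex scalar form $(\alpha+i\gamma)\Delta$, identifying $\mathbb R^2\cong\mathbb C$, and then absorb the variable-coefficient terms as small perturbations.
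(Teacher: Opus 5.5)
Your plan follows the paper's route closely. Both let the uncontrolled flow carry the state into a stereographic chart, prove a Carleman estimate for the complexified adjoint via a self-adjoint/skew-adjoint splitting, build $L^\infty$ controls, and close with Kakutani in $W^{1,p}(0,T;L^p)\cap L^p(0,T;W^{2,p})$ with $p>4$. Your $L^\infty$ step (weighted controls plus a bootstrap) is a legitimate alternative to the paper's cutoff-plus-interior-regularity construction.

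The splitting is the right idea. Write $\varphi,\xi$ for the weights and $q$ for the adjoint state (your $\alpha(x,t)$ clashes with the damping constant), and set $\mathcal A_0=-\Delta-s^2|\nabla\varphi|^2$, $\mathcal B_0=-2s\nabla\varphi\cdot\nabla-s\Delta\varphi$. Then $\gamma J\mathcal B_0$ belongs to the symmetric part and $\gamma J\mathcal A_0$ to the skew part. Since $J^2=-I$, the commutator becomes $(\alpha^2+\gamma^2)[\mathcal A_0,\mathcal B_0]$ plus lower-order terms involving $\partial_t\varphi$ and $\partial_{tt}\varphi$.

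\textbf{The gap} is your treatment of the lower-order terms, which you propose to absorb ``as small perturbations''. The first-order coefficients $A_1$ depend on $\nabla\hat w$ and $\nabla\overline w$. Your fixed-point class only controls $\hat w$ in $L^\infty(W^{1,\infty})$, with second derivatives merely in $L^p$, so $A_1$ is merely bounded. The adjoint then contains $\sum_i\partial_i(A_{1,i}^\top q)$, which lies only in $L^2(0,T;H^{-1})$. It cannot be placed on the right-hand side of a Carleman estimate with $L^2$ source, nor expanded as $(\nabla\cdot A_1^\top)q+A_1^\top\nabla q$. Smallness of $A_1$ does not help, because the obstruction is regularity, not size.

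What is missing is a Carleman estimate for $-\partial_t q-A^\top\Delta q=F_0+\sum_i\partial_iF_i$ in which $F_i\in L^2$ enters only through $s^2\lambda^2\int_Q e^{-2s\varphi}\xi^2|F_i|^2$. With $F_i=-A_{1,i}^\top q$, this term is absorbed by $s^3\lambda^4\int_Q e^{-2s\varphi}\xi^3|q|^2$ for $s$ large. The constants then depend only on $\|A_1\|_\infty$ and $\|A_0\|_\infty$, which is exactly the uniformity your Kakutani step needs.

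The paper obtains this estimate (\Cref{lem carleman-div}) by duality from the principal estimate. Let $L^*=-\partial_t-A^\top\Delta$. Riesz representation in the completion of $C^\infty(\overline Q)$ under the norm $\|e^{-s\varphi}L^*p\|_{L^2(Q)}^2+s^3\lambda^4\int_{\omega\times(0,T)}e^{-2s\varphi}\xi^3|p|^2$ produces a forward solution $\zeta$ with weighted bounds on $\zeta$ and $\nabla\zeta$. A transposition identity between $\zeta$ and $q$ then gives the estimate.

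Two smaller corrections:
\begin{itemize}
\item On $\mathbb T^2$ your auxiliary function $\psi$ must satisfy $|\nabla\psi|>0$ on $\mathbb T^2\setminus\omega_0$, not vanish there. Otherwise the leading positive term, which carries the factor $|\nabla\psi|^4$, disappears outside $\omega_0$.
\item For the Kakutani map to send the ball into itself, $w(\tau)-\overline w(\tau)$ must be small in the trace space $W^{2-2/p,p}$ (the paper uses $W^{2,p}$). Smallness in a $W^{1,\infty}$-type norm is not enough.
\end{itemize}
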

The proof is presented in \Cref{sec. Controllability with small initial energy}. Based on this theorem, we can establish two corollaries, whose proofs are given in \Cref{sec. Controllability with small initial energy,sec. Controllability with hemisphere condition} respectively.

    \begin{corollary}
        [\bf Local exact controllability to any constant targets]\label{cor:null control} Let $\omega$ be an open subset of $\mathbb{T}^2$. There is $T>0$ and $\varepsilon>0$ satisfying the following: for any initial state $m_0\in H^1(\mathbb{T}^2; \mathbb{S}^2)$ with $E(m_0)<\varepsilon$ and any constant $p^* \in \mathbb{S}^2$, there is a control $u\in L^\infty(\omega\times(0,T);\mathbb{R}^3)$ such that the corresponding strong solution of the controlled LLG equation \eqref{eq. controlled LLG} 
        satisfies $m(\cdot,T)=p^* $.
    \end{corollary}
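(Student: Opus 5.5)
Here is the plan: apply \Cref{thm:nullcontrol} with the constant map as the target trajectory. Constant maps solve \eqref{eq. uncontrolled LLG}, since $\Delta p^*=0$ and $\nabla p^*=0$, so $\overline m(\cdot,t)\equiv p^*$ is a stationary uncontrolled trajectory. Thus $p^*\in\mathcal T(\varepsilon,T,m_0)$ as soon as $\|m_0-p^*\|_{H^1(\mathbb T^2)}\le\varepsilon$. The obstacle is that small energy $E(m_0)$ only makes $m_0$ close to \emph{some} constant (its direction of average), not to an arbitrary $p^*\in\mathbb S^2$. The plan therefore has two stages. First bring the state near a suitable constant. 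Then move between constants, which is a genuinely large motion on $\mathbb S^2$ (e.g.\ to the antipode). This explains why the corollary asserts the existence of some $T$ rather than holding for every $T$.

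\textbf{Step 1 (reach a point close to a constant).} By Poincaré, $\|m_0-\bar m_0\|_{L^2}\lesssim \sqrt{E(m_0)}$, where $\bar m_0$ is the mean. Since $|m_0|=1$, we have $|\bar m_0|\ge 1-C\sqrt{E(m_0)}$, so $q_0:=\bar m_0/|\bar m_0|\in\mathbb S^2$ satisfies $\|m_0-q_0\|_{H^1}\le C\sqrt{\varepsilon'}$. Choose the energy threshold $\varepsilon'$ so that $C\sqrt{\varepsilon'}$ is below the $\varepsilon$ of \Cref{thm:nullcontrol} (with some fixed time $T_1$, e.g.\ $T_1=1$). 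Then \Cref{thm:nullcontrol} gives a control on $(0,T_1)$ with $m(\cdot,T_1)=q_0$ exactly. Note that $E(q_0)=0$.

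\textbf{Step 2 (move between constants).} I connect $q_0$ to $p^*$ along a great circle by finitely many points $q_0=q^0,q^1,\dots,q^N=p^*$. Consecutive points satisfy $\|q^{k}-q^{k+1}\|_{H^1(\mathbb T^2)}=2\pi|q^k-q^{k+1}|\le\varepsilon$. Here $N$ is bounded uniformly in $q_0,p^*$, since $|q_0-p^*|\le 2$. At each stage the current state $q^k$ is constant, so $E(q^k)=0<\varepsilon$, and $q^{k+1}$ is a stationary trajectory starting $\varepsilon$-close to $q^k$. Hence $q^{k+1}\in\mathcal T(\varepsilon,T_1,q^k)$, and \Cref{thm:nullcontrol} supplies an $L^\infty$ control on each interval $(kT_1+T_1,(k+1)T_1+T_1)$ steering $q^k$ to $q^{k+1}$.

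\textbf{Concatenation.} Set $T:=(N+1)T_1$, with $N$ depending only on $\varepsilon$ (it works for all $p^*$). On $(0,T)$, take $u$ to be the piecewise control: the Step 1 control followed by each stage control. If a run ends early, pad with $u=0$ on constant states, which remain stationary. The resulting $u$ lies in $L^\infty(\omega\times(0,T))$. Gluing strong solutions at junction times is legitimate because the states at junction times are smooth constants or $H^1$ maps, and each piece is a strong solution with matching traces. The only point needing care is this gluing, namely that the concatenated function is a strong solution of \eqref{eq. controlled LLG} on the whole of $(0,T)$. It follows from uniqueness and regularity of strong solutions used in the proof of \Cref{thm:nullcontrol}.
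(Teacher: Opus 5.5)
Your proposal is correct and is essentially the paper's argument. You reach the mean-direction constant from \Cref{lem:small-energy-const} exactly in unit time via \Cref{thm:nullcontrol}, using that the constant map is a stationary uncontrolled trajectory; you then traverse a geodesic to $p^*$ in $N$ steps, with $N$ depending only on $\varepsilon$, applying \Cref{thm:nullcontrol} between consecutive constants and concatenating, so that $T=N+1$ is uniform in $m_0$ and $p^*$ (your padding with $u=0$ is unnecessary if you always use exactly $N$ equal pieces, as the paper does, but it is harmless).
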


    \begin{corollary} [\bf Semi global exact controllability to trajectories]
\label{cor:two-stage-global-a}
Let $T>0$, $\omega$ be an open subset of $\mathbb{T}^2$, $p_0\in \mathbb{S}^2$ with $p_{0,3} >0$, and $
m_0\in H^2(\mathbb T^2;\mathbb{S}^2)$ for some $p>2$ with $
\inf_{x\in \mathbb T^2} m_0(x)\cdot e_3>0$. There are $\varepsilon>0$ and a pair of nonnegative constants $(H_1,H_2)$ such that for any $m_T \in \mathcal{T}(\varepsilon,T/3,p_0)$, there is $u\in L^\infty(\omega\times(2T/3,T);\mathbb{R}^3)$ in which the strong
solution $m$ of
\begin{equation}
\label{eq:LLG-two-stage}
\left\{
\begin{aligned}
\partial_t m
&=
\alpha\bigl(\Delta m+|\nabla m|^2m\bigr)
+\gamma\,m\times \Delta m
+\gamma\,m\times B
-\alpha\,m\times\bigl(m\times B\bigr)
&&\text{in }\mathbb T^2\times(0,T);\\
m(x,0)&=m_0(x)
&&\text{on }\mathbb T^2,
\end{aligned}
\right.
\end{equation}
with the control
$$
B(x,t):=
\begin{cases}
H_1 e_3, & 0\le t\le T/3,\\
H_2 p_0, & T/3<t\le 2T/3,\\
\chi_\omega u(x,t), & 2T/3<t\le T,\\
\end{cases}$$ 
satisfies $m(\cdot,T)=m_T(\cdot)$. 
    \end{corollary}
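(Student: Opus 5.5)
The plan is three stages matching the three time intervals of the control $B$. In each of the first two stages the field is constant and uniform, and we use it to push the magnetization into a small-energy neighbourhood of $p_0$. The third stage then invokes \Cref{thm:nullcontrol} on $(2T/3,T)$.

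\emph{Stage 1, $[0,T/3]$, $B=H_1e_3$.} Set $w:=m\cdot e_3$. Dotting the equation with $e_3$ and using $m\times(m\times e_3)=(m\cdot e_3)m-e_3$, we get
\[
\partial_t w=\alpha\Delta w+\alpha|\nabla m|^2 w+\gamma\,(m\times\Delta m)\cdot e_3+\alpha H_1(1-w^2).
\]
The precessional term is a divergence, $(m\times\Delta m)\cdot e_3=\nabla\cdot\bigl((m\times\nabla m)\cdot e_3\bigr)$, which keeps a maximum principle from being immediate. So we work instead with the stereographic chart from $-e_3$ used in \Cref{sec. Controllability with small initial energy}. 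Since $\inf m_0\cdot e_3>0$, the projected map $\psi$ has $\|\psi(0)\|_\infty<1$. It satisfies a quasilinear parabolic system plus the term coming from the uniform field, which acts as a pointwise contraction $-\alpha H_1\psi$ (with a rotation $\gamma H_1 J\psi$). Because $m_0\in H^2$ and $H^2\subset L^\infty\cap W^{1,p}$ in dimension two, we have local well-posedness in $H^2$. A maximum principle for $|\psi|^2$ in the projected equation, where the nonlinear gradient terms have a favourable sign structure (as in the harmonic map heat flow, whose hemisphere-valued solutions remain in the hemisphere), gives that $\sup|\psi|$ is nonincreasing. Hence the solution is global on $[0,T/3]$ and stays in the open upper hemisphere.

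We then choose $H_1$ large. The damping term $\alpha H_1$ gives exponential relaxation toward $e_3$: the bound $\|\psi(t)\|_\infty\le e^{-c H_1 t}$ follows from the comparison ODE. For the gradient we combine the Bochner-type energy inequality for hemisphere-valued flows with parabolic smoothing. This should give $E(m(T/3))\le C(m_0)e^{-cH_1T}$ and $\|m(T/3)-e_3\|_{H^2}$ small.

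\emph{Stage 2, $[T/3,2T/3]$, $B=H_2p_0$.} Because $p_{0,3}>0$, the state $m(T/3)$ is $H^2$-close to $e_3$, which lies in the hemisphere centred at $p_0$. We repeat the Stage 1 argument with $e_3$ replaced by $p_0$ and choose $H_2$ so that $m(2T/3)$ is within the required distance of $p_0$ in $H^1$. If $p_0=e_3$ we may take $H_2=0$. This is the reason the statement allows nonnegative constants.

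\emph{Stage 3, $(2T/3,T)$.} Since $\mathcal T(\varepsilon,T/3,p_0)$ consists of time-$T/3$ values of uncontrolled trajectories starting $\varepsilon$-close to $p_0$ in $H^1$, we would like to apply \Cref{thm:nullcontrol} on a horizon of length $T/3$ with initial datum $m(2T/3)$. The theorem only permits targets in $\mathcal T(\varepsilon,T/3,m(2T/3))$, so we must show $\mathcal T(\varepsilon,T/3,p_0)\subseteq\mathcal T(\varepsilon',T/3,m(2T/3))$, where $\varepsilon'$ is the constant from \Cref{thm:nullcontrol}. If $\overline m_0$ is $\varepsilon$-close to $p_0$, then it is $(\varepsilon+\|m(2T/3)-p_0\|_{H^1})$-close to $m(2T/3)$. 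So we take $\varepsilon=\varepsilon'/2$ and choose $H_1,H_2$ so that $\|m(2T/3)-p_0\|_{H^1}<\varepsilon'/2$; this also makes $E(m(2T/3))<\varepsilon'$. By time-translation invariance of the autonomous equation, \Cref{thm:nullcontrol} then gives $u\in L^\infty(\omega\times(2T/3,T))$ with $m(T)=m_T$.

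\emph{Main obstacle.} The hard step is the quantitative $H^1$ decay in Stages 1--2 in the presence of the precession term $\gamma m\times\Delta m$. I would first try to obtain the $L^\infty$ contraction in stereographic coordinates, where the Gilbert damping makes the system uniformly parabolic. I would then get $H^1$ decay by testing the projected equation against $-\Delta\psi$. The lower-order terms are controlled because $\|\psi\|_\infty<1$ is preserved and $\|\nabla\psi\|_{L^4}$ can be bounded using the $H^2$ regularity.
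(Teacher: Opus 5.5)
Your three-stage architecture and Stage 3 coincide with the paper's proof: halve the constant of \Cref{thm:nullcontrol}, use $\mathcal T(\varepsilon/2,T/3,p_0)\subset\mathcal T(\varepsilon,T/3,m(2T/3))$, and translate in time. The gap is in Stages 1--2. The maximum principle you invoke for $|\psi|^2$ ($\sup|\psi|$ nonincreasing) is false when $\gamma\neq0$. So hemisphere preservation, the comparison bound $\|\psi(t)\|_\infty\le e^{-cH_1t}$, and the global existence you derive from them are unsupported.

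The precession obstruction you noticed for $m\cdot e_3$ does not disappear in the stereographic chart; it becomes the complex diffusion coefficient. From $\partial_t\psi=(\alpha+i\gamma)[\Delta\psi-H\psi+\mathcal N(\psi)]$ one gets
\[
\partial_t|\psi|^2=\alpha\Delta|\psi|^2-2\alpha|\nabla\psi|^2-2\gamma\,\mathrm{Im}(\overline{\psi}\Delta\psi)-2\alpha H|\psi|^2+2\,\mathrm{Re}\bigl[(\alpha+i\gamma)\overline{\psi}\mathcal N(\psi)\bigr].
\]
At a maximum point of $|\psi|$, the gradient terms do combine to something nonpositive when $|\psi|\le1$; this is the harmonic map heat flow computation. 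But writing $\psi=re^{i\theta}$ there, $\mathrm{Im}(\overline{\psi}\Delta\psi)=r^2\Delta\theta$, which has no sign and is not dominated by $H|\psi|^2$. Concretely, the kernel of $e^{t(\alpha+i\gamma)\Delta}$ on $\mathbb R^2$ has $L^1$-norm $\sqrt{\alpha^2+\gamma^2}/\alpha>1$ for every $t>0$. Small-amplitude, fine-scale data therefore make $\sup|\psi|$ grow on time scales much shorter than $1/H$, whatever $H$ is.

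Your fallback $H^1$ estimate does not close either. Testing with $-\Delta\psi$ leaves a term of size $\|\nabla\psi\|_{L^2}\|\Delta\psi\|_{L^2}^2$, which $\alpha H\|\nabla\psi\|_{L^2}^2$ cannot absorb without small energy. Local $H^2$ well-posedness gives no bound uniform on $[0,T/3]$, and global existence for large-energy data is exactly what needs proof.

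The missing idea is to let the large field act at the $H^2$ level, where it beats a polynomial nonlinearity, rather than through any pointwise principle. The paper sets $Z=\|(1-\Delta)\psi\|_{L^2}^2$ and derives, on Galerkin approximations,
\[
Z'+c\|\nabla(1-\Delta)\psi\|_{L^2}^2+2\alpha HZ\le C(1+Z^3)Z .
\]
For $2\alpha H\ge C(1+R^3)+1$ with $R\gtrsim\|\psi_0\|_{H^2}^2$, $Z$ is nonincreasing. This gives global existence and a uniform $H^2$ (hence $L^\infty$) bound. No hemisphere preservation is needed: the chart covers $\mathbb S^2\setminus\{-e_3\}$, and the $L^\infty$ bound on $\psi$ only has to keep $m$ off $-e_3$.

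The $L^2$ identity then gives $\|\psi(L)\|_{L^2}^2\le e^{-2\alpha HL}\|\psi_0\|_{L^2}^2+C/H$. Interpolating with the uniform $H^2$ bound yields $H^1\cap L^\infty$ smallness of $m(L)-e_3$. The hemisphere hypothesis is used only to put $\psi_0\in H^2$. At Stage 2, the condition $\inf m(T/3)\cdot p_0>0$ comes from $\|m(T/3)-e_3\|_{L^\infty}<p_{0,3}/4$.
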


To prove \Cref{thm:nullcontrol}, we first let the LLG flow evolve without control. If \(E(m_0)\) is sufficiently small, then there exists a time \(t_0>0\) at which the uncontrolled solution is close to \(p_0\) in the \(W^{2,p}\)-norm, by energy dissipation and parabolic regularization. Thus the state at time \(t_0\) lies in a stereographic chart. The stereographic projection transforms the LLG equation into a quasilinear parabolic system on \(\mathbb R^2\). In these coordinates, the exact controllability problem is reduced to a null controllability problem for the projected system.
\smallskip

The main technical step is the null controllability of the projected parabolic system. The first difficulty comes from the principal part of its linearization, which contains principal matrix
\[
A=\alpha I+\gamma
\begin{pmatrix}
0&-1\\
1&0
\end{pmatrix}.
\]
When \(\gamma\neq0\), this matrix is non-diagonalizable over \(\mathbb R\) and therefore standard Carleman estimate cannot be applied in this case. One possible approach is to identify \(\mathbb R^2\) with \(\mathbb C\) and apply the controllability result for equations with complex principal part in \cite{FU20091333}; another is to use the general theory of non-diagonalizable parabolic systems in \cite{fernandez2015controllability}, where the diffusion matrix is allowed to have Jordan blocks of dimension at most 4. In our work, we prove the required Carleman estimate directly by using a decomposition adapted to the special structure of \(A\). More precisely, after conjugating the adjoint operator by the Carleman weight, we split it according to its self-adjoint and skew-adjoint components, rather than according to the order of the Carleman parameters. This allows the mixed imaginary terms generated by the precessional part to be absorbed into positive \(L^2\)-norm terms, leaving only lower-order commutators that can be controlled. 
\smallskip

The second difficulty comes from the nonlinear terms in the projected equation. The associated linearized null controllability problem may have nonunique controls, so the self map is naturally multivalued. We therefore use Kakutani's fixed-point theorem. The main task is to prove compactness and upper semicontinuity of this multivalued self map. This is delicate because the frozen coefficients depend on both the trajectory and its gradient, while the controls are only uniformly bounded in \(L^\infty\). We overcome this by combining the uniform \(L^\infty\)-control estimate with maximal-regularity estimates for the controlled linearized system, obtaining uniform bounds in a suitable Sobolev space. The compact embedding of this space into \(C([0,T];C^1(\mathbb T^2))\) gives strong convergence of the frozen trajectories, which is sufficient to pass to the limit in the coefficients. At the same time, the \(L^\infty\)-bound gives weak-star compactness of the controls, and the strong convergence of the states preserves the null terminal condition. Hence the self map has a closed graph and is upper semicontinuous.
\smallskip

\Cref{cor:null control} is obtained by iterating \Cref{thm:nullcontrol} along a finite chain of nearby constant states. \Cref{thm:nullcontrol} first drives the solution to the nearby constant state \(p_0\in \mathbb{S}^2\) induced by the small initial energy. To reach an arbitrary prescribed \(p^\ast\in\mathbb S^2\), we join \(p_0\) to \(p^\ast\) by a minimizing geodesic on the sphere and divide it into finitely many sufficiently short pieces. Consecutive constant states in this partition are close enough in \(H^1(\mathbb T^2)\) for \Cref{thm:nullcontrol} to be applied at each step. \smallskip

The terminal time in \Cref{cor:null control} cannot be prescribed arbitrarily small by this concatenation argument. Indeed, the small energy threshold \(\varepsilon_*=\varepsilon_*(T)\) in \Cref{thm:nullcontrol} depends on the time horizon and tends to zero exponentially as \(T\to0^+\). If one tries to steer \(p_0\) to \(p^\ast\) in a fixed time \(T>0\) by dividing the geodesic into \(N \in \mathbb{N}\) pieces, then each step has length about \(T/N\). Although consecutive constant states are then separated by order \(1/N\), the corresponding small energy threshold \(\varepsilon_*(T/N)\) may be exponentially small in \(N/T\) by \eqref{5871}. Thus refining the partition decreases the distance between successive constant targets, but it also reduces the time interval available for each application of \Cref{thm:nullcontrol} from \(T\) to approximately \(T/N\). Consequently, the relevant small energy threshold is no longer \(\varepsilon_*(T)\), but \(\varepsilon_*(T/N)\), which may be exponentially smaller. Hence this method yields controllability after a sufficiently large concatenated time, but not for every prescribed \(T>0\).\smallskip

\Cref{cor:two-stage-global-a} uses a large-field stabilization argument to replace the small-energy assumption, under the hemisphere condition on the initial datum. The basic mechanism is that, if the solution initially has positive projection along some direction \(p^\dagger\in\mathbb S^2\), then applying a sufficiently large constant magnetic field in the direction \(p^\dagger\) drives the solution close to \(p^\dagger\). Indeed, passing to stereographic coordinates, the constant field produces a large damping effect on the projected variable, and an energy estimate shows that this projected variable becomes small as the field strength increases. This mechanism is used twice. First, a large constant field in the \(e_3\)-direction drives the initial state close to \(e_3\). Since the prescribed point \(p_0\) satisfies \(p_{0,3}>0\), the resulting state lies in the hemisphere centered at \(p_0\). Second, a large constant field in the \(p_0\)-direction drives the solution close to \(p_0\) in \(H^1\). Then \Cref{thm:nullcontrol} applies on the remaining time interval and steers the solution to any target in the corresponding local target class. Concatenating these two constant-field stages with the final localized control proves \Cref{cor:two-stage-global-a}.

\subsection{Notation and organisation}

Throughout the paper, we denote by
\[
I:=\begin{pmatrix}1&0\\[2pt]0&1\end{pmatrix},\quad
J:=\begin{pmatrix}0&-1\\[2pt]1&0\end{pmatrix}
\quad \text{and} \quad
A:=\alpha I+\gamma J.
\]
We write \(\chi_\omega\) for the characteristic function of the set \(\omega\). Moreover, we set $Q:=\mathbb{T}^2\times(0,T)$, except in Section 3, we  write \(Q:=\mathbb{T}^n\times(0,T)\) for $n\in \mathbb{N}$. Here $\mathbb{T}^n:=(\mathbb{R}/2\pi \mathbb{Z}) \times \ldots \times (\mathbb{R}/2\pi \mathbb{Z})$ is the $n$-dimensional periodic domain. If we consider an open set \(\omega_0\subset\subset\omega\), then it means that \(\overline{\omega_0}\subset\omega\). Finally, for \(u:\mathbb{T}^2\to\mathbb{R}^m\), we define the energy functional by $E(u):=\frac12\int_{\mathbb{T}^2}|\nabla u|^2.$ \smallskip

The paper is organised as follows. In Section 2, we prove that small energy implies quantitative closeness to a constant state and derive the regularization estimates needed to enter a stereographic chart. Section 3 is devoted to Carleman estimates for the relevant linear parabolic systems, including the case of divergence-form lower-order terms. In Section 4, we deduce observability and null controllability for the linearized system, together with an $L^\infty$ control construction. Finally, in Section 5, we apply a fixed-point argument to the projected nonlinear equation and complete the proof of the main results for the original LLG system.

	\section{Uniform distance estimate from a point on the sphere}\label{sec Uniform distance estimate from a point on the sphere}

	In this section, we let the solution \(m\) evolve under the uncontrolled equation \eqref{eq. uncontrolled LLG} up to a suitable time \(t_0\). The purpose of introducing this time \(t_0\) is to ensure that the state \(m(\cdot,t_0)\) satisfies smallness estimates needed to serve as an admissible initial datum for the controlled equation \eqref{eq. controlled LLG} after passing to stereographic coordinates.\smallskip
	
	By Poincar\'e's inequality, if the Dirichlet energy of a map is below a suitable threshold, then its \(L^2\)-distance to some constant state can be quantitatively controlled by the energy.
	\begin{lemma}[\bf $L^2$-distance estimate by energy]\label{lem:small-energy-const}
		For every $u\in H^1(\mathbb T^2;\mathbb S^2)$ with $E(u):=\dfrac{1}{2}\displaystyle\int_{\mathbb T^2}|\nabla u|^2 < 2\pi^2$,
		there exists $p_0\in\mathbb S^2$ satisfying
		\[
		\|u-p_0\|_{H^1(\mathbb T^2)}\le 3\sqrt{2}\sqrt{E(u)}.
		\]
	\end{lemma}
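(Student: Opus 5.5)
Let $\bar u := \frac{1}{|\mathbb T^2|}\int_{\mathbb T^2} u \in \mathbb R^3$, with $|\mathbb T^2| = 4\pi^2$. The plan is to take $p_0 := \bar u/|\bar u|$, i.e. the nearest-point projection of the mean onto $\mathbb S^2$. Two things must then be checked: that $\bar u \neq 0$ under the energy hypothesis, and that $u$ is quantitatively close to $p_0$ in $L^2$. The gradient part of the $H^1$-norm needs no work, since $p_0$ is constant and so $\|\nabla(u-p_0)\|_{L^2}^2 = 2E(u)$.

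First, Poincar\'e--Wirtinger on $\mathbb T^2$ has sharp constant $1$, because the first nonzero eigenvalue of $-\Delta$ on $(\mathbb R/2\pi\mathbb Z)^2$ is $1$. Applying it componentwise gives
\[
\|u-\bar u\|_{L^2}^2 \le \|\nabla u\|_{L^2}^2 = 2E(u).
\]
Since $|u|=1$ a.e., we have
\[
\|u-\bar u\|_{L^2}^2 = \int_{\mathbb T^2}\bigl(1 - 2u\cdot\bar u + |\bar u|^2\bigr) = 4\pi^2\bigl(1-|\bar u|^2\bigr).
\]
Hence $1-|\bar u|^2 \le E(u)/(2\pi^2) < 1$, so $\bar u\neq 0$ and $p_0$ is well defined. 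This is exactly where the threshold $2\pi^2$ enters.

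Next, we bound $\|u-p_0\|_{L^2}$ by the triangle inequality:
\[
\|u-p_0\|_{L^2} \le \|u-\bar u\|_{L^2} + 2\pi\bigl(1-|\bar u|\bigr).
\]
The key elementary inequality is $1-|\bar u| \le 1-|\bar u|^2$, valid since $0<|\bar u|\le 1$. Together with the previous step it gives
\[
2\pi\bigl(1-|\bar u|\bigr) \le 2\pi\,\frac{E(u)}{2\pi^2} = \frac{E(u)}{\pi}.
\]
Because $E(u)<2\pi^2$, we have $E(u) = \sqrt{E(u)}\sqrt{E(u)} < \sqrt2\,\pi\sqrt{E(u)}$, so the second term is at most $\sqrt2\sqrt{E(u)}$. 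The first term is at most $\sqrt2\sqrt{E(u)}$ by Poincar\'e. Therefore $\|u-p_0\|_{L^2}\le 2\sqrt2\sqrt{E(u)}$.

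Finally, combining with the gradient part,
\[
\|u-p_0\|_{H^1}^2 \le 8E(u) + 2E(u) = 10E(u) \le 18E(u),
\]
which gives $\|u-p_0\|_{H^1} \le 3\sqrt2\sqrt{E(u)}$ as claimed. The only delicate point is converting the quadratic control $1-|\bar u|^2$ into a bound linear in $\sqrt{E(u)}$. This uses the energy threshold a second time, and the constant $3\sqrt2$ leaves room for whichever normalisation of the $H^1$-norm the paper uses.
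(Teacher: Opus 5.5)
Your proof is correct and follows the paper's argument essentially step for step. You use the same choice $p_0=\bar u/|\bar u|$, the sharp Poincar\'e constant $1$ together with the identity $\|u-\bar u\|_{L^2}^2=4\pi^2(1-|\bar u|^2)$, the inequality $1-|\bar u|\le 1-|\bar u|^2$, and the threshold $E(u)<2\pi^2$ to turn $E(u)/\pi$ into $\sqrt2\sqrt{E(u)}$. The only cosmetic difference is that you combine the $L^2$ and gradient parts in quadrature, obtaining $\sqrt{10}\sqrt{E(u)}$, whereas the paper adds them linearly: $2\sqrt2\sqrt{E(u)}+\sqrt2\sqrt{E(u)}=3\sqrt2\sqrt{E(u)}$.
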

	
	\begin{proof}
		Let $\overline{u}:=\frac{1}{(2\pi)^2}\int_{\mathbb T^2}u \in\mathbb R^3$. Expanding $ |u-\overline{u}|^2$ and using Poincar\'e's inequality yield
		\begin{equation}\label{eq:ubar-lower1}
			1-|\overline{u}|^2=\dfrac{1}{(2\pi)^2}
			\int_{\mathbb T^2}|u-\overline{u}|^2
			\le \dfrac{1}{2\pi^2} E(u)<1,
		\end{equation}
		where the Poincar\'e constant is $1$ in this case. Therefore, we can define $
		p_0:=\frac{\overline{u}}{|\overline{u}|}\in\mathbb S^2$ which satisfies
		\begin{align}\label{1475}
			\|u-p_0\|_{L^2(\mathbb T^2)}\le \|u-\overline{u}\|_{L^2(\mathbb T^2)}+\|\overline{u}-p_0\|_{L^2(\mathbb T^2)}
			= \|u-\overline{u}\|_{L^2(\mathbb T^2)}+2\pi \pig| |\overline{u}|-1 \pig|.
		\end{align}
		Moreover, using \eqref{eq:ubar-lower1}, we have
		\[
		\pig||\overline{u}|-1\pig|
		\le 1-|\overline{u}|^2
		=\dfrac{1}{(2\pi)^2}\int_{\mathbb T^2}|u-\overline{u}|^2.
		\]
		Therefore, \eqref{1475} and Poincar\'e's inequality give
		\[
		\|u-p_0\|_{L^2(\mathbb T^2)}
		\le \|u-\overline{u}\|_{L^2(\mathbb T^2)}+\dfrac{1}{2\pi}\|u-\overline{u}\|_{L^2(\mathbb T^2)}^2
		\leq  \sqrt{2E(u)}+\dfrac{1}{\pi} E(u)
		\leq 2\sqrt{2}\sqrt{E(u)}.
		\]
		Summing with the energy, we conclude the result.
	\end{proof} 

	Using the above lemma, we see that \(\|m_0-p_0\|_{H^1(\mathbb T^2)}\) can be made arbitrarily small by choosing the initial energy sufficiently small. As a consequence, there exists a time \(t_0\) such that \(m(\cdot,t_0)\) is uniformly close to the constant state \(p_0\in\mathbb S^2\) by the following lemma. In particular, \(m(\cdot,t_0)\) remains in a neighborhood of \(p_0\) where the stereographic projection is well defined. Moreover, for the subsequent controllability argument, we also establish that the quantity \(\|m(\cdot,t_0)-p_0\|_{L^\infty(\mathbb T^2)}\) is sufficiently small.

    \begin{lemma}[\bf Higher order estimates]\label{lem:LLG-hemisphere-entry}
Fix a constant \(p_0\in \mathbb S^2\), \(T>0\), and \(m_0,\overline m_0\in H^1(\mathbb T^2;\mathbb S^2)\). Let $
m,\overline m:\mathbb T^2\times[0,T]\to \mathbb S^2$ be the corresponding solutions of the uncontrolled LLG equation
\eqref{eq. uncontrolled LLG} subject to the initial conditions
\(m_0\) and \(\overline m_0\), respectively. For every \(\varepsilon>0\) and \(p\in [2,\infty)\), there exists
\(\delta\in (0,4\pi^2)\), depending on
\(\varepsilon,T,\alpha,\gamma,p\), such that if $\|m_0-p_0\|_{H^1(\mathbb T^2)}\le \delta$ and $\|\overline m_0-p_0\|_{H^1(\mathbb T^2)}\le \delta$, then there exists a time \(t_0\in(0,T]\) satisfying
\[
\|m(\cdot,t_0)-p_0\|_{L^\infty(\mathbb T^2)}+
\|m(\cdot,t_0)-\overline m(\cdot,t_0)\|_{L^\infty(\mathbb T^2)}
+
\|m(\cdot,t_0)-\overline m(\cdot,t_0)\|_{W^{2,p}(\mathbb T^2)}
\le \varepsilon .
\]
\end{lemma}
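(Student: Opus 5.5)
The plan is to work in the small-energy regime of the LLG flow on $\mathbb T^2$, where global smooth solutions exist and obey quantitative parabolic smoothing. The argument then has three steps: uniform smallness of the solutions in $H^2$ and $W^{2,p}$ after a positive time, $L^\infty$ closeness to $p_0$, and a Lipschitz-type stability estimate for the difference $m-\overline m$.

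\textbf{Step 1: smoothing.} Since $\|m_0-p_0\|_{H^1}\le\delta$, we have $E(m_0)\le\delta^2/2$, and the energy identity gives $\frac{d}{dt}E(m)=-\alpha\int|\Delta m+|\nabla m|^2m|^2\le0$. Hence $E(m(t))\le\delta^2/2$ for all $t$, and also $\int_0^T\|\Delta m+|\nabla m|^2m\|_{L^2}^2\lesssim\delta^2$. Next I would use the Ladyzhenskaya/Gagliardo--Nirenberg inequality $\|\nabla m\|_{L^4}^2\lesssim\|\nabla m\|_{L^2}\|\nabla m\|_{H^1}$ together with $\|\nabla^2 m\|_{L^2}=\|\Delta m\|_{L^2}$ on the torus. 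With small energy this yields $\|\Delta m\|_{L^2}\le 2\|\Delta m+|\nabla m|^2m\|_{L^2}$. A Struwe-type differential inequality for $\|\Delta m\|_{L^2}^2$ then follows: the cubic terms are absorbed because the energy is small. In this way I would obtain $t\|\Delta m(t)\|_{L^2}^2\lesssim\delta^2$ on $(0,T]$. Bootstrapping one more derivative by the same procedure (test with $\Delta^2 m$, weight $t^2$) gives an $H^3$ bound $\lesssim \delta\, C(T)$ at, say, $t_0=T$. Since $H^3(\mathbb T^2)\hookrightarrow W^{2,p}$ for all $p<\infty$, this controls $\|\nabla m(t_0)\|_{W^{1,p}}$ by $C\delta$.

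\textbf{Step 2: $L^\infty$ closeness to $p_0$.} The mean $\overline{m}(t)$ moves slowly: $|\frac{d}{dt}\int m|\le\int|\partial_tm|\lesssim\|\Delta m+|\nabla m|^2m\|_{L^2}$, since $|\partial_t m|\le C(\alpha,\gamma)|\Delta m+|\nabla m|^2 m|$ (the precession term $m\times\Delta m$ has the same modulus as the tension projection). Integrating in time gives a bound $\le C\sqrt T\delta$. This controls $|\overline{m(t_0)}-p_0|$, while the oscillation is controlled by $\|m(t_0)-\overline{m(t_0)}\|_{L^\infty}\lesssim\|\nabla m(t_0)\|_{W^{1,p}}$ via Poincaré and Sobolev embedding ($p>2$). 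Together these give $\|m(t_0)-p_0\|_{L^\infty}\le C\delta$.

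\textbf{Step 3: stability of the difference (the main obstacle).} Set $w=m-\overline m$. It solves a linear parabolic system $\partial_t w=A$-type principal part in $w$ plus $|\nabla m|^2w$, $(\nabla m+\nabla\overline m)\cdot\nabla w\,\overline m$, and $w\times\Delta\overline m$, with coefficients bounded by the Step 1 quantities. The principal part $\alpha\Delta w+\gamma m\times\Delta w$ is strongly elliptic because the skew part drops out in $L^2$ pairings: $\int (m\times\Delta w)\cdot\Delta w=0$. The difficulty is that the coefficients are only small in time-weighted norms near $t=0$. I would therefore first run an $L^2$ Gronwall estimate, using the smallness of $\|\nabla m\|_{L^4}$, integrated in time, to get $\sup_t\|w\|_{L^2}\lesssim\|w(0)\|_{L^2}\le2\delta$. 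Then I would repeat the argument with weights $t$, $t^2$, $t^3$ on $\|\nabla w\|^2$, $\|\Delta w\|^2$, and $\|\nabla\Delta w\|^2$. This gives $\|w(t_0)\|_{H^3}\le C(T,\alpha,\gamma)\delta$, hence $W^{2,p}$ and $L^\infty$ bounds of order $\delta$. Choosing $\delta$ small enough that all three contributions are at most $\varepsilon/3$, and below the energy threshold of \Cref{lem:small-energy-const}, completes the proof with $t_0=T$. The delicate point is closing the higher-order estimates for the difference. The terms $w\times\Delta\overline m$ and $\nabla w\cdot\nabla\Delta\overline m$ require the time-weighted bounds on $\overline m$ from Step 1, which must be arranged so that the weights in $t$ match.
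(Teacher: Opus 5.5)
Your argument is correct, but your Step 3 is unnecessary, and you obtain the regularity differently from the paper.

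The lemma only asks for \emph{smallness} of $m(\cdot,t_0)-\overline m(\cdot,t_0)$, not a bound in terms of $\|m_0-\overline m_0\|$. Both solutions start $\delta$-close to the same $p_0$. Applying your Steps 1--2 to $m$ and to $\overline m$ separately gives $\|m(\cdot,T)-p_0\|_{W^{2,p}}+\|\overline m(\cdot,T)-p_0\|_{W^{2,p}}\le C(T,\alpha,\gamma)\delta$. The triangle inequality through $p_0$ then handles both difference terms. This is exactly what the paper does, so your ``main obstacle'' disappears.

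For the regularity, the paper avoids time weights. It picks $t_0\in[T/2,T]$ by averaging $\int_0^T(\|\Delta m\|_{L^2}^2+\|\Delta\overline m\|_{L^2}^2)\lesssim\delta^2/\alpha$. It gets $L^2$-closeness to $p_0$ from $\frac{d}{dt}\|m-p_0\|_{L^2}^2\le 2\alpha\int|\nabla m|^2$ plus an elliptic estimate; your mean-drift argument via $|\partial_t m|\le(\alpha+|\gamma|)\,|\Delta m+|\nabla m|^2m|$ is an equally good substitute. It then reaches $W^{2,p}$ by interpolating $\|\nabla^2 m(t_0)\|_{L^2}\lesssim\delta$ against a qualitative bound $\sup_{[T/2,T]}\|m\|_{H^3}\lesssim_{m_0}1$. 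That is shorter, but as written the constant depends on $m_0$ and $\overline m_0$. So the resulting $\delta$ is not uniform in the data, as the statement asserts.

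Your weighted route gives $\|\nabla m(\cdot,T)\|_{H^2}\le C(T,\alpha,\gamma)\delta$ with no such dependence, at the price of actually carrying out the estimates. They do close, and more simply than you expect. In two dimensions, Gagliardo--Nirenberg bounds every nonlinear contribution to $\frac{d}{dt}\|\Delta m\|_{L^2}^2$ by $C(\alpha,\gamma)\bigl(E(m)^{1/2}+E(m)\bigr)\|\nabla\Delta m\|_{L^2}^2$. It bounds the contributions to $\frac{d}{dt}\|\nabla\Delta m\|_{L^2}^2$ in the same way, with $\|\Delta^2 m\|_{L^2}^2$ in place of $\|\nabla\Delta m\|_{L^2}^2$. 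Hence, for small energy, both quantities are nonincreasing in $t$. This gives $\|\Delta m(\cdot,t)\|_{L^2}^2\le t^{-1}\int_0^t\|\Delta m\|_{L^2}^2$ and, one level up, $\|\nabla\Delta m(\cdot,T)\|_{L^2}^2\lesssim_{\alpha}\delta^2T^{-2}$, without explicit weights.
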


	\begin{proof}
		We first choose $\delta$ small enough, depending on $\alpha$ and $\gamma$, such that $m$ and $\overline{m}$ are smooth solutions. We consider the solution $m$ first.

	\noindent{\bf Step 1. Energy dissipation and bound of $\int_0^T\|\Delta m\|_{L^2(\mathbb{T}^2)}^2$:}
	Let
	\[
	\tau(m):=\Delta m+|\nabla m|^2m.
	\]
	Since $|m|=1$, we have $m\cdot \Delta m=-|\nabla m|^2$, hence
	\[
	\frac{d}{dt}E(m)
	=-\int_{\mathbb{T}^2}\Delta m\cdot \partial_t m
	=-\alpha\int_{\mathbb{T}^2}\Delta m\cdot \tau(m)
	=-\alpha\int_{\mathbb{T}^2}|\tau(m)|^2.
	\]
	Therefore
\begin{align}\label{1756}
		E(m(\cdot,t))+\alpha\int_0^t\|\tau(m)\|_{L^2(\mathbb{T}^2)}^2
	=E(m_0)\leq \delta^2/2 \h{5pt} \text{for any $t\in[0,T]$.}
\end{align}
Hence, by the Gagliardo--Nirenberg inequality and the identity
	$\|\nabla^2m\|_{L^2(\mathbb{T}^2)}=\|\Delta m\|_{L^2(\mathbb{T}^2)}$, we obtain
	\[
	\|\nabla m\|_{L^4(\mathbb{T}^2)}^4
	\lesssim \|\nabla m\|_{L^2(\mathbb{T}^2)}^2\|\Delta m\|_{L^2(\mathbb{T}^2)}^2
	=2E(m)\|\Delta m\|_{L^2(\mathbb{T}^2)}^2
	\le \delta^2\|\Delta m\|_{L^2(\mathbb{T}^2)}^2.
	\]
Choosing $\delta$ small enough, we may absorb this term and obtain
	\[
	\|\tau(m)\|_{L^2(\mathbb{T}^2)}^2
	= \|\Delta m\|_{L^2(\mathbb{T}^2)}^2-\|\nabla m\|_{L^4(\mathbb{T}^2)}^4
	\ge \frac12\|\Delta m\|_{L^2(\mathbb{T}^2)}^2.
	\]
	Hence, \eqref{1756} implies
	\[
	\int_0^T\|\Delta m\|_{L^2(\mathbb{T}^2)}^2
	\le \frac{\delta^2}{\alpha}.
	\]
	The same estimate holds for $\overline{m}$.
	
	\noindent{\bf Step 2: Time $t_0$ with small $H^2$-distance from $p_0$.}
	By averaging over $[T/2,T]$, there exists $t_0\in[T/2,T]$ such that
	\[
	\|\Delta m(\cdot,t_0)\|_{L^2(\mathbb{T}^2)}^2+\|\Delta \overline{m}(\cdot,t_0)\|_{L^2(\mathbb{T}^2)}^2
	\le \frac{2}{T}\int_0^T\|\Delta m\|_{L^2(\mathbb{T}^2)}^2+\|\Delta \overline{m}\|_{L^2(\mathbb{T}^2)}^2 
	\le  \dfrac{4\delta^2}{\alpha T}.
	\]
	Next,
	\[
	\frac12\frac{d}{dt}\|m-p_0\|_{L^2(\mathbb{T}^2)}^2
	=\int_{\mathbb{T}^2}(m-p_0)\cdot \partial_t m.
	\]
	Using the uncontrolled LLG equation \eqref{eq. uncontrolled LLG} on the right-hand side, we have
	\[
	\alpha\int_{\mathbb{T}^2}(m-p_0)\cdot \Delta m
	=-\alpha\int_{\mathbb{T}^2}|\nabla m|^2,
	\]
	while
	\[
	\alpha\int_{\mathbb{T}^2}(m-p_0)\cdot\big(|\nabla m|^2m\big)
	=\alpha\int_{\mathbb{T}^2}|\nabla m|^2(1-p_0\cdot m).
	\]
	The contribution of the precession term vanishes after integration by parts:
	\[
	\int_{\mathbb{T}^2}(m-p_0)\cdot (m\times \Delta m)
	= -\int_{\mathbb{T}^2} p_0\cdot (m\times \Delta m)
	= -\int_{\mathbb{T}^2} \Delta m\cdot (p_0\times m).
	\]
	Since $p_0$ is constant, integrating by parts gives
	\[
	-\int_{\mathbb{T}^2} \Delta m\cdot (p_0\times m)
	= \sum_{j=1}^2 \int_{\mathbb{T}^2} \partial_j m\cdot (p_0\times \partial_j m)
	=0.
	\]
	Therefore, $\int_{\mathbb{T}^2}(m-p_0)\cdot (m\times \Delta m)=0$ and 
	\[
	\frac12\frac{d}{dt}\|m-p_0\|_{L^2(\mathbb{T}^2)}^2
	\le \alpha\int_{\mathbb{T}^2}|\nabla m|^2 
	\le 2\alpha E(m_0)
	\le \alpha\delta^2.
	\]
	Integrating in time and using the assumption that $\|m_0-p_0\|_{L^2}\le \delta$, we get
	\[
	\sup_{t\in[0,T]}\|m(\cdot,t)-p_0\|_{L^2(\mathbb{T}^2)}^2
	\le \delta^2+2\alpha T\delta^2.
	\]
	The elliptic estimate on $\mathbb{T}^2$ now yields
	\[
	\|m(\cdot,t_0)-p_0\|_{H^2(\mathbb{T}^2)}
	\lesssim \|\Delta m(\cdot,t_0)\|_{L^2(\mathbb{T}^2)}+\|m(\cdot,t_0)-p_0\|_{L^2(\mathbb{T}^2)}
	\lesssim_{\,\alpha,T}\delta.
	\]
	In particular, since $H^2(\mathbb{T}^2)\hookrightarrow L^\infty(\mathbb{T}^2)$ continuously,
	\[
	\|m(\cdot,t_0)-p_0\|_{L^\infty(\mathbb{T}^2)}
	\lesssim_{\,\alpha,T}\delta.
	\]
    The same inequality holds for $\overline{m}_0$, then triangle inequality implies that
    	\[
	\|m(\cdot,t_0)-\overline{m}(\cdot,t_0)\|_{L^\infty(\mathbb{T}^2)}
	\lesssim_{\,\alpha,T}\delta.
	\]
	
	\noindent{\bf Step 3. $W^{2,p}$ bound:}
	As the solution $m$ is smooth, then 
	\[
	\sup_{t\in[T/2,T]}\|m\|_{H^3(\mathbb{T}^2)}\lesssim_{\,\alpha,\gamma,m_0,T} 1.
	\]
	Hence, by Gagliardo--Nirenberg interpolation,
	\[
	\|\nabla^2 m(\cdot,t_0)\|_{L^p(\mathbb{T}^2)}
	\lesssim_{\,p}
	\|\nabla^3 m(\cdot,t_0)\|_{L^2(\mathbb{T}^2)}^{1-\frac2p}
	\|\nabla^2 m(\cdot,t_0)\|_{L^2(\mathbb{T}^2)}^{\frac2p}
	\lesssim_{\,\alpha,\gamma,T,p\,}\delta^{\frac2p},
	\]
	for any $p\ge 2$. Since $H^2(\mathbb{T}^2)\hookrightarrow W^{1,p}(\mathbb{T}^2)$ continuously,
	\[
	\|m(\cdot,t_0)-p_0\|_{W^{2,p}(\mathbb{T}^2)}
	\lesssim_{\,\alpha,\gamma,T,p\,} \big(\delta+\delta^{\frac2p}\big).
	\]
    The same inequality holds for $\overline{m}_0$. Choosing $\delta>0$ small enough, depending on $\alpha,\gamma,T,p,\varepsilon$, this proves the lemma.
	\end{proof}

	\section{The Carleman estimate for Linear Systems of Parabolic Equations}

	Based on the estimates in \Cref{sec Uniform distance estimate from a point on the sphere}, we safely pass to stereographic coordinates and study the corresponding equation in the plane, taking the projected state associated with \(m(\cdot,t_0)\) from \Cref{lem:LLG-hemisphere-entry} as the initial datum. This transformation allows us to replace the geometric constraint \(|m|=1\) by a quasilinear parabolic system. The null controllability of the projected equation will be obtained by a fixed-point argument, and the key ingredient for this step is the null controllability of a suitable linearised system. \smallskip

   The main goal of this section is therefore to establish a Carleman estimate for a class of linear parabolic systems that includes the linearisation of the projected LLG equation. However, the principal part of the system involves a non-diagonalizable (over \(\mathbb R\)) matrix. Consequently, the standard Carleman estimates for scalar parabolic equations cannot be applied or modified directly, and a separate analysis is needed.\smallskip

	Let the stereographic projection from $\mathbb{R}^2$ to $\mathbb{S}^2 \setminus \{-e_3\}$ and its inverse be
	\begin{align} 
		\Psi_0(v_1,v_2)=\left (\frac{2v_1}{1+|v|^2},\frac{2v_2}{1+|v|^2},\frac{1-|v|^2}{1+|v|^2}\right)^\top,
		\h{5pt} \text{and} \h{5pt}
		v=\Psi^{-1}_0(m)=\frac{(m_1,m_2)^\top}{1+m_3}
		\label{def. stereographic projection and inverse}
	\end{align} respectively. If $m$ is a solution to the controlled equation \eqref{eq. controlled LLG} subject to control $u$, then the transformed equation that $v=\Psi_0^{-1}(m)$ satisfies is
	\begin{align}
		\partial_t v=(\alpha I+\gamma J)
		\left\{\Delta v-2\,(\nabla v)\,\nabla\log h(v)+\frac{2|\nabla v|^2}{h(v)}\,v
		+\chi_\omega\frac{[h(v)]^2}{4}\,(\nabla_v\Psi_0(v))^{ \top}u\right\}
		\label{eq. projected eq}
	\end{align}
	where $J:=\begin{pmatrix}0&-1\\[2pt]1&0\end{pmatrix}$, $h(v):=1+|v|^2$ and $\nabla\log h(v)$ means $\dfrac{1}{h(v)}(\nabla v)^\top\nabla_v h(v)$. The computation is given in the Appendix \ref{app. Derivation of projected equation under stereographic coordinates}. We then linearise the above equation. Denote $A:=\alpha I+\gamma J$. Let $ f$ and $z$ be some functions, we consider the following version of linearisation of \eqref{eq. projected eq}:
	\begin{align}\label{eq. projected and linearised eq}
		\partial_t v=A
		\left[\Delta v-2\,(\nabla v)\,\nabla\log h(z)
		+\frac{2|\nabla z|^2}{h(z)}\,v\right] 
		+\chi_\omega f
		= A\pig[\Delta v+(B(z)\cdot\nabla) v
		+c(z)v\pig]
		+\chi_\omega f,
	\end{align}
	where $B(z):=-2\nabla\log h(z)$ and $c(z):=\dfrac{2|\nabla z|^2}{h(z)}$. \smallskip

	Null controllability results for scalar linear equations are available in, for instance, \cite[Proposition~1]{FernándezCara06} and \cite[Theorem~2]{fernandez2006null}. However, those results concern scalar equations and apply to \eqref{eq. projected and linearised eq} only in the special case \(\gamma=0\), namely the harmonic map heat flow. In our setting, the principal part involves the matrix \(A\), which is non-symmetric as soon as \(\gamma\neq 0\). For this reason, the arguments developed in the scalar case cannot be transferred directly to our system, nor can they be modified in a simple way to cover the present case. This non-symmetry is the main source of difficulty in establishing the Carleman estimate for the principal adjoint system in \Cref{lem Carleman est principal ad eq}; see the discussion following that lemma. We therefore establish a new null controllability result for a general class of linear parabolic systems that includes \eqref{eq. projected and linearised eq} as a particular case.
	
	\begin{proposition}[\bf Null controllability of linear systems]\label{thm null control of linear eq}\sloppy Let $n\in \mathbb{N}$ and assume that $B_i,\,\Gamma \in L^\infty(\mathbb{T}^n\times(0,T);\mathbb{R}^{2\times 2})$ for all $i=1,2,\dots,n$. For any $T>0$,  non-empty open set $\omega\subseteq \mathbb T^n$ and initial datum $u_0\in L^2(\mathbb T^n;\mathbb R^2)$, there exists a control
		$f\in L^\infty\big(\omega\times(0,T);\mathbb R^2\big)$ such that the weak solution
		$u\in C\big([0,T];L^2(\mathbb T^n;\mathbb R^2)\big)\cap L^2\big(0,T;H^1(\mathbb T^n;\mathbb R^2)\big)$
		to the linear equation 
\begin{align}\label{eq. general linear eq.}
		\p_t u(x,t)\;- A\,\Delta u(x,t)
		\;-\sum_{i=1}^n\;B_i(x,t) \p_i u(x,t)
		\;-\;\Gamma(x,t)\,u(x,t)
		\;=\; \chi_\omega f(x,t),
\end{align}
		subject to $u(\cdot,0)=u_0$, satisfies
		\[
		u(\cdot,T)=0\quad\text{a.e. in }\mathbb T^n.
		\]
		Moreover, there exists a constant $\C{4}=\C{4} \left(T,\omega,\alpha,\gamma,\|B\|_{\infty},\|\Gamma\|_{\infty}\right)>0$
		such that one can choose $f$ satisfying
		\[
		\|f\|_{L^\infty(\omega\times(0,T))} \le \C{4}\,\|u_0\|_{L^2(\mathbb T^n)},
		\]
        where the constant $\C{4}$ has the form in \eqref{5129}.
        \label{thm null control of linear eq.}
	\end{proposition}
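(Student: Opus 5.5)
The plan is the standard duality route: a Carleman estimate for the adjoint system, then observability, then an $L^\infty$ control via a penalized or HUM-type construction. The adjoint of \eqref{eq. general linear eq.} is
\[
-\partial_t \varphi - A^\top\Delta\varphi + \sum_{i=1}^n \partial_i\big(B_i^\top\varphi\big) - \Gamma^\top\varphi = 0,\qquad \varphi(\cdot,T)=\varphi_T .
\]
Here $A^\top=\alpha I-\gamma J$, and the first-order terms are in divergence form with only $L^\infty$ coefficients.

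\textbf{Step 1 (Carleman estimate).} I would fix $\omega_0\subset\subset\omega$ and a Fursikov--Imanuvilov weight $\psi\in C^\infty(\mathbb T^n)$. On the torus there is no boundary, so $\psi$ only needs $|\nabla\psi|>0$ off $\omega_0$. I would set
\[
\phi=\frac{e^{\lambda\psi}}{t(T-t)},\qquad \eta=\frac{e^{\lambda\psi}-e^{2\lambda\|\psi\|_\infty}}{t(T-t)} ,
\]
and prove
\[
\int_Q e^{2s\eta}\big(s^3\lambda^4\phi^3|\varphi|^2+s\lambda^2\phi|\nabla\varphi|^2\big)\le C\int_{\omega\times(0,T)} e^{2s\eta}s^3\lambda^4\phi^3|\varphi|^2 .
\]
For the principal operator $-\partial_t-A^\top\Delta$, I would conjugate with $w=e^{s\eta}\varphi$. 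The key structural fact is that $J$ is skew and commutes with $\Delta$. I would therefore split the conjugated operator as $P_1+P_2$, where $P_1$ is the formally symmetric part (the $\alpha$-parts of $\Delta w+s^2\lambda^2\phi^2|\nabla\psi|^2w$) and $P_2$ is the skew part (the $\alpha$ drift terms, $\partial_t w$, and the whole $\gamma J$ contribution, which is skew because $J^\top=-J$). This is the decomposition advertised in the introduction. Expanding $\|P_1w\|^2+\|P_2w\|^2+2(P_1w,P_2w)$, the cross terms involving $J$ pair $(Jw,w)$-type quantities, which vanish, or give commutators of lower order in $s$. The $\alpha$-terms reproduce the scalar positive terms, with $\alpha>0$ essential. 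The divergence lower-order terms $\partial_i(B_i^\top\varphi)$ would be handled by the Imanuvilov--Yamamoto $H^{-1}$-type Carleman argument, or more simply by an $L^2$ estimate with source in divergence form. This costs $s^2\phi^2\int e^{2s\eta}|B\varphi|^2$, which is absorbed for $s\ge C(T+T^2+T^2\|B\|_\infty^{2/3}+T^2\|\Gamma\|_\infty^{2/3})$. The gradient term on $\omega_0$ would be removed by a cutoff in $\omega$. I expect this step to be the main obstacle, specifically checking that every $\gamma$-cross term is either exactly zero by skew-symmetry or of strictly lower order in $(s,\lambda)$.

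\textbf{Step 2 (observability with weights).} From the Carleman estimate plus energy estimates for the backward adjoint on $(0,T/2)$, I would derive
\[
\|\varphi(0)\|_{L^2}\le C\|e^{s\eta}\phi^{3/2}\varphi\|_{L^2(\omega\times(0,T))}.
\]
To get an $L^\infty$ control, I would need observability in $L^1(\omega\times(0,T))$. I would obtain this by a parabolic bootstrap as in \cite{fernandez2006null}. Local $L^p$/regularizing estimates for the adjoint on nested time strips upgrade $L^2$ weighted norms to $L^\infty$, and a duality/interpolation chain then gives $\|\varphi(0)\|_{L^2}\le \C{4}\|\varphi\|_{L^1(\omega\times(0,T))}$. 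Because the coefficients are only $L^\infty$ and the drift is in divergence form, I would apply the regularity iteration to the adjoint in divergence form, using De Giorgi--Nash--Moser-type or $L^p$ energy estimates valid for the system with principal matrix $A^\top$ (coercive since $\alpha>0$).

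\textbf{Step 3 (control).} I would minimize
\[
J_\varepsilon(\varphi_T)=\tfrac12\|\varphi\|_{L^1(\omega\times(0,T))}^2+\varepsilon\|\varphi_T\|_{L^2}+(u_0,\varphi(0))
\]
over $\varphi_T$. Coercivity follows from the $L^1$ observability. Its Euler--Lagrange condition yields $f_\varepsilon=\|\varphi_\varepsilon\|_{L^1}\operatorname{sgn}\varphi_\varepsilon$ with $\|f_\varepsilon\|_\infty\le \C{4}\|u_0\|_{L^2}$ and $\|u_\varepsilon(T)\|\le\varepsilon$. Weak-$*$ compactness as $\varepsilon\to0$ then gives $f$ with $u(T)=0$. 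Tracking the constants gives the explicit form $\C{4}=\exp\big(C(1+1/T+T\|\Gamma\|_\infty+\|\Gamma\|_\infty^{2/3}+\|B\|_\infty^2)\big)$.
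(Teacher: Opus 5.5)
Your overall architecture (Carleman estimate, observability, $L^\infty$ control) is sound. However, the decomposition in Step 1, on which everything rests, is not a symmetric/skew splitting, and the check you flag as ``the main obstacle'' actually fails for it. Write
\[
e^{s\eta}\bigl(-\partial_t-A^\top\Delta\bigr)\bigl(e^{-s\eta}w\bigr)=-\partial_tw+s(\partial_t\eta)w-(\alpha I-\gamma J)(\mathcal A+\mathcal B)w,
\]
where $\mathcal Aw=\Delta w+s^2\lambda^2\phi^2|\nabla\psi|^2w$ is symmetric and $\mathcal Bw=-2s\lambda\phi\,\nabla\psi\cdot\nabla w-s(\Delta\eta)w$ is skew. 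Since $J$ commutes with these scalar operators and $J^\top=-J$, the operator $J\mathcal A$ is skew. But $(J\mathcal B)^*=\mathcal B^*J^\top=(-\mathcal B)(-J)=J\mathcal B$ is \emph{symmetric}. Putting the whole $\gamma J$ contribution into $P_2$ therefore destroys the identity $2(P_1w,P_2w)=([P_1,P_2]w,w)$, and the cross term acquires
\[
-2\alpha\gamma\,(\mathcal Aw,J\mathcal Bw)=-\alpha\gamma\,\bigl(J(\mathcal A\mathcal B+\mathcal B\mathcal A)w,w\bigr).
\]
This is an anticommutator, i.e.\ a third-order form that is neither zero nor of lower order. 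It contains multiples of $s^3\lambda^3\int_Q\phi^3|\nabla\psi|^2\,w\cdot J(\nabla\psi\cdot\nabla w)$ and, after one integration by parts, of $s\lambda\int_Q\phi\,\partial_j\psi\,\partial_kw\cdot J\partial_j\partial_kw$.

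Neither integrand is a derivative; they are currents such as $w_2\partial_jw_1-w_1\partial_jw_2$. Each weight exceeds by a factor $s\phi$ the geometric mean of the positive quantities at your disposal ($s^3\lambda^4\phi^3|w|^2$, $s\lambda^2\phi|\nabla w|^2$, $(s\phi)^{-1}|\nabla^2w|^2$), so no Young inequality absorbs them. These are exactly the mixed imaginary terms the paper warns about.

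The missing idea is to move $\gamma J\mathcal B$ into the symmetric part: take $P_1=-\alpha\mathcal A+\gamma J\mathcal B+s\partial_t\eta$ and $P_2=-\partial_t-\alpha\mathcal B+\gamma J\mathcal A$. Then, because $J^2=-I$,
\[
[-\alpha\mathcal A+\gamma J\mathcal B,\;-\alpha\mathcal B+\gamma J\mathcal A]=(\alpha^2+\gamma^2)[\mathcal A,\mathcal B].
\]
The only remaining $\gamma$-dependent terms come from $[\gamma J\mathcal B,-\partial_t]$ and $[s\partial_t\eta,\gamma J\mathcal A]$. Both are multiples of $\gamma J\bigl(2s\nabla\partial_t\eta\cdot\nabla+s\Delta\partial_t\eta\bigr)$, a genuine first-order perturbation. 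This is the paper's split into $\mathcal S_\beta$ (which contains $-i\gamma\mathcal B_0$) and $\mathcal K_\beta$ (which contains $-i\gamma\mathcal A_0$), leading to \eqref{378}.

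With Step 1 repaired, the rest of your plan is a legitimate alternative to the paper's route. You reach $L^\infty$ controls via $L^1(\omega\times(0,T))$-observability and a penalized $L^1$ HUM functional (the Fern\'andez-Cara--Zuazua route). The paper instead takes the $L^2$ HUM control from $L^2$ observability and upgrades it using a time cutoff of the free solution, a spatial cutoff of the controlled-minus-free solution, and interior $W^{1,\infty}$ regularity of the forward system on the annulus where the cutoff derivatives live. It therefore needs only forward interior estimates, not local smoothing of the divergence-form adjoint.

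Two corrections for your version. First, De Giorgi--Nash--Moser is not available for systems, so the bootstrap must rest on $L^p$ (Calder\'on--Zygmund) estimates. These do apply, because $A^\top\Delta$ has constant coefficients and $\sigma(A^\top)=\{\alpha\pm i\gamma\}$ lies in the open right half-plane. Second, absorbing a divergence-form drift that costs $s^2\phi^2|B\varphi|^2$ requires $s\gtrsim T^2\|B\|_\infty^2$, not $T^2\|B\|_\infty^{2/3}$. Your final constant, with $\|B\|_\infty^2$ in the exponent, already reflects the correct threshold.
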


    \begin{remark}\label{rmk existence of solution to general linear eq.}
    Although \eqref{eq. general linear eq.} is a coupled system, the existence of a weak solution
		$u\in C\big([0,T];L^2(\mathbb T^n;\mathbb R^2)\big)\cap L^2\big(0,T;H^1(\mathbb T^n;\mathbb R^2)\big)$ is standard by Galerkin approximation as $(Ap)\cdot p=\alpha|p|^2$ for any $p\in \mathbb{R}^2$ and $\alpha>0$. 
    \end{remark}
	
	The idea of proof of \Cref{thm null control of linear eq} follows the classical observability–duality strategy. We study the backward adjoint equation and proves that every adjoint solution is observable from the control region $\omega$, more precisely, we can estimate the $L^2$ size of the adjoint solution at time $0$ by looking only inside $\omega$ during the time interval $(0,T)$. Thus the restriction of the adjoint solution to the control region $\omega$ contains enough information to recover the size on the whole domain at time $0$. This observability relies on the Carleman estimate.\smallskip

	Once observability is established, null controllability for $L^2$ control follows by a standard duality argument: the control is defined as the Riesz representative of the functional induced by the initial datum on the restrictions to $\omega \times (0,T)$ of adjoint solutions. Observability says ensures that this functional can be determined continuously from what that adjoint trajectory does only inside $\omega$. Finally, the $L^\infty$ control is obtained by the arguments from \cite[Proposition 1]{FernándezCara06}. It combines the $L^2$ control with suitable time and space cut-off and using interior parabolic regularity on the resulting localised forcing terms. 
	
	\subsection{Principal adjoint system}\label{sec. Principal adjoint system}
	In this section, we first establish the Carleman estimate of the principal adjoint system \eqref{eq:backward}. Then we can deduce the Carleman estimate of the general adjoint system in \Cref{lem carleman-div}. For linear scalar equations, there are Carleman estimates in \cite[Lemma 1]{fernandez2006null} and \cite{fursikov1996imanuvilov}. However, the results there cannot be simply modified to our case. We will discuss more after stating some necessary notations.\smallskip
	
	Denote $Q:=\mathbb T^n \times (0,T)$. Let $\lambda\geq 1$ and $\eta_0$ be a scalar function on $\mathbb{T}^n$. Define the usual Carleman weights on $Q=\mathbb T^n\times(0,T)$:
	\begin{equation}\label{eq:weights}
		\xi(x,t)=\frac{e^{\lambda\eta_0(x)}}{t(T-t)},\qquad
		\varphi(x,t)=\frac{e^{2\lambda\|\eta_0\|_\infty}-e^{\lambda\eta_0(x)}}{t(T-t)}.
	\end{equation} 
	
		\begin{lemma}[\bf Carleman estimate for principal adjoint system]\label{lem Carleman est principal ad eq}
		Let $T>0$ and $\omega\subseteq \mathbb T^n$ be a non-empty open set.
		Fix an open set $\omega_0 \subset\subset \omega$ (meaning $\overline{\omega_0} \subset \omega$) and $\eta_0\in C^4(\mathbb T^n;\mathbb{R}_+)$ such that\footnote{the existence of this $\eta_0$ can be ensured by \cite{fursikov1996imanuvilov}}
		\begin{equation}\label{eq:eta0}
			\eta_0 >0 \h{10pt} \text{ in $ \mathbb T^n$ and} \h{10pt} 
			|\nabla \eta_0|>0 \h{5pt}\text{ in $\mathbb T^n\setminus \omega_0$}.
		\end{equation} 
		\sloppy Let $F\in L^2(Q;\mathbb R^2)$ and $q_T\in L^2(\mathbb T^n;\mathbb R^2)$, we denote
		$q\in L^2(0,T;H^1(\mathbb T^n;\mathbb R^2))\cap C([0,T];L^2(\mathbb T^n;\mathbb R^2))$ the corresponding weak solution of
		\begin{equation}\label{eq:backward}
			\begin{cases}
				-\p_t u - A^\top \Delta u = F & \text{in }\mathbb T^n \times (0,T);\\
				u(\cdot,T)=q_T & \text{in }\mathbb T^n.
			\end{cases}
		\end{equation}
		\sloppy Then there exist constants $\lambda_\ast\ge 1$, $\sigma_\ast\geq1$ and $\C{1}>0$ depending only on
		$\alpha,\eta_0,\gamma$ such that for every $
		\lambda\ge\lambda_\ast$ and $ s\ge \sigma_\ast\big(e^{2\lambda\|\eta_0\|_\infty}T+T^2\big)$, the solution $q$ satisfies  
		\begin{equation}\label{eq:carleman-complex-system}
			\begin{aligned}
				&\int_{\mathbb T^n \times (0,T)}  
				e^{-2s\varphi} \Big[
				(s\xi)^{-1}\bigl(|\p_t q |^2+|\Delta q|^2\bigr) 
				+s^3\lambda^4\xi^3\,|q|^2\Big]
				\\
				&\le
				\C{1}\left(
				\int_{\mathbb T^n \times (0,T)} e^{-2s\varphi}|F|^2 
				+ 
				\int_{\omega\times(0,T)} e^{-2s\varphi} s^3\lambda^4\xi^3 |q|^2 
				\right).
			\end{aligned}
		\end{equation}
		The corresponding $\xi$ and $\varphi$ are defined in \eqref{eq:weights}.
	\end{lemma}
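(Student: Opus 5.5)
We follow the classical Fursikov--Imanuvilov scheme, adapting the splitting of the conjugated operator to the structure $A^\top=\alpha I-\gamma J$. By density we may assume $q_T$ and $F$ smooth, so that $q$ is smooth on $\mathbb T^n\times[0,T)$. We set $w:=e^{-s\varphi}q$. Since $\varphi\to+\infty$ as $t\to0^+$ or $t\to T^-$, $w$ and all its derivatives vanish at $t=0,T$. Conjugation gives
\[
P_\varphi w:=e^{-s\varphi}\bigl(-\partial_t-A^\top\Delta\bigr)(e^{s\varphi}w)=e^{-s\varphi}F .
\]
We expand $P_\varphi w=-\partial_t w-s(\partial_t\varphi)w-A^\top\bigl(\Delta w+2s\nabla\varphi\cdot\nabla w+s\Delta\varphi\, w+s^2|\nabla\varphi|^2w\bigr)$. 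Because $J$ is skew, $-\gamma J\Delta$ is formally self-adjoint on $L^2(\mathbb T^n;\mathbb R^2)$, while $\gamma J$ times the first-order term $2s\nabla\varphi\cdot\nabla$ is formally skew-adjoint, and similarly for the zeroth-order terms. We therefore group the terms by the symmetry of the operators they define, not only by their order in $s$.
\[
P_1 w:=-\alpha\Delta w-\alpha s^2|\nabla\varphi|^2 w+\gamma J\bigl(2s\nabla\varphi\cdot\nabla w+s\Delta\varphi\,w\bigr)-s\partial_t\varphi\, w,
\]
\[
P_2 w:=-\partial_t w-2\alpha s\nabla\varphi\cdot\nabla w-\alpha s\Delta\varphi\, w+\gamma J\bigl(\Delta w+s^2|\nabla\varphi|^2w\bigr).
\]
Then $P_1$ is formally symmetric and $P_2$ formally skew-symmetric, up to an $O(s\lambda^2\xi)w$ term, which we move to the right-hand side $G$. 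The identity $\|P_1w\|^2+\|P_2w\|^2+2(P_1w,P_2w)=\|G\|^2$ reduces the estimate to computing $2(P_1w,P_2w)=([P_1,P_2]w,w)$.

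\textbf{Step 1: the pure-$\alpha$ terms.} The cross terms in which both factors carry $\alpha$ are exactly those of the scalar heat Carleman estimate, multiplied by $\alpha^2$. As in \cite{fernandez2006null,fursikov1996imanuvilov}, they yield the positive terms
\[
\alpha^2\Bigl(s^3\lambda^4\int\xi^3|\nabla\eta_0|^4|w|^2+s\lambda^2\int\xi|\nabla\eta_0|^2|\nabla w|^2\Bigr),
\]
modulo lower-order errors. The errors containing $\partial_t\varphi$ are bounded using $|\partial_t\varphi|\lesssim T\xi^2$ together with the condition $s\ge\sigma_\ast(e^{2\lambda\|\eta_0\|_\infty}T+T^2)$.

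\textbf{Step 2: the mixed and pure-$\gamma$ terms.} This is the main obstacle. Products of a $\gamma J$ term with an $\alpha$ term, or of two $\gamma J$ terms, could a priori produce top-order contributions of size $s^3\lambda^3\xi^3$ or $s\lambda\xi|\nabla w|^2$ with no sign. We exploit $J^\top=-J$, $J^2=-I$, and $(Jv)\cdot v=0$. The product of $\gamma J(2s\nabla\varphi\cdot\nabla w)$ with $\gamma J(s^2|\nabla\varphi|^2w)$ is a pairing of $J$ with itself, so it reduces to the scalar computation for $\gamma^2\bigl(2s\nabla\varphi\cdot\nabla w,\ s^2|\nabla\varphi|^2w\bigr)$, which yields a further positive contribution $\gamma^2 s^3\lambda^4\xi^3|\nabla\eta_0|^4|w|^2$. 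Similarly, $\gamma J\cdot2s\nabla\varphi\cdot\nabla w$ paired with $\gamma J\Delta w$ gives a further positive $\gamma^2 s\lambda^2\xi|\nabla w|^2$ term, again with the same sign as in the scalar case. The genuinely mixed products, in which $\alpha$ multiplies $\gamma J$, have the form $(Jv_1,v_2)$ with $v_1,v_2$ built from the same quantities. After integration by parts they become either $\int (J\nabla w)\cdot\nabla w$-type or $\int(Jw)\cdot w$-type expressions, which vanish identically, or lower-order commutators involving derivatives of $\nabla\varphi$. Each such commutator carries at most one fewer power of $\lambda$ or $s$, for example $s^3\lambda^3\xi^3|w|^2$ or $s\lambda\xi|\nabla w|^2$. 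These are absorbed for $\lambda\ge\lambda_\ast$ and $s$ large. We expect carrying out this bookkeeping cleanly, by checking that every mixed top-order term cancels by skew-symmetry, to be the delicate part.

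\textbf{Step 3: conclusion.} In $\mathbb T^n\setminus\omega_0$ we use $|\nabla\eta_0|\ge c>0$. On $\omega_0$ we move the weighted gradient and zeroth-order terms to the right-hand side. The local gradient term is removed with a cutoff $\zeta\in C_c^\infty(\omega)$ equal to $1$ on $\omega_0$: we test the equation by $\zeta s\lambda^2\xi e^{-2s\varphi}q$ and use Young's inequality. This leaves only $\int_{\omega\times(0,T)}s^3\lambda^4\xi^3e^{-2s\varphi}|q|^2$ on the right. Returning from $w$ to $q$ gives the $s^3\lambda^4\xi^3|q|^2$ term. Finally, the bounds on $(s\xi)^{-1}(|\partial_tq|^2+|\Delta q|^2)$ follow from $\|P_1w\|^2+\|P_2w\|^2\lesssim\|G\|^2$, after weighting by $(s\xi)^{-1}$ and isolating $\partial_t w$ and $\Delta w$; the invertibility of $A^\top$ lets us recover $\Delta w$ from the combination appearing in $P_1,P_2$. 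The commutator terms are once more absorbed by the choice of $s$ and $\lambda$.
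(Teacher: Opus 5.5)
Your splitting is the same as the paper's. In real form, $P_1=\alpha\mathcal A_0-\gamma J\mathcal B_0-s\partial_t\varphi$ and $P_2=-\partial_t+\alpha\mathcal B_0-\gamma J\mathcal A_0$, where $\mathcal A_0=-\Delta-s^2|\nabla\varphi|^2$ and $\mathcal B_0=-2s\nabla\varphi\cdot\nabla-s\Delta\varphi$. These are $\mathcal S_\beta,\mathcal K_\beta$ with $J$ in place of $i$. Three smaller points:
\begin{itemize}
\item Your sentence about which $J$-terms are symmetric is reversed: $J\Delta$ is skew and $J\mathcal B_0$ is symmetric. Your display nevertheless places them correctly.
\item As displayed, $P_1$ and $P_2$ are exactly symmetric and skew, so nothing needs to be moved into $G$.
\item Step 2 is not delicate. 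Since $J^\top J=I$ and $(Jv)\cdot v=0$ pointwise, the spatial part of $([P_1,P_2]w,w)$ is exactly $(\alpha^2+\gamma^2)([\mathcal A_0,\mathcal B_0]w,w)$. The only surviving $\gamma$-term is the lower-order $2\gamma sJ[\partial_t\varphi,\mathcal A_0]$.
\end{itemize}

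The genuine gap is in Step 1, and Step 3 rests on it. For this exact split, the scalar computation does not produce $s\lambda^2\int_Q\xi|\nabla\eta_0|^2|\nabla w|^2$. Write $\mathcal B_0=b\cdot\nabla+\tfrac12\operatorname{div}b$ with $b=2s\lambda\xi\nabla\eta_0$. Then the gradient part of $([\mathcal A_0,\mathcal B_0]w,w)$ is
\[
2\int_Q\partial_jb_k\,\partial_kw\cdot\partial_jw=4s\lambda^2\int_Q\xi|\nabla\eta_0\cdot\nabla w|^2+4s\lambda\int_Q\xi\,\partial_{jk}\eta_0\,\partial_jw\cdot\partial_kw .
\]
Only the derivative along $\nabla\eta_0$ has a sign; the second term involves the full gradient and has no sign. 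In \cite{fernandez2006null,fursikov1996imanuvilov}, the full-gradient positive term comes from deliberately putting $2s\lambda^2\xi|\nabla\eta_0|^2w$ into the second operator, which then is no longer skew. This has three consequences for your plan:
\begin{itemize}
\item The $O(s\lambda\xi|\nabla w|^2)$ error cannot be absorbed by taking $\lambda$ large, as you propose.
\item The cutoff argument in Step 3 has no positive gradient term outside $\omega_0$ to work with.
\item Recovering $(s\xi)^{-1}|\Delta q|^2$ from $e^{-s\varphi}\Delta q=-(\mathcal A_0w+\mathcal B_0w)$ needs a bound on $s\lambda^2\int_Q\xi|\nabla w|^2$ that you do not have.
\end{itemize}
If you instead meant the classical modified split, the $\alpha\gamma$ terms no longer cancel. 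A term of the form $\alpha\gamma s^2\lambda^3\int_Q\xi^2|\nabla\eta_0|^2\,(J(\nabla\eta_0\cdot\nabla w))\cdot w$ survives. It is not a total derivative and has exactly critical order, namely the geometric mean of the two main terms. This contradicts your Step 2 claim, and the term would need a separate quadratic-form argument.

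The missing ingredient, which the paper supplies, is to control the gradient globally through the Laplacian. Integrating by parts gives
\[
s\lambda^2\int_Q\xi|\nabla w|^2\le\varepsilon\int_Q(s\xi)^{-1}|\Delta w|^2+C(\varepsilon^{-1}+\sigma_*^{-2})s^3\lambda^4\int_Q\xi^3|w|^2 .
\]
Since $\alpha>0$, $\int_Q(s\xi)^{-1}|\Delta w|^2$ is bounded by $\sigma_*^{-1}\|P_1w\|^2$ plus $s\lambda^2\int_Q\xi|\nabla w|^2$ plus $s^3\lambda^4\int_Q\xi^3|w|^2$. Choosing $\varepsilon$ small closes this loop and gives $s\lambda^2\int_Q\xi|\nabla w|^2\lesssim\sigma_*^{-1}\|P_1w\|^2+s^3\lambda^4\int_Q\xi^3|w|^2$. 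Every gradient error in the commutator identity carries a small factor ($\lambda^{-1}$, $\sigma_*^{-1}$ or $\varepsilon$). These errors are therefore absorbed by $\|P_1w\|^2$ and the positive zeroth-order term $s^3\lambda^4\int_Q\xi^3|\nabla\eta_0|^4|w|^2$. Only $s^3\lambda^4\int_{\omega\times(0,T)}\xi^3|w|^2$ then needs to be localized, and no cutoff argument for the gradient is required.
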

	
	We identify $\mathbb R^2$ with $\mathbb C$ by
	\begin{align}\label{def. z=q1+iq2, G=F1+iF2}
		q=(q_1,q_2)^\top \longleftrightarrow z:=q_1+i q_2,
		\qquad
		F=(F_1,F_2)^\top \longleftrightarrow G:=F_1+iF_2.
	\end{align}
	Since multiplication by $A^\top=\alpha I-\gamma J$ on $\mathbb R^2$
	corresponds to that by $
	\beta:=\alpha-i\gamma$ on $\mathbb C$, the system is equivalent to
	\begin{equation}\label{eq:complex-backward}
		-\p_t z  -\beta\Delta z = G
		\qquad\text{in }Q,
	\end{equation}
	with $z(\cdot,T)=z_T:=q_{T,1}+iq_{T,2}$. We set
	\begin{align}\label{def psi}
		\psi:=e^{-s\varphi}z \h{10pt} \text{for $s>0$}.
	\end{align} 
By \Cref{rmk existence of solution to general linear eq.}, the weak second order spatial derivatives and time derivative of $z$ and $\psi$ exist in a strong sense on $Q$. Therefore, by \eqref{def psi}, a direct computation gives
	\[
	e^{-s\varphi}\bigl(-\p_t z  -\beta\Delta z\bigr)
	=
	-\p_t \psi-s\p_t \varphi\psi
	-\beta\Bigl(\Delta\psi+2s\nabla\varphi\cdot\nabla\psi
	+s(\Delta\varphi)\psi+s^2|\nabla\varphi|^2\psi\Bigr).
	\]
	Introduce the operators
	\begin{align}\label{def. A_0 and B_0}
		\mathcal{A}_0:=-\Delta-s^2|\nabla\varphi|^2,
		\qquad
		\mathcal{B}_0:=-2s\nabla\varphi\cdot\nabla-s(\Delta\varphi).
	\end{align}
	Then 
	\begin{align}\label{eq. of z and psi}
		e^{-s\varphi}\bigl(-\p_t z  -\beta\Delta z\bigr)
		=
		\mathcal S_\beta\psi+\mathcal K_\beta\psi
		=e^{-s\varphi}G
	\end{align} 
	where
	\begin{align}\label{def. S_beta and K_beta}
		\mathcal S_\beta:=\alpha \mathcal{A}_0-i\gamma \mathcal{B}_0-s\p_t \varphi,
		\qquad
		\mathcal K_\beta:=-\partial_t+\alpha \mathcal{B}_0-i\gamma \mathcal{A}_0.
	\end{align}
	Therefore, equation \eqref{eq. of z and psi} infers
	\begin{equation}\label{eq:basic-identity}
		\|\mathcal S_\beta\psi\|_{L^2(Q)}^2
		+
		\|\mathcal K_\beta\psi\|_{L^2(Q)}^2
		+
		2\textup{Re}\left(  \langle\mathcal S_\beta\psi,\mathcal K_\beta\psi\rangle_{L^2(Q)}\right) 
		=
		\|e^{-s\varphi}G\|_{L^2(Q)}^2,
	\end{equation}
	where $\langle w_1,w_2\rangle_{L^2(Q)}:=\int_Q w_1 \overline{w_2}$. Since $\mathcal{A}_0$ is self-adjoint and $\mathcal{B}_0$ is skew-adjoint in $L^2(Q;\mathbb C)$,
	we have
	\[
	2\textup{Re}\left(  \langle\mathcal S_\beta\psi,\mathcal K_\beta\psi\rangle_{L^2(Q)}\right) 
	=
	\big\langle[\mathcal S_\beta,\mathcal K_\beta]\psi,\psi\big\rangle_{L^2(Q)},
	\]
	where $[\mathcal S_\beta,\mathcal K_\beta]\psi
	:=
	(\mathcal S_\beta\mathcal K_\beta-\mathcal K_\beta\mathcal S_\beta)\psi$. A direct expansion gives
	\begin{align}\label{378}
		[\mathcal S_\beta,\mathcal K_\beta]
		=
		(\alpha^2+\gamma^2)[\mathcal{A}_0,\mathcal{B}_0]
		-2\alpha s[\p_t \varphi,\mathcal{B}_0]
		+2i\gamma s[\p_t \varphi,\mathcal{A}_0]
		-s\p_{tt} \varphi
	\end{align} when it acts on $\psi$.\smallskip

	We now explain why the splitting \eqref{eq. of z and psi} is preferable in the present system of equations setting. The main difficulty in proving \Cref{lem Carleman est principal ad eq} is the non-symmetry of the diffusion matrix \(A\). For this reason, we decompose into a self-adjoint part \(\mathcal S_\beta\) and a non-adjoint part \(\mathcal K_\beta\). By contrast, the splitting used in \cite[Lemma 1]{fernandez2006null} for the scalar equation groups together the terms according to their size, namely the \(s\lambda^2\)-terms and the \(s^2\lambda^2\)-terms. If one repeats that construction here, then \eqref{eq. of z and psi} can be written as $
	M^*\psi+M^{**}\psi=F_\beta,$ where
	\[
	\begin{aligned}
		M^*_\beta \psi &:=2\beta s\lambda^2\xi |\nabla \eta_0|^2 \psi
		+2\beta s\lambda\xi \nabla \eta_0 \cdot \nabla \psi -\partial_t \psi,\\
		M^{**}_\beta \psi &:= - \beta s^2 \lambda^2 \xi^2 |\nabla \eta_0|^2 \psi 
		-\beta \Delta \psi 
		- s\partial_t \varphi \,\psi,\\
		F_\beta &:= e^{-s\varphi}G   - \beta s\lambda\xi\Delta \eta_0 \psi
		+ \beta s\lambda^2\xi |\nabla \eta_0|^2\psi.
	\end{aligned}
	\]
	In the present complex case, the cross term $\textup{Re}\left(\bigl\langle M^*_\beta\psi,M^{**}_\beta\psi\bigr\rangle_{L^2(Q)}\right)$ produces mixed terms involving the imaginary part, namely
	$$\gamma\int_Q \textup{Im}\bigl(\p_t \psi \,\overline{\Delta\psi}\bigr),\quad\gamma s^2\lambda\,
	\textup{Im}\left(\int_Q
	\xi\,\p_t \varphi\,
	(\nabla\eta_0\cdot\nabla\psi)\,\overline{\psi}\ \right)	
	,\quad
	\gamma s^2\lambda^2
	\int_Q \xi^2|\nabla\eta_0|^2\, \textup{Im}(\p_t \psi \overline{\psi}).$$ 
	These terms are hard to deal with because they are not naturally controlled by the Carleman weights appearing in the desired estimate. \smallskip
	
	The advantage of the decomposition \eqref{eq. of z and psi} is that these imaginary contributions are absorbed into the non-negative term $	\|\mathcal S_\beta\psi\|_{L^2(Q)}^2
	+
	\|\mathcal K_\beta\psi\|_{L^2(Q)}^2$ in \eqref{eq:basic-identity}, which is dropped at the end of the estimate. The only explicit lower-order contribution of the imaginary part that remains after passing to the commutator is $
	2i\gamma s[\partial_t\varphi,\mathcal A_0]$, and this term can be estimated because $
	[\partial_t\varphi,\mathcal A_0]
	=
	2\nabla(\partial_t\varphi)\cdot\nabla+\Delta(\partial_t\varphi)$
	is a genuine lower-order perturbation.\smallskip


	We prove \Cref{lem Carleman est principal ad eq} by considering the terms in \eqref{eq:basic-identity} and \eqref{eq:carleman-complex-system}.
	\begin{lemma}\label{lem ineq of  dt z + Delta z}
		Assume the hypotheses of \Cref{lem Carleman est principal ad eq}, and recall the definitions in
		\eqref{def. z=q1+iq2, G=F1+iF2} and \eqref{def psi}. Then there is a constant $C=C(\alpha,\eta_0,\gamma)>0$ such that for any $\lambda\geq 1$, $\sigma_* \geq 1$ and $s\ge \sigma_*(e^{2\lambda \|\eta_0\|_\infty}T+T^2)$, it holds that \begin{align}\label{ineq. d_t z + Delta z}  
			&\int_{\mathbb T^n \times (0,T)} e^{-2s\varphi}(s\xi)^{-1}
			\left( |\p_t z  |^2 +|\Delta z|^2 \right) \nonumber \\
			&\le
			C\left[\dfrac{1}{\sigma_*}
			\|\mathcal S_\beta\psi\|_{L^2(\mathbb T^n \times (0,T))}^2
			+
			\int_{\mathbb T^n \times (0,T)} 
			\left(s\lambda^2\xi\,|\nabla\psi|^2
			+
			\dfrac{1}{\sigma_*}s^3\lambda^4\xi^3\,|\psi|^2 +\dfrac{1}{\sigma_*}e^{-2s\varphi}|G|^2\right)\right]. 
		\end{align}
	\end{lemma}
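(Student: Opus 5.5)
The plan is to bound $\Delta\psi$ and $\partial_t\psi$ first, using the structure of $\mathcal S_\beta$, and then transfer the bounds to $z=e^{s\varphi}\psi$. We begin by recording the weight bounds. For $s\ge\sigma_*(e^{2\lambda\|\eta_0\|_\infty}T+T^2)$ one has
\[
|\nabla\varphi|\le\lambda\xi\|\nabla\eta_0\|_\infty,\qquad |\Delta\varphi|\le C\lambda^2\xi,\qquad |\partial_t\varphi|\le CTe^{2\lambda\|\eta_0\|_\infty}\xi^2 .
\]
Since $t(T-t)\le T^2/4$, we also have $\xi\ge 4/T^2$. Combining these, $(s\xi)^{-1}\le T^2/(4s)\le C/\sigma_*$, and $s|\partial_t\varphi|\le C\sigma_*^{-1}s^2\xi^2$. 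These bounds are what produce the factor $1/\sigma_*$ in front of the $\mathcal S_\beta$, $|\psi|^2$ and $G$ terms.

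\textbf{Step 1: bound $\Delta\psi$.} Rewrite the definition \eqref{def. S_beta and K_beta} as
\[
\alpha\Delta\psi=-\mathcal S_\beta\psi-\alpha s^2|\nabla\varphi|^2\psi-i\gamma\bigl(2s\nabla\varphi\cdot\nabla\psi+s\Delta\varphi\,\psi\bigr)-s\partial_t\varphi\,\psi .
\]
Multiply the squared modulus by $(s\xi)^{-1}$ and integrate. The term $(s\xi)^{-1}|\mathcal S_\beta\psi|^2$ is bounded by $C\sigma_*^{-1}|\mathcal S_\beta\psi|^2$. The terms $(s\xi)^{-1}s^4\lambda^4\xi^4|\psi|^2$ and $(s\xi)^{-1}s^2|\partial_t\varphi|^2|\psi|^2$ are bounded by $s^3\lambda^4\xi^3|\psi|^2$. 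The gradient term gives $(s\xi)^{-1}s^2\lambda^2\xi^2|\nabla\psi|^2=s\lambda^2\xi|\nabla\psi|^2$. The $\Delta\varphi$ term is lower order. The $\sigma_*^{-1}$ in front of the $|\psi|^2$ term comes from using one more factor $(s\xi)^{-1}\lesssim\sigma_*^{-1}$; if this is too tight, the spare powers can be absorbed in the constant through $\lambda\ge1$. This yields
\[
\int_Q (s\xi)^{-1}|\Delta\psi|^2\le C\int_Q\Bigl(\sigma_*^{-1}|\mathcal S_\beta\psi|^2+s\lambda^2\xi|\nabla\psi|^2+\sigma_*^{-1}s^3\lambda^4\xi^3|\psi|^2\Bigr).
\]

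\textbf{Step 2: bound $\partial_t z$ and $\Delta z$.} We expand $e^{-s\varphi}\Delta z=\Delta\psi+2s\nabla\varphi\cdot\nabla\psi+(s\Delta\varphi+s^2|\nabla\varphi|^2)\psi$. Every term beyond $\Delta\psi$ is again bounded, after multiplying by $(s\xi)^{-1}$, by the gradient and $|\psi|^2$ terms above, so $e^{-2s\varphi}(s\xi)^{-1}|\Delta z|^2$ is controlled. For the time derivative we do not estimate $\partial_t\psi$ through $\mathcal K_\beta$, because that would require $\|\mathcal K_\beta\psi\|^2$, which does not appear on the right-hand side. Instead we use the equation \eqref{eq:complex-backward} directly: $\partial_t z=-\beta\Delta z-G$. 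This gives
\[
e^{-2s\varphi}(s\xi)^{-1}|\partial_t z|^2\le 2|\beta|^2e^{-2s\varphi}(s\xi)^{-1}|\Delta z|^2+2(s\xi)^{-1}e^{-2s\varphi}|G|^2 ,
\]
and the last term is bounded by $C\sigma_*^{-1}e^{-2s\varphi}|G|^2$. Summing the two estimates gives \eqref{ineq. d_t z + Delta z}.

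The main obstacle is the bookkeeping of the weight powers, in particular the bound on $\partial_t\varphi$. One has
\[
|\partial_t\varphi|\le \frac{T}{t^2(T-t)^2}\,e^{2\lambda\|\eta_0\|_\infty}=T\xi^2e^{2\lambda\|\eta_0\|_\infty-2\lambda\eta_0}.
\]
To dominate this one needs $s\ge\sigma_*Te^{2\lambda\|\eta_0\|_\infty}$, which is why that term is built into the threshold on $s$. This is what makes $s^{-1}|s\partial_t\varphi|^2\lesssim\sigma_*^{-1}s^3\xi^3$ hold, with no $\lambda^4$ loss, since $\lambda\ge1$.
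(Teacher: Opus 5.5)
Your argument has a gap exactly at the point you flagged as possibly ``too tight''. In Step~1 the term $\alpha s^2|\nabla\varphi|^2\psi$ contributes
\[
(s\xi)^{-1}\,s^4|\nabla\varphi|^4|\psi|^2=s^3\lambda^4\xi^3|\nabla\eta_0|^4|\psi|^2 ,
\]
and there is no leftover factor $(s\xi)^{-1}$. So there is no spare power of $s\xi$ to trade for $\sigma_*^{-1}$, and $\lambda\ge1$ cannot produce one. The same term reappears in Step~2, where you expand $e^{-s\varphi}\Delta z$ and bound $\Delta\psi$ and $s^2|\nabla\varphi|^2\psi$ separately. What you actually prove is therefore the inequality with $C\,s^3\lambda^4\xi^3|\psi|^2$ in place of $C\sigma_*^{-1}s^3\lambda^4\xi^3|\psi|^2$. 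Since $C$ must not depend on $\sigma_*\in[1,\infty)$, this is strictly weaker than the statement. The remaining bounds are correct: the $\mathcal S_\beta$, gradient, $\Delta\varphi$ and $\partial_t\varphi$ terms are handled properly, up to a harmless sign on the $i\gamma$ term. Recovering $\partial_t z$ from the equation rather than from $\mathcal K_\beta$ is exactly what the paper does.

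The missing idea is that the two occurrences of $s^2|\nabla\varphi|^2\psi$ cancel if you never isolate $\Delta\psi$. We have $e^{-s\varphi}\Delta z=-(\mathcal A_0+\mathcal B_0)\psi$ and, by \eqref{def. S_beta and K_beta}, $\alpha\mathcal A_0\psi=\mathcal S_\beta\psi+i\gamma\mathcal B_0\psi+s\partial_t\varphi\,\psi$. Hence
\[
e^{-s\varphi}\Delta z=-\tfrac1\alpha\bigl(\mathcal S_\beta\psi+s\partial_t\varphi\,\psi\bigr)-\bigl(1+\tfrac{i\gamma}{\alpha}\bigr)\mathcal B_0\psi .
\]
Here the only zeroth-order terms are $s\partial_t\varphi\,\psi$ and $s(\Delta\varphi)\psi$. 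After multiplication by $(s\xi)^{-1}$ they give $\sigma_*^{-2}s^3\xi^3|\psi|^2$ and $s\lambda^4\xi|\psi|^2\lesssim\sigma_*^{-2}s^3\lambda^4\xi^3|\psi|^2$ respectively, exactly as in your own computations. This is the paper's route: it estimates $(s\xi)^{-1}|\mathcal A_0\psi|^2$ rather than $(s\xi)^{-1}|\Delta\psi|^2$, and then uses $\Delta z=-e^{s\varphi}(\mathcal A_0+\mathcal B_0)\psi$. Your weaker version would in fact suffice where the lemma is used in the proof of \Cref{lem Carleman est principal ad eq}, because the $s^3\lambda^4\xi^3|\psi|^2$ term is controlled there by \eqref{eq:intermediate-z-fixed}. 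It does not, however, establish the lemma as stated.
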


	\begin{proof}
		
		\smallskip
		
		\noindent\textbf{Step 1. Estimate of \(\mathcal{A}_0\psi\):} Since \(\alpha>0\), the definition in \eqref{def. S_beta and K_beta} shows that
		\begin{equation}\label{eq:A0-pre}
			\int_Q (s\xi)^{-1}|\mathcal{A}_0\psi|^2 
			\lesssim_{\,\alpha,\gamma}
			\int_Q (s\xi)^{-1}|\mathcal S_\beta\psi|^2 
			+
			 (s\xi)^{-1}|\mathcal{B}_0\psi|^2 
			+
			s\xi^{-1}|\p_t \varphi |^2|\psi|^2.
		\end{equation} 
		We first estimate the integral involving \(\mathcal S_\beta\). Since $	\xi(x,t)=\frac{e^{\lambda\eta_0(x)}}{t(T-t)}
		\ge \frac4{T^2}$ for all $(x,t)\in Q$ and \(s\ge \sigma_*(e^{2\lambda \|\eta_0\|_\infty}T+T^2)\), we have
		\begin{align}\label{ineq. 1/s xi }
			(s\xi)^{-1}\le \frac{T^2}{4s}\lesssim \dfrac{1}{\sigma_*} \h{10pt} \text{and} \h{10pt}(s\xi)^{-1} \lesssim \dfrac{T}{\sigma_*}.
		\end{align}
		Thus
		\begin{equation}\label{eq:Sbeta-weighted}
			\int_Q (s\xi)^{-1}|\mathcal S_\beta\psi|^2
			\lesssim \dfrac{1}{\sigma_*}
			\|\mathcal S_\beta\psi\|_{L^2(Q)}^2.
		\end{equation}
		
		Next, using
		\begin{align}\label{eq. D varphi, D^2 varphi}
			\nabla\varphi=-\lambda\,\xi\,\nabla\eta_0,
			\qquad
			\Delta\varphi=-\lambda^2\xi|\nabla\eta_0|^2-\lambda\,\xi\,\Delta\eta_0,
		\end{align}
		we may write
		\[
		\mathcal{B}_0\psi
		=
		-2s\nabla\varphi\cdot\nabla\psi-s(\Delta\varphi)\psi
		=
		2s\lambda\xi\,\nabla\eta_0\cdot\nabla\psi
		+
		s\lambda^2\xi|\nabla\eta_0|^2\psi
		+
		s\lambda\xi(\Delta\eta_0)\psi.
		\]
		Since \(\eta_0\in C^4(\mathbb T^n;\mathbb{R}_+)\), we multiply by \((s\xi)^{-1/2}\) and use \eqref{ineq. 1/s xi } to get  
		\begin{equation}\label{eq:B0-weighted}
			\int_Q (s\xi)^{-1}|\mathcal{B}_0\psi|^2 
			\lesssim_{\,\eta_0}
			\int_Q s\lambda^2\xi\,|\nabla\psi|^2
			+
			\dfrac{1}{\sigma_*^2}s^3\lambda^4\xi^3\,|\psi|^2.
		\end{equation}
		
		Finally, we estimate the integral involving \(\p_t \varphi \). One has
		\[
		|\p_t \varphi (x,t)|
		=\left|\dfrac{(T-2t)\varphi(x,t)}{t(T-t)}\right|
		\lesssim T\,e^{2\lambda \|\eta_0\|_\infty}\xi(x,t)^2.
		\] 
		Using \(s\ge \sigma_*(e^{2\lambda \|\eta_0\|_\infty}T+T^2)\) and $\lambda\geq 1$, it follows that 
		\begin{equation}\label{eq:phit-weighted}
			\int_Q s\xi^{-1}|\p_t \varphi |^2|\psi|^2
			\lesssim
			\dfrac{1}{\sigma_*^2} 
			\int_Q s^3\lambda^4\xi^3|\psi|^2.
		\end{equation}
		
		Combining \eqref{eq:A0-pre}, \eqref{eq:Sbeta-weighted}, \eqref{eq:B0-weighted}, and
		\eqref{eq:phit-weighted}, we arrive at
		\begin{equation}\label{eq:A0-final}
			\int_Q (s\xi)^{-1}|\mathcal{A}_0\psi|^2 
			\lesssim_{\,\alpha,\eta_0,\gamma}
			\dfrac{1}{\sigma_*}  \|\mathcal S_\beta\psi\|_{L^2(Q)}^2
			+
			\int_Q  
			s\lambda^2\xi\,|\nabla\psi|^2
			+
			\dfrac{1}{\sigma_*^2}s^3\lambda^4\xi^3\,|\psi|^2 .
		\end{equation}
		
		\smallskip
		
		\noindent\textbf{Step 2. Estimate of \(e^{-s\varphi}\Delta z\):}
		Since \(z=e^{s\varphi}\psi\), we use the definitions of \(\mathcal{A}_0\) and \(\mathcal{B}_0\) in \eqref{def. A_0 and B_0} to express
		\begin{align*}
			\Delta z
			=
			e^{s\varphi}
			\Bigl(
			\Delta\psi+2s\nabla\varphi\cdot\nabla\psi+s(\Delta\varphi)\psi+s^2|\nabla\varphi|^2\psi
			\Bigr)	=
			-e^{s\varphi}(\mathcal{A}_0\psi+\mathcal{B}_0\psi).
		\end{align*}
		Hence
		\[
		|e^{-s\varphi}\Delta z|^2
		\le
		2|\mathcal{A}_0\psi|^2+2|\mathcal{B}_0\psi|^2,
		\]
		and therefore, by \eqref{eq:A0-final} and \eqref{eq:B0-weighted},
		\begin{equation}\label{eq:Delta-final}
			\begin{aligned}
				\int_Q e^{-2s\varphi}(s\xi)^{-1}|\Delta z|^2
				&\lesssim_{\,\alpha,\eta_0,\gamma} 
\dfrac{1}{\sigma_*}  \|\mathcal S_\beta\psi\|_{L^2(Q)}^2
+
\int_Q  
s\lambda^2\xi\,|\nabla\psi|^2
+
\dfrac{1}{\sigma_*^2}s^3\lambda^4\xi^3\,|\psi|^2 .
			\end{aligned}
		\end{equation}
		
		\smallskip
		
		\noindent\textbf{Step 3. Estimate of \(e^{-s\varphi}\p_t z  \):}
		Taking square of the equation \eqref{eq:complex-backward} and then multiplying by \(e^{-2s\varphi}(s\xi)^{-1}\), we use \eqref{ineq. 1/s xi } to obtain
		\[
		\begin{aligned}
			\int_Q e^{-2s\varphi}(s\xi)^{-1}|\p_t z  |^2
			&\lesssim_{\,\alpha,\gamma}
			\int_Q e^{-2s\varphi}(s\xi)^{-1}|\Delta z|^2
			+\dfrac{1}{\sigma_*}e^{-2s\varphi}|G|^2.
		\end{aligned}
		\]
		Combining this with \eqref{eq:Delta-final} yields
		\begin{equation*}
			\begin{aligned}
				\int_Q e^{-2s\varphi}(s\xi)^{-1}|\p_t z |^2
				&\lesssim_{\,\alpha,\eta_0,\gamma}
				  \dfrac{1}{\sigma_*}\|\mathcal S_\beta\psi\|_{L^2(Q)}^2
				+
				\int_Q
				s\lambda^2\xi\,|\nabla\psi|^2
				+
				\dfrac{1}{\sigma_*^2}s^3\lambda^4\xi^3\,|\psi|^2
				+ \dfrac{1}{\sigma_*}e^{-2s\varphi}|G|^2 .
			\end{aligned}
	\end{equation*}	
As $\sigma_*\geq 1$, we can change $\dfrac{1}{\sigma_*^2}$ to $\dfrac{1}{\sigma_*}$ in the above inequality to obtain the desired result.
\end{proof}

	\begin{lemma}\label{lem lower order}
		Assume the hypotheses of \Cref{lem Carleman est principal ad eq}, and recall the definitions in
		\eqref{def. z=q1+iq2, G=F1+iF2} and \eqref{def psi}. Then there is a constant $C=C(\alpha,\eta_0,\gamma)>0$ such that for any 
        $\lambda_*\geq 1$, $\lambda\geq \lambda_*$, $\sigma_* \geq 1$ and $s\ge \sigma_*(e^{2\lambda \|\eta_0\|_\infty}T+T^2)$, the quantity
		\[
		I_{\rm low}
		:=
		\big\langle
		(
		-2\alpha s[\partial_t\varphi,\mathcal{B}_0]
		+2i\gamma s[\partial_t\varphi,\mathcal{A}_0]-s\partial_{tt}\varphi)\psi,\psi
		\big\rangle_{L^2(\mathbb T^n \times (0,T))}
		\]
		satisfies
		\begin{equation*}
			\begin{aligned}
				|I_{\rm low}|
				&\le C\left[ 
				\int_{\mathbb T^n \times (0,T)}  \dfrac{1}{\sigma_*}s\lambda^2\xi\,|\nabla\psi|^2\ 
				+\dfrac{1}{\lambda_*}    s^3\lambda^4\xi^3\,|\psi|^2 \right].
			\end{aligned}
		\end{equation*}
	\end{lemma}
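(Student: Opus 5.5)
The plan is to compute each of the three commutators explicitly as multiplication operators, plus one first-order term. Then I will bound every coefficient pointwise by powers of $s,\lambda,\xi$, and absorb each term using the lower bounds $s\ge\sigma_*(e^{2\lambda\|\eta_0\|_\infty}T+T^2)$ and $\xi\ge 4/T^2$. Only the gradient term from $[\partial_t\varphi,\mathcal A_0]$ needs Cauchy--Schwarz; the others are pure $|\psi|^2$ terms.

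\textbf{Step 1 (commutators).} For a scalar $f$ one has $\mathcal B_0(f\psi)=f\mathcal B_0\psi-2s(\nabla\varphi\cdot\nabla f)\psi$. Hence
\[
[\partial_t\varphi,\mathcal B_0]\psi=2s(\nabla\varphi\cdot\nabla\partial_t\varphi)\psi ,
\]
and similarly $[\partial_t\varphi,\mathcal A_0]\psi=2\nabla\partial_t\varphi\cdot\nabla\psi+(\Delta\partial_t\varphi)\psi$. Therefore
\[
I_{\rm low}=-4\alpha s^2\int_Q(\nabla\varphi\cdot\nabla\partial_t\varphi)|\psi|^2+2i\gamma s\int_Q\bigl(2(\nabla\partial_t\varphi\cdot\nabla\psi)\overline\psi+\Delta\partial_t\varphi\,|\psi|^2\bigr)-s\int_Q\partial_{tt}\varphi\,|\psi|^2 .
\]

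\textbf{Step 2 (pointwise weight bounds).} I will use $\partial_t\xi=\frac{2t-T}{t(T-t)}\xi$ and $\frac{1}{t(T-t)}\le\xi$ (because $\eta_0>0$), together with \eqref{eq. D varphi, D^2 varphi}. These give
\[
|\nabla\varphi|\lesssim\lambda\xi,\quad |\nabla\partial_t\varphi|\lesssim\lambda T\xi^2,\quad |\Delta\partial_t\varphi|\lesssim\lambda^2T\xi^2,\quad |\partial_{tt}\varphi|\lesssim e^{2\lambda\|\eta_0\|_\infty}(T^2\xi^3+\xi^2),
\]
with constants depending only on $\eta_0$. Wherever a $\xi^2$ appears without a factor of $T^2$, I upgrade it via $1\le T^2\xi/4$.

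\textbf{Step 3 (absorption).} Each zeroth-order term is compared with $s^3\lambda^4\xi^3|\psi|^2$.
\begin{itemize}
\item The $\alpha$-term is $\lesssim s^2\lambda^2T\xi^3|\psi|^2=s^3\lambda^4\xi^3|\psi|^2\cdot\frac{T}{s\lambda^2}$, and $\frac{T}{s\lambda^2}\le\frac{1}{\sigma_*\lambda^2}\le\frac{1}{\lambda_*}$.
\item The $\Delta\partial_t\varphi$-term is handled the same way.
\item The $\partial_{tt}\varphi$-term has ratio $\lesssim\frac{e^{2\lambda\|\eta_0\|_\infty}T^2}{s^2\lambda^4}$. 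I bound this using $s^2\ge\sigma_*^2e^{2\lambda\|\eta_0\|_\infty}T\cdot T^2$, so it is small provided $s$ is large relative to both $T$ and $T^2$.
\end{itemize}
For the mixed gradient term I apply Cauchy--Schwarz with weight $\epsilon=1/\sigma_*$:
\[
s\lambda T\xi^2|\nabla\psi||\psi|\le\tfrac1{\sigma_*}s\lambda^2\xi|\nabla\psi|^2+\sigma_*sT^2\xi^3|\psi|^2 .
\]
The last term is then traded against $s^3\lambda^4\xi^3$ using $s\ge\sigma_*T^2$. This yields a factor $\lesssim\frac{1}{s\lambda^4}$, which I then need to bound by $1/\lambda_*$.

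\textbf{Main obstacle.} The main obstacle is this bookkeeping, namely making every ratio independent of $T$. The lower bound on $s$ mixes $T$ and $T^2$, while the desired constant $C$ may depend only on $\alpha,\eta_0,\gamma$. My first approach is to split each power of $T$ carefully between the two pieces $\sigma_*e^{2\lambda\|\eta_0\|_\infty}T$ and $\sigma_*T^2$ of the lower bound on $s$, and to use $\xi\ge4/T^2$ to supply any missing $T^{-2}$. If a residual factor of $T$ remains, I would instead use the Cauchy--Schwarz weight $\epsilon=T/\sigma_*$, which redistributes the $T$-powers so that each ratio becomes a product of $1/\sigma_*$ or $1/\lambda$ with a dimensionless quantity. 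The imaginary structure gives no cancellation here, because the factor $2i\gamma$ is harmless once absolute values are taken.
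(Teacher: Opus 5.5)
Your plan matches the paper's proof: explicit commutators, pointwise weight bounds, Young's inequality on the single gradient term, and absorption through the lower bound on $s$. Your commutator formulas and the bounds $|\nabla\partial_t\varphi|\lesssim\lambda T\xi^2$, $|\Delta\partial_t\varphi|\lesssim\lambda^2T\xi^2$, $|\partial_{tt}\varphi|\lesssim e^{2\lambda\|\eta_0\|_\infty}T^2\xi^3$ are correct, and the $\alpha$- and $\Delta\partial_t\varphi$-terms close as you say. However, two absorption steps in Step 3 fail as written. They leave factors that are not bounded uniformly in $T$, while $C$ must depend only on $\alpha,\eta_0,\gamma$. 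Your proposed fallback does not repair them.

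\textbf{The $\partial_{tt}\varphi$-term.} The ratio is $e^{2\lambda\|\eta_0\|_\infty}T^2/(s^2\lambda^4)$. Your mixed bound $s^2\ge\sigma_*^2e^{2\lambda\|\eta_0\|_\infty}T^3$ leaves $1/(\sigma_*^2T\lambda^4)$, which blows up as $T\to0$.

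\textbf{The gradient term.} Trading the Young remainder $\sigma_*sT^2\xi^3|\psi|^2$ via $s\ge\sigma_*T^2$ leaves $1/(s\lambda^4)$. This cannot be bounded by $1/\lambda_*$, because the lemma must hold for $s$ equal to $\sigma_*(e^{2\lambda\|\eta_0\|_\infty}T+T^2)$, which tends to $0$ with $T$. Switching to the weight $\epsilon=T/\sigma_*$ makes things worse. The gradient coefficient becomes $T/\sigma_*$, which is unbounded for large $T$. The remainder ratio becomes $\sigma_*T/(4s^2\lambda^4)$, which is still of order $1/T$ for small $T$.

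\textbf{The fix.} The missing point is only which piece of the lower bound on $s$ to use. Since $e^{2\lambda\|\eta_0\|_\infty}\ge1$, you have $s\ge\sigma_*T$. With your original weight $\epsilon=1/\sigma_*$ the remainder ratio is then $\sigma_*T^2/(4s^2\lambda^4)\le 1/(4\sigma_*\lambda^4)\le 1/\lambda_*$. For the $\partial_{tt}\varphi$-term, square only the first piece: $s^2\ge\sigma_*^2e^{4\lambda\|\eta_0\|_\infty}T^2\ge\sigma_*^2e^{2\lambda\|\eta_0\|_\infty}T^2$, which gives the ratio $1/(\sigma_*^2\lambda^4)$.

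This is exactly the paper's bookkeeping. It uses $e^{2\lambda\|\eta_0\|_\infty}T^2\le(e^{2\lambda\|\eta_0\|_\infty}T+T^2)^2\le s^2/\sigma_*^2$, and for the gradient term it effectively takes the Young weight $\varepsilon T/s$, then applies $T\le s/\sigma_*$ to both resulting pieces. With these two corrections your argument is complete.
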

	\begin{proof}
		We carry out the estimate term by term.
		
		\smallskip
		\noindent
		\textbf{Step 1. Computation of $[\p_t \varphi,\mathcal{A}_0]$:} 
		Since $\mathcal{A}_0=-\Delta-s^2|\nabla\varphi|^2$ and $s^2|\nabla\varphi|^2$ commutes with $\p_t \varphi$, we have
		\[
		[\p_t \varphi,\mathcal{A}_0]\psi
		=
		[\p_t \varphi,-\Delta]\psi.
		\]
		Expanding,
		\[
		\begin{aligned}
			[\p_t \varphi,-\Delta]\psi
			=
			-\p_t \varphi\Delta\psi+\Delta(\p_t \varphi\psi)
			=
			-\p_t \varphi\Delta\psi
			+\p_t \varphi\Delta\psi
			+2\nabla\p_t \varphi\cdot\nabla\psi
			+(\Delta\p_t \varphi)\psi,
		\end{aligned}
		\]
		whence
		\begin{equation}\label{eq:phi_t_A0}
			[\p_t \varphi,\mathcal{A}_0]\psi
			=
			2\nabla\p_t \varphi\cdot\nabla\psi
			+
			(\Delta\p_t \varphi)\psi.
		\end{equation}
		
		\smallskip
		\noindent
		\textbf{Step 2. Computation of $[\p_t \varphi,\mathcal{B}_0]$:} We first observe that $[\p_t \varphi,\,-s(\Delta\varphi)]\psi=0$ and $
		\nabla(\p_t \varphi\psi)=\nabla\p_t \varphi\,\psi+\p_t \varphi\nabla\psi$. It follows that
		\begin{equation}\label{eq:phi_t_B0}
			[\p_t \varphi,\mathcal{B}_0]\psi
			=
			-2s\p_t \varphi\,\nabla\varphi\cdot\nabla\psi
			+
			2s\nabla\varphi\cdot\nabla(\p_t \varphi\psi)
			=
			2s\,\nabla\varphi\cdot\nabla\p_t \varphi\,\psi.
		\end{equation}
		
		\smallskip
		\noindent
		\textbf{Step 3. Bounds for the derivatives of the weights:} 
		Since $
		\p_t \xi (x,t)
		=
		-e^{\lambda\eta_0(x)}\frac{T-2t}{[t(T-t)]^2}$, and therefore
		\[
		|\p_t \xi |
		\leq 
		\frac{Te^{\lambda\eta_0}}{[t(T-t)]^2}
		=
		Te^{-\lambda\eta_0}\xi^2
		\le
		T\xi^2.
		\]  
		Moreover, using \eqref{eq. D varphi, D^2 varphi}, we have
		\[
		\nabla\partial_t\varphi
		=
		-\lambda\p_t \xi \nabla\eta_0,\h{10pt} \text{and} \h{10pt}
		\Delta\partial_t\varphi
		=
		-\lambda^2\p_t \xi |\nabla\eta_0|^2-\lambda\p_t \xi \Delta\eta_0.
		\] 
		It follows that, as $\lambda\ge1$,
		\begin{equation}\label{eq:weight-bounds}
			|\nabla\partial_t\varphi|\lesssim_{\,\eta_0} T \lambda\xi^2,
			\qquad
			|\Delta\partial_t\varphi|\lesssim_{\,\eta_0} T\lambda^2\xi^2,
			\qquad
			|\nabla\varphi\cdot\nabla\partial_t\varphi|\lesssim_{\,\eta_0} T\lambda^2\xi^3.
		\end{equation} 
		Finally, we deduce
		\begin{equation}\label{eq:phi-tt-bound}
			|\p_{tt} \varphi(x,t)|
			\lesssim
			\frac{T^2e^{2\lambda\|\eta_0\|_\infty}}{t^3(T-t)^3}
			= T^2 e^{2\lambda \|\eta_0\|_\infty-3\lambda\eta_0(x)}\,\xi(x,t)^3
			\le
			T^2 e^{2\lambda \|\eta_0\|_\infty}\,\xi(x,t)^3.
		\end{equation}
		 
		\noindent
		\textbf{Step 4. Estimate of the lower-order commutators:} 
		Using \eqref{eq. D varphi, D^2 varphi}, \eqref{eq:phi_t_A0}, \eqref{eq:phi_t_B0}, \eqref{eq:weight-bounds}, and \eqref{eq:phi-tt-bound},
		we find
		\[
		\begin{aligned}
			|I_{\rm low}|
			&\le
			\int_Q s|\p_{tt} \varphi|\,|\psi|^2
			+
			2\alpha s \pig|[\partial_t\varphi,\mathcal{B}_0]\psi\pig|\,|\psi|
			+
			2|\gamma|s \pig|[\partial_t\varphi,\mathcal{A}_0]\psi\pig|\,|\psi|
			\\
			&\lesssim_{\,\alpha,\eta_0,\gamma}
			\int_Q sT^2e^{2\lambda \|\eta_0\|_\infty}\xi^3|\psi|^2
			+Ts^2\lambda^2\xi^3|\psi|^2
			+ Ts\lambda\xi^2|\nabla\psi|\,|\psi|
			+ Ts\lambda^2\xi^2|\psi|^2.
		\end{aligned}
		\] 
		Since $s\ge  \sigma_*\bigl(e^{2\lambda \|\eta_0\|_\infty}T+T^2\bigr)$, we have
		\[
		e^{2\lambda \|\eta_0\|_\infty}T^2
		\le (e^{2\lambda \|\eta_0\|_\infty}T+T^2)^2
		\le \dfrac{s^2}{\sigma_*^2}\leq s^2,
		\]
		and therefore
		\[
		sT^2e^{2\lambda \|\eta_0\|_\infty}\xi^3|\psi|^2
		\leq s^3\xi^3|\psi|^2
		\leq
		\dfrac{1}{\lambda_*^4}s^3\lambda^4\xi^3|\psi|^2.
		\]
		Let $\varepsilon>0$, we use Young's inequality and $s\ge  \sigma_*\bigl(e^{2\lambda \|\eta_0\|_\infty}T+T^2\bigr)$ to obtain
		\[
		s\lambda\xi^2|\nabla\psi|\,|\psi|
		\lesssim
		\varepsilon\, \lambda^2\xi|\nabla\psi|^2
		+
		\dfrac{1}{\varepsilon}\, s^2\xi^3|\psi|^2
		\lesssim
		\varepsilon\, \dfrac{s}{T \sigma_*}\lambda^2\xi|\nabla\psi|^2
		+
		\dfrac{1}{\varepsilon \sigma_*T \lambda_*^4}\, s^3\xi^3\lambda^4|\psi|^2.
		\]
		We again use $s\ge  \sigma_*\bigl(e^{2\lambda \|\eta_0\|_\infty}T+T^2\bigr)$, \eqref{ineq. 1/s xi } and $\lambda\geq \lambda_* \ge1$ to yield
		\[ 
		T s^2\lambda^2\xi^3|\psi|^2
		+
		T s\lambda^2\xi^2|\psi|^2
		\lesssim
		\dfrac{1}{\lambda_*^2}s^3\lambda^4\xi^3|\psi|^2.
		\] 
		Consequently,
		$$
			\begin{aligned}
				|I_{\rm low}|
				&\lesssim_{\,\alpha,\eta_0,\gamma}
				\dfrac{\varepsilon}{\sigma_*}
				\int_Q s\lambda^2\xi\,|\nabla\psi|^2 
				+\dfrac{1}{\lambda_*}
				\left(1+\dfrac{1}{\varepsilon}\right)
				\int_Q s^3\lambda^4\xi^3\,|\psi|^2.
			\end{aligned}
		$$

	\end{proof}
	Putting \eqref{eq. D varphi, D^2 varphi} into \eqref{def. A_0 and B_0} and then using the adjointness of $\mathcal{A}_0$ and $\mathcal{B}_0$, we expand the term $\langle[\mathcal{A}_0,\mathcal{B}_0]\psi,\psi\rangle_{L^2(Q)}$ in \eqref{378} by
	\begin{align}\label{eq. [A,B]psi,psi =}
		\langle[\mathcal{A}_0,\mathcal{B}_0]\psi,\psi\rangle_{L^2(Q)}
		=2\textup{Re}(	\langle\mathcal{B}_0\psi,\mathcal{A}_0\psi\rangle_{L^2(Q)})
		=
		2\mathcal Q(\psi)+\mathcal R(\psi),
	\end{align}
	where
	\[
	\mathcal Q(\psi):=
	\textup{Re}\left[\int_Q
	\Bigl(2s\lambda^2\xi|\nabla\eta_0|^2\psi+2s\lambda\,\xi\,\nabla\eta_0\cdot\nabla\psi\Bigr)
	\overline{\Bigl(-s^2\lambda^2\xi^2|\nabla\eta_0|^2\psi-\Delta\psi\Bigr)}\right]
	\]
	and
	\[
	\mathcal R(\psi)
	:=
	2\textup{Re}\left[\int_Q
	s\lambda\xi(\Delta \eta_0-\lambda |\nabla\eta_0|^2)\psi\,
	\overline{\Bigl(-s^2\lambda^2\xi^2 |\nabla\eta_0|^2\,\psi-\Delta\psi\Bigr)}\right].
	\]
	Set $X_1:=2s\lambda^2\xi|\nabla\eta_0|^2\psi$, $X_2:=2s\lambda\,\xi\,\nabla\eta_0\cdot\nabla\psi$, $Y_1:=-s^2\lambda^2\xi^2|\nabla\eta_0|^2\psi$ and $Y_2:=-\Delta\psi$. Note that only $\psi$ is complex in the expressions. Then we decompose
	\[
	\mathcal Q(\psi):=\sum_{i,j=1}^2 \textup{Re}\left(\int_Q X_i\,\overline{Y_j} \right) 
	=:Q_{11}+Q_{21}+Q_{12}+Q_{22},\quad\text{ and } \quad
		\mathcal R(\psi) 
	=R_1+R_2,
	\] 
	where
	\[
	R_1:=
	-2s\lambda\,\textup{Re}\left(\int_Q \nu\,\psi\,\overline{\Delta\psi} \right),\quad 
	R_2:=
	-2s^3\lambda^3\int_Q \xi^3
	\bigl(\Delta\eta_0-\lambda|\nabla\eta_0|^2\bigr)|\nabla\eta_0|^2\,|\psi|^2
	\]
	and
	\begin{align}\label{def nu} 
		\nu(x,t):=\xi(x,t)\bigl(\Delta\eta_0(x)-\lambda|\nabla\eta_0(x)|^2\bigr).
	\end{align}
	
	\begin{lemma}\label{lem Q11+Q21+R2} Assume the hypotheses of \Cref{lem Carleman est principal ad eq}. Then there exist constants $\lambda_\ast=\lambda_\ast(\eta_0)\ge 1$, $c=c(\eta_0)>0$ and $C=C(\eta_0)>0$, such that for every $
		\lambda\ge\lambda_\ast$, and $ s\ge e^{2\lambda\|\eta_0\|_\infty}T+T^2$, the following estimate holds:
		\begin{equation}\label{eq:Qzero}
			2Q_{11}+2Q_{21}+R_2
			\ge
			c\,s^3\lambda^4\int_{\mathbb T^n \times (0,T)} \xi^3|\psi|^2 
			-
			C\,s^3\lambda^4\int_{\omega\times(0,T)} \xi^3|\psi|^2.
		\end{equation}
	\end{lemma}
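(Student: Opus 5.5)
The plan is to compute the three terms explicitly. Each of them is a weighted integral of \(|\psi|^2\), possibly after one integration by parts in \(x\). The leading \(s^3\lambda^4\) coefficient should turn out to be a positive multiple of \(|\nabla\eta_0|^4\). Everything else is of order \(s^3\lambda^3\) and is absorbed by taking \(\lambda\) large. Only spatial integration by parts on \(\mathbb T^n\) is used, so there are no boundary terms. The time factor in \(\xi\) is harmless because \(e^{-s\varphi}\) makes \(\psi\) and its derivatives vanish to infinite order at \(t=0,T\), which keeps all integrals finite.

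\textbf{Step 1: the terms \(Q_{11}\) and \(R_2\).} Directly from the definitions,
\[
Q_{11}=\textup{Re}\int_Q X_1\overline{Y_1}=-2s^3\lambda^4\int_Q \xi^3|\nabla\eta_0|^4|\psi|^2 .
\]
This term is negative, so it must be compensated by the others. Also directly,
\[
R_2=2s^3\lambda^4\int_Q\xi^3|\nabla\eta_0|^4|\psi|^2-2s^3\lambda^3\int_Q\xi^3\Delta\eta_0\,|\nabla\eta_0|^2|\psi|^2 .
\]

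\textbf{Step 2: the cross term \(Q_{21}\).} Using \(\textup{Re}(\nabla\psi\,\overline\psi)=\tfrac12\nabla|\psi|^2\), we get
\[
Q_{21}=-s^3\lambda^3\int_Q\xi^3|\nabla\eta_0|^2\nabla\eta_0\cdot\nabla|\psi|^2
= s^3\lambda^3\int_Q \nabla\cdot\bigl(\xi^3|\nabla\eta_0|^2\nabla\eta_0\bigr)|\psi|^2 .
\]
Since \(\nabla\xi=\lambda\xi\nabla\eta_0\), we have \(\nabla\xi^3=3\lambda\xi^3\nabla\eta_0\). Hence
\[
Q_{21}=3s^3\lambda^4\int_Q\xi^3|\nabla\eta_0|^4|\psi|^2+s^3\lambda^3\int_Q\xi^3\,\nabla\cdot(|\nabla\eta_0|^2\nabla\eta_0)\,|\psi|^2 .
\]
The divergence on the right is bounded because \(\eta_0\in C^4\).

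\textbf{Step 3: summing.} Adding the pieces gives
\[
2Q_{11}+2Q_{21}+R_2=(-4+6+2)\,s^3\lambda^4\int_Q\xi^3|\nabla\eta_0|^4|\psi|^2+\mathcal E,
\qquad |\mathcal E|\le C_1(\eta_0)\,s^3\lambda^3\int_Q\xi^3|\psi|^2 .
\]

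\textbf{Step 4: localization and absorption.} By \eqref{eq:eta0} and compactness there is \(c_0>0\) with \(|\nabla\eta_0|\ge c_0\) on \(\mathbb T^n\setminus\omega_0\). Then
\[
4s^3\lambda^4\int_Q\xi^3|\nabla\eta_0|^4|\psi|^2\ \ge\ 4c_0^4 s^3\lambda^4\int_Q\xi^3|\psi|^2-4c_0^4 s^3\lambda^4\int_{\omega_0\times(0,T)}\xi^3|\psi|^2 ,
\]
and \(\omega_0\subset\omega\). Splitting the error the same way,
\[
|\mathcal E|\le C_1s^3\lambda^3\int_Q\xi^3|\psi|^2 .
\]
We choose \(\lambda_*\ge 2C_1/c_0^4\), so that for \(\lambda\ge\lambda_*\) we have \(C_1\lambda^3\le 2c_0^4\lambda^4\). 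This gives \eqref{eq:Qzero} with \(c=2c_0^4\) and \(C=4c_0^4\).

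No restriction on \(s\) beyond positivity is needed here; the lower bound on \(s\) in the statement is only there for consistency with the other lemmas.

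The main obstacle is the sign bookkeeping: \(Q_{11}\) alone is negative. The argument therefore depends on getting the factor \(3\) from differentiating \(\xi^3\) in \(Q_{21}\) exactly right, and on the \(-\lambda|\nabla\eta_0|^2\) part of \(R_2\) contributing with a positive sign. I would double-check these coefficients carefully, since an error there would flip the net \(s^3\lambda^4\) coefficient.
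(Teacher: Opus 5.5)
Your proposal is correct and follows essentially the same route as the paper. It uses the same explicit expressions for $Q_{11}$ and $R_2$, the same integration by parts in $Q_{21}$ (with the factor $3\lambda$ coming from $\nabla\xi^3=3\lambda\xi^3\nabla\eta_0$), the same localization away from $\omega_0$ via the compactness lower bound on $|\nabla\eta_0|$, and the same absorption of the $s^3\lambda^3$ errors by taking $\lambda$ large. The only difference is cosmetic: you sum to the net coefficient $4|\nabla\eta_0|^4$ before localizing, whereas the paper localizes $Q_{11}+Q_{21}$ and $R_2$ separately and then adds.
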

	
	\begin{proof} 
		\noindent
		\textbf{Step 1. Estimate of $Q_{11}+Q_{21}$:}
		We first recall
		\[
		Q_{11}
		=
		-2s^3\lambda^4\int_Q |\nabla\eta_0|^4\xi^3|\psi|^2
		\h{10pt} \text{ and } \h{10pt}
		Q_{21}
		=
		-2s^3\lambda^3\textup{Re}\left[\int_Q |\nabla\eta_0|^2\xi^3(\nabla\eta_0\cdot\nabla\psi)\,\overline{\psi} \right].
		\]
		Since $
		\textup{Re}\left[(\nabla\eta_0\cdot\nabla\psi)\overline{\psi}\,\right]
		=
		\frac12\,\nabla\eta_0\cdot\nabla(|\psi|^2)$, we get
		\[
		Q_{21}
		=
		-s^3\lambda^3\int_Q |\nabla\eta_0|^2\xi^3\,\nabla\eta_0\cdot\nabla(|\psi|^2)
		=
		s^3\lambda^3\int_Q \nabla \cdot \bigl(|\nabla\eta_0|^2\xi^3\nabla\eta_0\bigr)\,|\psi|^2.
		\] 
		Using $ \nabla\xi=\lambda\,\xi\,\nabla\eta_0$, we have
		\[
		\nabla \cdot \bigl(|\nabla\eta_0|^2\xi^3\nabla\eta_0\bigr)
		=
		3\lambda |\nabla\eta_0|^4\xi^3
		+
		\xi^3\,\nabla \cdot \bigl(|\nabla\eta_0|^2\nabla\eta_0\bigr).
		\]
		Therefore
		\[
		Q_{21}
		=
		3s^3\lambda^4\int_Q |\nabla\eta_0|^4\xi^3|\psi|^2
		+
		s^3\lambda^3\int_Q \nabla \cdot \bigl(|\nabla\eta_0|^2\nabla\eta_0\bigr)\xi^3|\psi|^2.
		\]
		Since $\eta_0\in C^4(\mathbb T^n;\mathbb{R}_+)$, thus
		\[
		Q_{11}+Q_{21}
		\geq
		s^3\lambda^4\int_Q |\nabla\eta_0|^4\xi^3|\psi|^2 
		-C_{\eta_0}
		s^3\lambda^3\int_Q \xi^3|\psi|^2
		\]
		for some constant $C_{\eta_0}>0$ depending only on $\eta_0$. Because $\omega_0\subset\subset \omega$ and $|\nabla\eta_0|>0$ on $\mathbb T^n\setminus\omega_0$,
		by compactness there exists $m_{\eta_0}>0$ such that 
		\[
		\begin{aligned}
			Q_{11}+Q_{21}
			&\ge
			m_{\eta_0}^4 s^3\lambda^4\int_{(\mathbb T^n\setminus\omega)\times(0,T)} \xi^3|\psi|^2
			-
			C_{\eta_0} s^3\lambda^3\int_Q \xi^3|\psi|^2 
			\\
			&=
			\bigl(m_{\eta_0}^4-C_{\eta_0}/\lambda\bigr)s^3\lambda^4
			\int_{(\mathbb T^n\setminus\omega)\times(0,T)} \xi^3|\psi|^2 
			-
			C_{\eta_0} s^3\lambda^3\int_{\omega\times(0,T)} \xi^3|\psi|^2.
		\end{aligned}
		\] 
		Choosing $\lambda\ge \lambda_*$ large enough (depending only on $\eta_0$), we obtain
		$$
		Q_{11}+Q_{21}
		\ge
		c_{\eta_0}\,s^3\lambda^4\int_Q \xi^3|\psi|^2 
		-
		C_{\eta_0}\,s^3\lambda^4\int_{\omega\times(0,T)} \xi^3|\psi|^2$$
		for some constant $c_{\eta_0}>0$ depending only on $\eta_0$. Note that the constants $c_{\eta_0}$ and $C_{\eta_0}$ may change their values from line to line, but only depend only on $\eta_0$.
		
		\noindent\textbf{Step 2: Computation of \(R_2\).}
		Expanding $R_2$ gives
		\[
		\begin{aligned}
			R_2
			&=
			-2s^3\lambda^3\int_Q \xi^3
			\bigl(\Delta\eta_0-\lambda|\nabla\eta_0|^2\bigr)|\nabla\eta_0|^2\,|\psi|^2
			\\
			&=
			2s^3\lambda^4\int_Q \xi^3|\nabla\eta_0|^4\,|\psi|^2 
			-
			2s^3\lambda^3\int_Q \xi^3(\Delta\eta_0)|\nabla\eta_0|^2\,|\psi|^2.
		\end{aligned}
		\]
		Since \(\eta_0\in C^4(\mathbb T^n;\mathbb{R}_+)\), thus
		\begin{equation*}
			R_2
			\ge
			2s^3\lambda^4\int_Q \xi^3|\nabla\eta_0|^4\,|\psi|^2 
			-
			C_{\eta_0} s^3\lambda^3\int_Q \xi^3|\psi|^2.
		\end{equation*}
		If necessary, we choose $\lambda_*$ further large enough, depending on $\eta_0$ only, then
		\begin{align*}
			R_2
			&\ge
			2s^3 \lambda^4 m_{\eta_0}^4 \int_{(\mathbb{T}^n\setminus\omega)\times(0,T)} \xi^3 \,|\psi|^2 
			-
			\dfrac{C_{\eta_0} s^3\lambda^4 }{\lambda_*}\int_{\omega\times(0,T)} \xi^3|\psi|^2
			-
			\dfrac{C_{\eta_0} s^3\lambda^4 }{\lambda_*} \int_{(\mathbb{T}^n\setminus\omega)\times(0,T)} \xi^3|\psi|^2\\
			&\ge
			s^3 \lambda^4 c_{\eta_0} \int_{(\mathbb{T}^n\setminus\omega)\times(0,T)} \xi^3 \,|\psi|^2 
			-
			C_{\eta_0} s^3\lambda^4  \int_{\omega\times(0,T)} \xi^3|\psi|^2 \\
			&=
			s^3 \lambda^4 c_{\eta_0} \int_{Q} \xi^3 \,|\psi|^2 
			-
			( c_{\eta_0}+ C_{\eta_0}) s^3\lambda^4  \int_{\omega\times(0,T)} \xi^3|\psi|^2.
		\end{align*}
		Summing with the inequality of $2Q_{11}+2Q_{21}$, it yields the result. 
	\end{proof}

	\begin{lemma}\label{lem Q12+Q22+R1}	Assume the hypotheses of \Cref{lem Carleman est principal ad eq}. Then there exist constants $C=C(\eta_0)>0$, such that for every $\varepsilon>0$, $\sigma_*\geq 1$, $\lambda_*\geq 1$, $
		\lambda\ge\lambda_\ast$ and $ s\ge \sigma_*(e^{2\lambda\|\eta_0\|_\infty}T+T^2)$, the following estimate holds:  
		\begin{equation*}
			\begin{aligned}
				2(Q_{12}+Q_{22})+R_1
				&\ge
				4s\lambda^2\int_{\mathbb T^n \times (0,T)} \xi\,|\nabla\eta_0\cdot\nabla\psi|^2 
				- 
				C\left( \varepsilon+\dfrac{1}{\lambda_*}\right)  s\lambda^2\int_{\mathbb T^n \times (0,T)} \xi|\nabla\psi|^2\\
				&\quad-
				\dfrac{C}{\sigma_*^2}\Bigl(1+\frac1\varepsilon\Bigr)s^3\lambda^4\int_{\mathbb T^n \times (0,T)} \xi^3|\psi|^2.
			\end{aligned}
		\end{equation*} 
	\end{lemma}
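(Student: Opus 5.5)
We compute each of the four pieces $Q_{12},Q_{22},R_1$ explicitly by integration by parts in $x$ only (no boundary terms on $\mathbb T^n$, and no time derivatives appear, so nothing at $t=0,T$ is needed). Throughout we use $\nabla\xi=\lambda\xi\nabla\eta_0$ and $\eta_0\in C^4$. Every term carrying fewer than the maximal powers of $\lambda$ is a remainder, to be absorbed either by the factor $1/\lambda_*$ or, via Young's inequality with parameter $\varepsilon$, by $\sigma_*^{-2}$ and the bound \eqref{ineq. 1/s xi }.

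\textbf{Term $Q_{12}$.} We have $Q_{12}=-2s\lambda^2\,\textup{Re}\int_Q\xi|\nabla\eta_0|^2\psi\,\overline{\Delta\psi}$. Integrating by parts gives
\[
2s\lambda^2\int_Q\xi|\nabla\eta_0|^2|\nabla\psi|^2+2s\lambda^2\,\textup{Re}\int_Q\nabla(\xi|\nabla\eta_0|^2)\cdot\nabla\psi\,\overline\psi .
\]
The first integral is nonnegative and we keep it, or simply drop it. In the second we write $\textup{Re}(\nabla\psi\,\overline\psi)=\tfrac12\nabla|\psi|^2$ and integrate by parts once more, which produces $-s\lambda^2\int_Q\Delta(\xi|\nabla\eta_0|^2)|\psi|^2$. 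Its size is $\lesssim s\lambda^4\xi|\psi|^2$. Since $(s\xi)^{-2}\lesssim\sigma_*^{-2}$ by \eqref{ineq. 1/s xi }, this is bounded by $\sigma_*^{-2}s^3\lambda^4\xi^3|\psi|^2$.

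\textbf{Term $R_1$.} The same procedure applies: $R_1=2s\lambda\int_Q\nu|\nabla\psi|^2+s\lambda\int_Q\nabla\nu\cdot\nabla|\psi|^2$. The piece $-2s\lambda^2\int_Q\xi|\nabla\eta_0|^2|\nabla\psi|^2$ exactly cancels the positive piece $2\cdot 2s\lambda^2\int_Q\xi|\nabla\eta_0|^2|\nabla\psi|^2$ coming from $2Q_{12}$, up to the leftover $2s\lambda^2\int_Q\xi|\nabla\eta_0|^2|\nabla\psi|^2\ge0$. The piece $2s\lambda\int_Q\xi\Delta\eta_0|\nabla\psi|^2$ is bounded by $\frac{C}{\lambda_*}s\lambda^2\int_Q\xi|\nabla\psi|^2$. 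The term in $\nabla\nu$ is handled by integrating by parts again, giving $\lesssim s\lambda^4\xi|\psi|^2$, which is absorbed as before. This is what I will check most carefully; if the signs do not close, I will drop the nonnegative leftover and use only the $1/\lambda_*$ bound.

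\textbf{Term $Q_{22}$, the main one.} We have $Q_{22}=-2s\lambda\,\textup{Re}\int_Q\xi(\nabla\eta_0\cdot\nabla\psi)\overline{\Delta\psi}$. Write $a:=\xi\nabla\eta_0$. Integrating by parts gives
\[
2s\lambda\,\textup{Re}\int_Q\partial_j\psi\,\partial_j(a_k\partial_k\overline\psi)
=2s\lambda\int_Q(\partial_ja_k)\,\textup{Re}(\partial_j\psi\,\partial_k\overline\psi)+s\lambda\int_Q a_k\,\partial_k|\nabla\psi|^2 .
\]
Now $\partial_ja_k=\lambda\xi\,\partial_j\eta_0\partial_k\eta_0+\xi\,\partial_{jk}\eta_0$. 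The first part yields the main positive term $2s\lambda^2\int_Q\xi|\nabla\eta_0\cdot\nabla\psi|^2$, so $2Q_{22}$ contributes the $4s\lambda^2$ term in the statement. The $\partial_{jk}\eta_0$ part is bounded by $\frac{C}{\lambda_*}s\lambda^2\int_Q\xi|\nabla\psi|^2$. The last integral equals $-s\lambda\int_Q(\nabla\cdot a)|\nabla\psi|^2=-s\lambda\int_Q\xi(\lambda|\nabla\eta_0|^2+\Delta\eta_0)|\nabla\psi|^2$. Its $\lambda^2$ part, $-2s\lambda^2\int_Q\xi|\nabla\eta_0|^2|\nabla\psi|^2$ after doubling, is exactly cancelled by the leftover nonnegative term found above. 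The remainder is again of size $1/\lambda_*$.

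\textbf{Main obstacle and conclusion.} The delicate point is the bookkeeping of the three $s\lambda^2\xi|\nabla\eta_0|^2|\nabla\psi|^2$ contributions, from $Q_{12}$, $R_1$ and $Q_{22}$, so that their net sign is nonnegative; I expect them to cancel exactly. Any mixed first-order term $s\lambda^k\xi|\nabla\psi||\psi|$ that survives is split by Young as $\varepsilon s\lambda^2\xi|\nabla\psi|^2+\varepsilon^{-1}s\lambda^{2k-2}\xi|\psi|^2$. The second piece is converted to $\sigma_*^{-2}\varepsilon^{-1}s^3\lambda^4\xi^3|\psi|^2$ via \eqref{ineq. 1/s xi }, using $k\le3$. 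Summing all these bounds yields the claimed inequality.
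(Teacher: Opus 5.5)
Your argument is correct and follows the paper's proof. You use the same spatial integrations by parts of $Q_{12}$, $Q_{22}$, $R_1$, and the coefficients $+4$, $-2$, $-2$ of $s\lambda^2\int\xi|\nabla\eta_0|^2|\nabla\psi|^2$ cancel exactly, so the positive term from $2Q_{12}$ must be kept, not dropped as your hedges contemplate. The Hessian and $\Delta\eta_0$ terms are bounded via $\lambda\ge\lambda_*$, and the zeroth-order terms are converted using $(s\xi)^{-2}\lesssim\sigma_*^{-2}$.

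The main deviation is in the mixed term $\textup{Re}\int\nabla(\xi|\nabla\eta_0|^2)\cdot\nabla\psi\,\overline{\psi}$: you integrate it by parts once more, where the paper applies Young's inequality. You also bound the two $s\lambda\int\xi\Delta\eta_0|\nabla\psi|^2$ terms separately, where the paper notes they cancel. Your route gives the estimate with no $\varepsilon$-terms at all, hence a fortiori for every $\varepsilon>0$.
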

	
	\begin{proof}		\noindent\textbf{Step 1. Computation of \(R_1\):}
		Since \(\nu\) is real-valued, integration by parts yields
		\[
		\begin{aligned}
			R_1
			=
			2s\lambda\,\textup{Re}\left(\int_Q \nabla(\nu\psi)\cdot \nabla\overline{\psi} \right) 
			=
			2s\lambda\int_Q \nu\,|\nabla\psi|^2 
			+
			2s\lambda\,\textup{Re}\left(\int_Q \psi\,\nabla \nu\cdot\nabla\overline{\psi} \right).
		\end{aligned}
		\]
		Now $
		2\textup{Re}\bigl(\psi\,\nabla \nu\cdot\nabla\overline{\psi}\bigr)
		=
		\nabla \nu\cdot \nabla(|\psi|^2)$, hence, integrating by parts once more on \(\mathbb T^n\),
		\[
		2\textup{Re}\left(\int_Q \psi\,\nabla \nu\cdot\nabla\overline{\psi}\right) 
		=
		-\int_Q (\Delta  \nu)\,|\psi|^2.
		\]
		Therefore, the definition of $\nu$ in \eqref{def nu} implies
		\begin{align*}
			R_1
			&=
			2s\lambda\int_Q \nu\,|\nabla\psi|^2
			-
			s\lambda\int_Q (\Delta  \nu)\,|\psi|^2\nonumber\\
			&=-2s\lambda^2\int_Q \xi|\nabla\eta_0|^2|\nabla\psi|^2
			+
			2s\lambda\int_Q \xi\,(\Delta\eta_0)\,|\nabla\psi|^2
			-
			s\lambda\int_Q (\Delta\nu)\,|\psi|^2.
		\end{align*}
		\noindent
		\textbf{Step 2. Estimate of $Q_{12}+Q_{22}$:} We begin with
		\[
		Q_{12}
		=
		-2s\lambda^2\textup{Re}\left(\int_Q |\nabla\eta_0|^2\xi\,\psi\,\overline{\Delta\psi}\right).
		\]
		Integrating by parts on $\mathbb T^n$ gives
		\[
		Q_{12}
		=
		2s\lambda^2\int_Q |\nabla\eta_0|^2\xi\,|\nabla\psi|^2\,
		+
		2s\lambda^2\textup{Re}\left[\int_Q \nabla \bigl(|\nabla\eta_0|^2\xi\bigr)\cdot\nabla\psi\,\overline{\psi}\,\right].
		\]
		And
		\[
		Q_{22}
		=
		-2s\lambda\textup{Re}\left[\int_Q \xi(\nabla\eta_0\cdot\nabla\psi)\,\overline{\Delta\psi}\right].
		\]
		Writing this in components and integrating by parts,
		\[
		\begin{aligned}
			Q_{22}
			&=
			2s\lambda\textup{Re}\left[
			\int_Q \partial_k \bigl(\xi\,\partial_j\eta_0\,\partial_j\psi\bigr)\,
			\overline{\partial_k\psi}\right] \\
			&=
			2s\lambda\int_Q  \xi\,\partial_{jk}\eta_0\,\partial_j\psi\,
			\overline{\partial_k\psi}
			+
			2s\lambda^2\int_Q \xi\,|\nabla\eta_0\cdot\nabla\psi|^2
			+
			2s\lambda\textup{Re}\left(
			\int_Q \xi\,\partial_j\eta_0\,\partial_{jk}\psi\,
			\overline{\partial_k\psi}\right).
		\end{aligned}
		\]
		For the last term, we observe that
		\[
		2\textup{Re}\left( \partial_j\eta_0\,\partial_{jk}\psi\,
		\overline{\partial_k\psi}\,\right) 
		=
		\nabla\eta_0\cdot\nabla(|\nabla\psi|^2),
		\]
		hence we can integrate by parts once more on $\mathbb T^n$ to obtain,
		\[
		2s\lambda\textup{Re}\left(
		\int_Q \xi\,\partial_j\eta_0\,\partial_{jk}\psi\,
		\overline{\partial_k\psi}\right)  
		=
		-s\lambda^2\int_Q \xi |\nabla\eta_0|^2 |\nabla\psi|^2
		-
		s\lambda\int_Q \xi\,(\Delta\eta_0)\,|\nabla\psi|^2.
		\] 
		Thus, we have
		\[
		\begin{aligned}
			Q_{22}
			&=
			2s\lambda\int_Q  \xi\,\partial_{jk}\eta_0\,\partial_j\psi\,
			\overline{\partial_k\psi}
			+
			2s\lambda^2\int_Q \xi\,|\nabla\eta_0\cdot\nabla\psi|^2 \\
			&\qquad
			-
			s\lambda^2\int_Q \xi |\nabla\eta_0|^2 |\nabla\psi|^2
			-
			s\lambda\int_Q \xi\,(\Delta\eta_0)\,|\nabla\psi|^2.
		\end{aligned}
		\] 
		\noindent
		\textbf{Step 3. Summing up :}
		Adding the identities for $2Q_{12}$, $2Q_{22}$ and $R_1$, we obtain, 
		\begin{equation}\label{eq:exact-combined-gradient}
			\begin{aligned}
				2(Q_{12}+Q_{22})+R_1
				&=
				4s\lambda^2\int_Q \xi\,|\nabla\eta_0\cdot\nabla\psi|^2 
				+
				4s\lambda\int_Q \xi\,\partial_{jk}\eta_0\,\partial_j\psi\,
				\overline{\partial_k\psi}\\
				&\qquad
				+
				4s\lambda^2\textup{Re}\left(\int_Q \nabla \bigl(|\nabla\eta_0|^2\xi\bigr)\cdot\nabla\psi\,\overline{\psi} \right)
				-
				s\lambda\int_Q (\Delta\nu)\,|\psi|^2  .
			\end{aligned}
		\end{equation} 
		We now estimate the last two terms in \eqref{eq:exact-combined-gradient}.
		Since \(\eta_0\in C^4(\mathbb T^n;\mathbb{R}_+)\) and \(\nabla\xi=\lambda\xi\nabla\eta_0\), we have
		\[
		\left|\nabla\bigl(|\nabla\eta_0|^2\xi\bigr)\right|
		\le
		|\nabla(|\nabla\eta_0|^2)|\,\xi
		+
		|\nabla\eta_0|^2|\nabla\xi|
		\lesssim_{\,\eta_0}
		\lambda \xi .
		\]
		Hence, by Young's inequality, for every \(\varepsilon>0\),
		\begin{equation}\label{3326}
			\begin{aligned}
				\left|
				4s\lambda^2\textup{Re}\left(\int_Q \nabla \bigl(|\nabla\eta_0|^2\xi\bigr)\cdot\nabla\psi\,\overline{\psi} \right)
				\right| 
				&\lesssim_{\,\eta_0}
				s\lambda^3\int_Q \xi |\nabla\psi|\,|\psi|  \\
				&\lesssim_{\,\eta_0}
				\varepsilon s\lambda^2\int_Q \xi|\nabla\psi|^2 
				+
				\frac{1}{\varepsilon}s\lambda^4\int_Q \xi|\psi|^2  .
			\end{aligned}
		\end{equation} 
		We now estimate \(\Delta  \nu\) which can be expressed by
		\[
		\Delta  \nu
		=
		(\Delta\xi)\bigl(\Delta\eta_0-\lambda|\nabla\eta_0|^2\bigr)
		+
		2\nabla\xi\cdot\nabla\bigl(\Delta\eta_0-\lambda|\nabla\eta_0|^2\bigr)
		+
		\xi\,\Delta\bigl(\Delta\eta_0-\lambda|\nabla\eta_0|^2\bigr).
		\]
		Using $
		\nabla\xi=\lambda\,\xi\,\nabla\eta_0$ and $
		\Delta\xi=\lambda^2\xi|\nabla\eta_0|^2+\lambda\xi\Delta\eta_0,$
		we have
		\[
		|\nabla\xi|\lesssim_{\,\eta_0} \lambda\xi,
		\qquad
		|\Delta\xi|\lesssim_{\,\eta_0} (\lambda+\lambda^2)\xi.
		\]
		Because \(\eta_0\in C^4(\mathbb T^n;\mathbb{R}_+)\), so
		\[
		\bigl|\nabla(\Delta\eta_0-\lambda|\nabla\eta_0|^2)\bigr|
		\lesssim_{\,\eta_0} 1+\lambda,
		\qquad
		\bigl|\Delta(\Delta\eta_0-\lambda|\nabla\eta_0|^2)\bigr|
		\lesssim_{\,\eta_0} 1+\lambda.
		\]
		Consequently, $
		|\Delta\nu| \lesssim_{\,\eta_0} \lambda^3\xi,$ and therefore
		\begin{align}\label{3368}
			-s\lambda\int_Q (\Delta\nu)|\psi|^2 
			\ge
			-C_{\eta_0} s\lambda^4\int_Q \xi|\psi|^2 .
		\end{align}
		
		Substituting the two bounds in \eqref{3326} and \eqref{3368} into \eqref{eq:exact-combined-gradient}, we get
		\begin{equation*}
			\begin{aligned}
				2(Q_{12}+Q_{22})+R_1
				&\ge
				4s\lambda^2\int_Q \xi\,|\nabla\eta_0\cdot\nabla\psi|^2  
				+
				4s\lambda\int_Q \xi\,\partial_{jk}\eta_0\,\partial_j\psi\,
				\overline{\partial_k\psi}  \\
				&\qquad
				-
				\varepsilon s\lambda^2\int_Q \xi|\nabla\psi|^2 
				-
				C_{\eta_0}\Bigl(1+\frac1\varepsilon\Bigr)s\lambda^4\int_Q \xi|\psi|^2 .
			\end{aligned}
		\end{equation*}
		And then using \eqref{ineq. 1/s xi } and $\lambda\geq\lambda_*$ give 
		\begin{align*}
			2(Q_{12}+Q_{22})+R_1
			&\ge
			4s\lambda^2\int_Q \xi\,|\nabla\eta_0\cdot\nabla\psi|^2 
			- 
			C_{\eta_0}\left( \varepsilon+\dfrac{1}{\lambda_*}\right)  s\lambda^2\int_Q \xi|\nabla\psi|^2 \\
			&\quad-
			\dfrac{C_{\eta_0}}{\sigma_*^2}\Bigl(1+\frac1\varepsilon\Bigr)s^3\lambda^4\int_Q \xi^3|\psi|^2 .
		\end{align*} 
	\end{proof}  
	However, the terms $s\lambda^2\int_Q \xi|\nabla\psi|^2 $ in Lemmas \ref{lem ineq of  dt z + Delta z}, \ref{lem lower order} and \ref{lem Q12+Q22+R1} are not localised. We do not have similar term in \Cref{lem Q11+Q21+R2} to cancel it. So we use the following lemma to control it by $\int_Q (s\xi)^{-1}|\Delta\psi|^2$ and $s^3\lambda^4\int_Q \xi^3|\psi|^2 $.


	
	\begin{lemma}\label{lem:gradient-from-laplacian}
	Assume the hypotheses of \Cref{lem Carleman est principal ad eq}. There exists a constant
		\(C=C(\eta_0)>0\) such that for every $\varepsilon>0$, $\sigma_*\geq 1$, \(\lambda\ge 1\) and $ s\ge \sigma_\ast(e^{2\lambda\|\eta_0\|_\infty}T+T^2)$, it holds that
		\begin{equation*}
			s\lambda^2\int_{\mathbb T^n \times (0,T)}\xi |\nabla\psi|^2
			\le
			\varepsilon\int_{\mathbb T^n \times (0,T)} (s\xi)^{-1}|\Delta\psi|^2
			+
			C\left( \frac{1}{\varepsilon}+\dfrac{1}{\sigma_*^2}\right)  s^3\lambda^4\int_{\mathbb T^n \times (0,T)} \xi^3|\psi|^2.
		\end{equation*}
	\end{lemma}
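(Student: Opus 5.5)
The plan is to integrate by parts once in space, so that the weighted gradient term becomes a pairing of $\psi$ against $\Delta\psi$ plus a commutator term coming from the weight, and then to split that pairing by a weighted Cauchy--Schwarz/Young inequality. Since $\mathbb T^n$ has no boundary and $\psi(\cdot,t)\in H^2$ for a.e.\ $t$, we have for a.e.\ $t$
\[
\int_{\mathbb T^n}\xi|\nabla\psi|^2
=-\textup{Re}\int_{\mathbb T^n}\xi\,\psi\,\overline{\Delta\psi}
-\textup{Re}\int_{\mathbb T^n}\psi\,\nabla\xi\cdot\nabla\overline{\psi}.
\]
For the second term we use $2\textup{Re}(\psi\nabla\overline\psi)=\nabla|\psi|^2$ and integrate by parts again, which turns it into $\frac12\int_{\mathbb T^n}(\Delta\xi)|\psi|^2$. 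As in the proof of \Cref{lem Q12+Q22+R1}, $\Delta\xi=\lambda^2\xi|\nabla\eta_0|^2+\lambda\xi\Delta\eta_0$, so $|\Delta\xi|\lesssim_{\eta_0}\lambda^2\xi$. Multiplying by $s\lambda^2$ and integrating in time then gives
\[
s\lambda^2\int_Q\xi|\nabla\psi|^2\le s\lambda^2\int_Q\xi|\psi||\Delta\psi|+C_{\eta_0}s\lambda^4\int_Q\xi|\psi|^2 .
\]

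For the first term we write $s\lambda^2\xi|\psi||\Delta\psi|=\big((s\xi)^{-1/2}|\Delta\psi|\big)\big(s^{3/2}\lambda^2\xi^{3/2}|\psi|\big)$. Young's inequality with parameter $\varepsilon$ then bounds it by
\[
\varepsilon(s\xi)^{-1}|\Delta\psi|^2+\frac{1}{4\varepsilon}s^3\lambda^4\xi^3|\psi|^2,
\]
which is exactly the $\frac1\varepsilon$ part of the claim. For the remaining term, \eqref{ineq. 1/s xi } gives $(s\xi)^{-1}\le T^2/(4s)\lesssim 1/\sigma_*$, so $(s\xi)^{-2}\lesssim\sigma_*^{-2}$. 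Hence $s\lambda^4\xi|\psi|^2=(s\xi)^{-2}s^3\lambda^4\xi^3|\psi|^2\lesssim\sigma_*^{-2}s^3\lambda^4\xi^3|\psi|^2$, which produces the $\frac{1}{\sigma_*^2}$ contribution. Adding the two pieces gives the lemma with $C$ depending only on $\eta_0$; no lower bound on $\lambda$ beyond $\lambda\ge1$ is needed.

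The only delicate step is justifying the integrations by parts near $t=0$ and $t=T$, where $\xi$ blows up. The spatial integrations by parts are done at fixed $t\in(0,T)$, where $\xi(\cdot,t)$ is smooth in $x$, so they are legitimate. The weighted integrals over $Q$ are finite because $\psi=e^{-s\varphi}z$ carries the factor $e^{-s\varphi}$. This factor decays like $\exp(-c\,s/(t(T-t)))$, since $\varphi\ge (e^{2\lambda\|\eta_0\|_\infty}-e^{\lambda\|\eta_0\|_\infty})/(t(T-t))$, and it therefore dominates every polynomial power of $\xi$. Combined with $z\in L^2(H^1)$ and the strong second derivatives noted after \eqref{def psi}, this makes all terms integrable, so the time-integrated identity is valid.
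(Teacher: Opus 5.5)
Your proof is correct and follows the paper's argument: one spatial integration by parts, Young's inequality splitting $s\lambda^2\xi|\psi||\Delta\psi|$ as $(s\xi)^{-1/2}|\Delta\psi|\cdot s^{3/2}\lambda^2\xi^{3/2}|\psi|$, and then $(s\xi)^{-1}\lesssim\sigma_*^{-1}$ to convert $s\lambda^4\xi|\psi|^2$ into the $\sigma_*^{-2}$ term. The only difference is how you handle the cross term $\textup{Re}\int\psi\,\nabla\xi\cdot\nabla\overline{\psi}$: you integrate by parts a second time to get $\frac12\int(\Delta\xi)|\psi|^2$ with $|\Delta\xi|\lesssim_{\eta_0}\lambda^2\xi$, whereas the paper applies Young's inequality and absorbs part of $s\lambda^2\int_Q\xi|\nabla\psi|^2$ into the left-hand side; both give the same $C_{\eta_0}s\lambda^4\int_Q\xi|\psi|^2$ term, and yours avoids the absorption step.
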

	
	\begin{proof}
		We may integrate by parts to get 
		\begin{equation*} 
			\int_Q \xi |\nabla\psi|^2 
			=
			-\textup{Re}\left( \int_Q \xi\,\overline{\psi}\,\Delta\psi \right) 
			-
			\textup{Re}\left( \int_Q \nabla\xi\cdot\nabla\psi\,\overline{\psi}\right) .
		\end{equation*} 
		Multiplying by \(s\lambda^2\) and then using $
		\nabla\xi=\lambda \xi\nabla\eta_0$, we get
		\begin{equation}\label{eq:grad-lap-proof-3}
			s\lambda^2\int_Q \xi |\nabla\psi|^2 
			\le
			s\lambda^2\int_Q \xi |\psi|\,|\Delta\psi| 
			+
			C_{\eta_0}s\lambda^3\int_Q \xi |\nabla\psi|\,|\psi| .
		\end{equation}
		By Young's inequality, it follows that 
		\begin{equation}\label{eq:grad-lap-proof-4}
			s\lambda^2\int_Q \xi |\psi|\,|\Delta\psi| 
			\le
			\frac{\varepsilon}{2}\int_Q (s\xi)^{-1}|\Delta\psi|^2 
			+
			\frac{1}{2\varepsilon} s^3\lambda^4\int_Q \xi^3|\psi|^2 ,
		\end{equation}
		for $\varepsilon>0$. For the second term of the right-hand side in \eqref{eq:grad-lap-proof-3}, Young's inequality gives
		\begin{equation}\label{eq:grad-lap-proof-5}
			C_{\eta_0}s\lambda^3\int_Q \xi |\nabla\psi|\,|\psi| 
			\le
			\frac{\varepsilon'}{2} s\lambda^2\int_Q \xi |\nabla\psi|^2 
			+
			\dfrac{C_{\eta_0}}{\varepsilon'} s\lambda^4\int_Q \xi |\psi|^2
		\end{equation} 
		for $\varepsilon'>0$. Substituting \eqref{eq:grad-lap-proof-4} and \eqref{eq:grad-lap-proof-5} into \eqref{eq:grad-lap-proof-3}, we obtain
		\[
		\begin{aligned}
			s\lambda^2\int_Q \xi |\nabla\psi|^2 
			&\le
			\frac{\varepsilon}{2} \int_Q (s\xi)^{-1}|\Delta\psi|^2 
			+
			\frac{1}{2\varepsilon} s^3\lambda^4\int_Q \xi^3|\psi|^2  \\
			&\qquad
			+
			\frac{\varepsilon'}{2} s\lambda^2\int_Q \xi |\nabla\psi|^2 
			+
			\dfrac{C_{\eta_0}}{\varepsilon'} s\lambda^4\int_Q \xi |\psi|^2 .
		\end{aligned}
		\]
		Hence, for $\varepsilon'=1/2$, we have
		\begin{equation*}
			\frac12 s\lambda^2\int_Q \xi |\nabla\psi|^2 
			\le
			\frac{\varepsilon}{2}\int_Q (s\xi)^{-1}|\Delta\psi|^2 
			+
			\frac{1}{2\varepsilon} s^3\lambda^4\int_Q \xi^3|\psi|^2 
			+
			C_{\eta_0} s\lambda^4\int_Q \xi |\psi|^2 .
		\end{equation*}
		Applying \eqref{ineq. 1/s xi } to estimate the last term, we conclude
		$$ s\lambda^2\int_Q \xi |\nabla\psi|^2
		\le
		\varepsilon\int_Q (s\xi)^{-1}|\Delta\psi|^2
		+
		\frac{1}{\varepsilon} s^3\lambda^4\int_Q \xi^3|\psi|^2
		+
		\dfrac{C_{\eta_0}}{\sigma_*^2} s^3\lambda^4\int_Q \xi^3 |\psi|^2.$$
	\end{proof}
	
	\begin{proof}[\bf Proof \Cref{lem Carleman est principal ad eq}]
		
		\noindent{\bf Part 1. Estimate from \eqref{eq:basic-identity}:}
		We add the inequalities in Lemmas \ref{lem Q11+Q21+R2} and \ref{lem Q12+Q22+R1}. It is obtained from \eqref{eq. [A,B]psi,psi =} that 
		\begin{align*}
			\langle[\mathcal{A}_0,\mathcal{B}_0]\psi,\psi\rangle_{L^2(Q)}
			\geq\,& \left[ c_{\eta_0}-
			\dfrac{C_{\eta_0}}{\sigma_*^2}
			\Bigl(1+\frac1\varepsilon\Bigr)\right] \,s^3\lambda^4\int_Q \xi^3|\psi|^2 
			-
			C_{\eta_0}\,s^3\lambda^4\int_{\omega\times(0,T)} \xi^3|\psi|^2.\nonumber\\
			&
			- 
			C_{\eta_0}\left( \varepsilon+\dfrac{1}{\lambda_*}\right)  s\lambda^2\int_Q \xi|\nabla\psi|^2 
		\end{align*}
		Using \eqref{eq:basic-identity} and \eqref{378}, we obtain
		\[
		\begin{aligned}
			\|\mathcal S_\beta\psi\|_{L^2(Q)}^2
			+
			\|\mathcal K_\beta\psi\|_{L^2(Q)}^2
			+
			(\alpha^2+\gamma^2) \langle[\mathcal{A}_0,\mathcal{B}_0]\psi,\psi\rangle_{L^2(Q)}
			+
			I_{low}
			=
			\|e^{-s\varphi}G\|_{L^2(Q)}^2.
		\end{aligned}
		\]
		Combining \eqref{eq. [A,B]psi,psi =},
		\Cref{lem lower order,lem Q11+Q21+R2,lem Q12+Q22+R1}, we obtain
		\begin{equation}\label{eq:intermediate-main}
			\begin{aligned}
				&\|\mathcal S_\beta\psi\|_{L^2(Q)}^2
				+\|\mathcal K_\beta\psi\|_{L^2(Q)}^2
				+c_{\alpha,\eta_0,\gamma}\left[ 1-
				\dfrac{C_{\alpha,\eta_0,\gamma}}{\sigma_*^2}
				\Bigl(1+\frac1\varepsilon\Bigr)
				-\dfrac{C_{\alpha,\eta_0,\gamma}}{\lambda_*}  \right]\,s^3\lambda^4\int_Q \xi^3|\psi|^2 \\
				&\le
				C_{\alpha,\eta_0,\gamma}\left[
				\int_Q e^{-2s\varphi}|G|^2
				+
				\int_{\omega\times(0,T)} s^3\lambda^4\xi^3|\psi|^2
				\right]
				+
				C_{\alpha,\eta_0,\gamma}
				\left(
				\frac{1}{\sigma_*}+\varepsilon+\frac{1}{\lambda_*}
				\right)
				\int_Q s\lambda^2\xi\,|\nabla\psi|^2 .
			\end{aligned}
		\end{equation}

		By the definition in \eqref{def. A_0 and B_0}, we express
		\[
		\Delta\psi=-\mathcal A_0\psi-s^2|\nabla\varphi|^2\psi,
		\]
		and hence
		\[
		|\Delta\psi|^2
		\le
		2|\mathcal A_0\psi|^2
		+
		2s^4|\nabla\varphi|^4|\psi|^2.
		\]
		Using \(|\nabla\varphi|=\lambda\xi |\nabla\eta_0|\), it follows that
		\begin{equation}\label{eq:Delta-psi-by-A0}
			\int_Q (s\xi)^{-1}|\Delta\psi|^2
			\le
			2\int_Q (s\xi)^{-1}|\mathcal A_0\psi|^2
			+
			C_{\eta_0}\,s^3\lambda^4\int_Q \xi^3|\psi|^2.
		\end{equation} 
		Substituting \eqref{eq:Delta-psi-by-A0} and \eqref{eq:A0-final} into the estimate in \Cref{lem:gradient-from-laplacian}, we get
		\[
		\begin{aligned}
			\int_Q s\lambda^2\xi |\nabla\psi|^2
			&\le
			C_{\alpha,\eta_0,\gamma}\varepsilon'
			\left[
			\frac{1}{\sigma_*}\|\mathcal S_\beta\psi\|_{L^2(Q)}^2
			+
			\int_Q s\lambda^2\xi\,|\nabla\psi|^2
			+
			\left(1+\frac{1}{\sigma_*^2}\right)
			s^3\lambda^4\int_Q \xi^3|\psi|^2
			\right] \\
			&\qquad
			+
			C_{\eta_0}\left(\frac{1}{\varepsilon'}+\frac{1}{\sigma_*^2}\right)
			s^3\lambda^4\int_Q \xi^3|\psi|^2.
		\end{aligned}
		\] for some $\varepsilon'>0$. Choosing \(\varepsilon'>0\) small enough so that
		\(C_{\alpha,\eta_0,\gamma}\varepsilon'\le \frac12\), we infer
		\begin{equation}\label{eq:gradient-estimate-closed}
			\int_Q s\lambda^2\xi |\nabla\psi|^2
			\le
			\frac{C_{\alpha,\eta_0,\gamma}}{\sigma_*}\|\mathcal S_\beta\psi\|_{L^2(Q)}^2
			+
			C_{\alpha,\eta_0,\gamma}
			\left(1+\frac{1}{\sigma_*^2}\right)
			s^3\lambda^4\int_Q \xi^3|\psi|^2.
		\end{equation}
		Plugging \eqref{eq:gradient-estimate-closed} into
		\eqref{eq:intermediate-main}, we obtain
		\[
		\begin{aligned}
			&\left[1-\frac{C_{\alpha,\eta_0,\gamma}}{\sigma_*}
			\left(\frac{1}{\sigma_*}+\varepsilon+\frac{1}{\lambda_*}\right)\right]
			\|\mathcal S_\beta\psi\|_{L^2(Q)}^2
			+\|\mathcal K_\beta\psi\|_{L^2(Q)}^2 \\
			&\quad+
			c_{\alpha,\eta_0,\gamma}\left[  1-
			\dfrac{C_{\alpha,\eta_0,\gamma}}{\sigma_*^2}
			\Bigl(1+\frac1\varepsilon\Bigr)
			-\dfrac{C_{\alpha,\eta_0,\gamma}}{\lambda_*} 
			-
			C_{\alpha,\eta_0,\gamma}
			\left(\frac{1}{\sigma_*}+\varepsilon+\frac{1}{\lambda_*}\right)
			\left(1+\frac{1}{\sigma_*^2}\right)
			\right]
			s^3\lambda^4\int_Q \xi^3|\psi|^2 \\
			&\le
			C_{\alpha,\eta_0,\gamma}\left[
			\int_Q e^{-2s\varphi}|G|^2
			+
			\int_{\omega\times(0,T)} s^3\lambda^4\xi^3|\psi|^2
			\right].
		\end{aligned}
		\]
		Hence, choosing first \(\varepsilon>0\) small enough and then
		\(\sigma_*\ge 1\), \(\lambda_*\ge 1\) large enough, we deduce
		\begin{equation}\label{eq:intermediate-z-fixed}
			\begin{aligned}
				\|\mathcal S_\beta\psi\|_{L^2(Q)}^2
				+\|\mathcal K_\beta\psi\|_{L^2(Q)}^2
				+
				s^3\lambda^4\int_Q \xi^3|\psi|^2 
			\le
				C_{\alpha,\eta_0,\gamma}\left[
				\int_Q e^{-2s\varphi}|G|^2
				+
				\int_{\omega\times(0,T)} s^3\lambda^4\xi^3|\psi|^2
				\right].
			\end{aligned}
		\end{equation}  
		Adding the result in \Cref{lem ineq of  dt z + Delta z} to \eqref{eq:intermediate-z-fixed} and then applying \eqref{eq:gradient-estimate-closed}, we conclude that
		\begin{equation*}
			\begin{aligned}
				&\|\mathcal S_\beta\psi\|_{L^2(Q)}^2
				+\|\mathcal K_\beta\psi\|_{L^2(Q)}^2
				+
				\int_Q e^{-2s\varphi}(s\xi)^{-1}\left(|\partial_t z|^2+|\Delta z|^2\right)
				+
				s^3\lambda^4\int_Q \xi^3|\psi|^2 \\
				&\le
				C_{\alpha,\eta_0,\gamma}\left[
				\int_Q e^{-2s\varphi}|G|^2
				+
				\int_{\omega\times(0,T)} s^3\lambda^4\xi^3|\psi|^2
				\right].
			\end{aligned}
		\end{equation*}

	\end{proof}

	\subsection{Adjoint system with divergence terms}  
	
	We now extend the Carleman estimate obtained in \Cref{lem Carleman est principal ad eq} for the principal adjoint system to a more general backward equation with a right-hand side containing both a zeroth-order term and divergence terms. In the study of the full adjoint system, the lower-order terms can naturally be rewritten in this form. The divergence terms we considered here is only in $L^2(0,T;H^{-1}(\mathbb T^n;\mathbb R^2))$ in general, and therefore the principal estimate in \Cref{lem Carleman est principal ad eq} cannot be applied directly. Instead, we argue by duality argument based on the principal Carleman estimate. The divergence terms are then estimated through a weighted bound on an auxiliary function and its gradient.

	\begin{lemma}[\bf Carleman estimate with divergence right-hand side]
		\label{lem carleman-div}
			Let $T>0$ and $\omega\subseteq \mathbb T^n$ be a non-empty open set.
		Fix an open set $\omega_0 \subset\subset \omega$ (meaning $\overline{\omega_0} \subset \omega$) and $\eta_0\in C^4(\mathbb T^n;\mathbb{R}_+)$ such that
		$$
			\eta_0(x) >0 \h{10pt} \text{ in $ \mathbb T^n$ and} \h{10pt} 
			|\nabla \eta_0(x)|>0 \h{5pt}\text{ in $\mathbb T^n\setminus \omega_0$}.$$
		Let $F_0,F_1,\dots,F_n\in L^2(\mathbb T^n \times (0,T);\mathbb R^2)$ and $\widetilde{q}_T\in L^2(\mathbb T^n;\mathbb R^2)$.
		Let $
		\widetilde{q}\in L^2(0,T;H^1(\mathbb T^n;\mathbb R^2))
		\cap C([0,T];L^2(\mathbb T^n;\mathbb R^2))$ be the weak solution of
		\begin{equation}
			\label{eq:adj-div}
			\begin{cases}
				-\partial_t u-A^\top\Delta u=F_0+\displaystyle\sum_{i=1}^n \partial_i F_i
				&\text{in }\mathbb T^n \times (0,T);\\[1mm]
				u(\cdot,T)=\widetilde{q}_T &\text{in }\mathbb T^n.
			\end{cases}
		\end{equation}
		Then there exist $\lambda_*\ge 1$, $\sigma_*\geq 1$ and $\C{2}>0$, depending only on $\alpha$, $\eta_0$ and $\gamma$, such that for every $
		\lambda\ge \lambda_*$ and $s\ge \sigma_*\bigl(e^{2\lambda\|\eta_0\|_\infty}T+T^2\bigr)$, 
		one has
		\begin{equation}
			\label{eq:carleman-div}
			\begin{aligned}
				&\int_{\mathbb T^n \times (0,T)} e^{-2s\varphi}
				\Bigl(
				s^3\lambda^4\xi^3\,|\widetilde{q}\,|^2
				\Bigr)
				\\
				&\le
				\C{2}\Biggl(
				\int_{\mathbb T^n \times (0,T)}e^{-2s\varphi}|F_0|^2
				+
				s^2\lambda^2\sum_{i=1}^n
				\int_{\mathbb T^n \times (0,T)} e^{-2s\varphi}\xi^2 |F_i|^2
				+s^3\lambda^4
				\int_{\omega\times(0,T)} e^{-2s\varphi}\xi^3 |\widetilde{q}\,|^2
				\Biggr).
			\end{aligned}
		\end{equation}
	\end{lemma}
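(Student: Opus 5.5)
Following the paper's hint, I would argue by the Imanuvilov-type duality: build an auxiliary pair through a weighted $L^2$-minimisation that uses the principal estimate of \Cref{lem Carleman est principal ad eq}, then test \eqref{eq:adj-div} against it. The $F_i$-terms get absorbed through the weighted gradient bound on the auxiliary function.

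\textbf{Step 1 (auxiliary control problem).} Consider the forward problem dual to \eqref{eq:adj-div},
\[
\partial_t w - A\Delta w = s^3\lambda^4\xi^3 e^{-2s\varphi}\widetilde q + \chi_\omega h ,\qquad w(\cdot,0)=0,\ w(\cdot,T)=0 .
\]
I would obtain $(w,h)$ by minimising
\[
\tfrac12\int_Q e^{2s\varphi}|w|^2 + \tfrac12\int_{\omega\times(0,T)} e^{2s\varphi}(s\lambda^{4/3}\xi)^{-3}|h|^2
\]
over admissible pairs. The Euler--Lagrange system gives $w=e^{-2s\varphi}(-\partial_t p - A^\top\Delta p)$ and $h=-(s\lambda^{4/3}\xi)^3e^{-2s\varphi}p\,\chi_\omega$ for some adjoint $p$, and $p$ solves a principal backward system with right-hand side $e^{2s\varphi}w$. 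Apply \Cref{lem Carleman est principal ad eq} to $p$ with $F=e^{2s\varphi}w$, and pair the state equation with $p$. This yields
\[
\int_Q e^{2s\varphi}|w|^2+\int_{\omega\times(0,T)} e^{2s\varphi}s^{-3}\lambda^{-4}\xi^{-3}|h|^2\le C\int_Q e^{-2s\varphi}s^3\lambda^4\xi^3|\widetilde q|^2 .
\]
Existence of the minimiser and the final-time condition follow from coercivity, which the Carleman estimate provides; I would use the usual penalisation $\frac1{2\epsilon}\|w(T)\|^2$ and let $\epsilon\to0$ to avoid technicalities.

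\textbf{Step 2 (weighted energy estimate for $w$).} This is the main obstacle. I need
\[
\int_Q e^{2s\varphi}\bigl(s^{-2}\lambda^{-2}\xi^{-2}|\nabla w|^2 + s^{-2}\lambda^{-2}\xi^{-2}\cdot\text{(nothing else)}\bigr)\le C\int_Q e^{-2s\varphi}s^3\lambda^4\xi^3|\widetilde q|^2 .
\]
To get it, set $\theta:=e^{s\varphi}(s\xi)^{-1}\lambda^{-1}$ and test the equation for $\theta w$ against $\theta w$, using $(A\zeta)\cdot\zeta=\alpha|\zeta|^2$. The commutator terms involve $\partial_t\theta$ and $\nabla\theta$, which are bounded by $s\xi^2\theta$ (times $T$) and $s\lambda\xi\theta$. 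Their contributions are then controlled by $\int e^{2s\varphi}|w|^2$ from Step 1, because $\theta^2 s^2\xi^2\lambda^2\lesssim e^{2s\varphi}$. The source terms are handled by Young, pairing $\theta^2|s^3\lambda^4\xi^3e^{-2s\varphi}\widetilde q|\,|w|$ and $\theta^2|h||w|$ against the quantities already bounded. Since $\theta$ vanishes at $t=0,T$, there are no boundary terms. The delicate part is bookkeeping the powers of $s,\lambda,\xi$; the choice of $\theta$ above is made exactly so that $\theta^2\lesssim e^{2s\varphi}s^{-2}\lambda^{-2}\xi^{-2}$.

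\textbf{Step 3 (duality identity).} Pair \eqref{eq:adj-div} with $w$ and integrate by parts in $x$ and $t$; both $w(0)=w(T)=0$, so the terminal datum $\widetilde q_T$ drops out. This gives
\[
\int_Q s^3\lambda^4\xi^3e^{-2s\varphi}|\widetilde q|^2+\int_{\omega\times(0,T)}h\cdot\widetilde q=\int_Q F_0\cdot w-\sum_i\int_Q F_i\cdot\partial_i w .
\]
Each term on the right is bounded by Cauchy--Schwarz with the weights from Steps 1--2:
\[
\Bigl|\int F_i\cdot\partial_iw\Bigr|\le\Bigl(\int e^{-2s\varphi}s^2\lambda^2\xi^2|F_i|^2\Bigr)^{1/2}\Bigl(\int e^{2s\varphi}s^{-2}\lambda^{-2}\xi^{-2}|\nabla w|^2\Bigr)^{1/2}.
\]
The $F_0$ term is handled the same way using $\int e^{2s\varphi}|w|^2$, and the $h$-term gives the local integral $s^3\lambda^4\int_{\omega\times(0,T)}e^{-2s\varphi}\xi^3|\widetilde q|^2$. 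Absorbing $\bigl(\int e^{-2s\varphi}s^3\lambda^4\xi^3|\widetilde q|^2\bigr)^{1/2}$ via Young then gives \eqref{eq:carleman-div}. The constants depend only on $\alpha,\gamma,\eta_0$, with the same thresholds $\lambda_*$ and $\sigma_*$ as in \Cref{lem Carleman est principal ad eq}, possibly enlarged.
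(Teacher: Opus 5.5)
Your architecture matches the paper's. The optimality system of Step~1, $w=e^{-2s\varphi}L^*p$ and $h=-s^3\lambda^4\xi^3e^{-2s\varphi}p\,\chi_\omega$, is exactly the pair $(\zeta,\phi)$ that the paper extracts from the Riesz representative $p^*$. Your weight $\theta^2=s^{-2}\lambda^{-2}e^{2s\varphi}\xi^{-2}$ is the paper's gradient weight, and Step~3 is the paper's transposition identity followed by Cauchy--Schwarz.

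\textbf{The gap is in Step~2.} Its key justification is false: $\theta$ does \emph{not} vanish at $t=0,T$. Since $\varphi\ge(e^{2\lambda\|\eta_0\|_\infty}-e^{\lambda\|\eta_0\|_\infty})/(t(T-t))\to+\infty$ at both endpoints, $\theta=e^{s\varphi}/(s\lambda\xi)$ blows up like $\exp(cs/t)$ as $t\to0^+$, and likewise as $t\to T^-$. Only the factor $\xi^{-1}\le t(T-t)$ decays, and only polynomially. This has two consequences.
\begin{enumerate}
\item Testing with $\theta^2w$ on all of $(0,T)$ is not admissible, since $\theta w$ is not known to lie in $L^2(0,T;H^1)$.
\item Integrating $\frac12\frac{d}{dt}\|\theta w\|_{L^2}^2$ leaves the endpoint term $-\frac12\lim_{t\to0^+}\|\theta w(t)\|_{L^2}^2$. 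It enters with the bad sign and is an indeterminate $\infty\cdot0$, because all you know is $w(0)=0$.
\end{enumerate}

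This is exactly why the paper regularises the weights. Replacing $t(T-t)$ by $(t+\varepsilon)(T-t+\varepsilon)$ makes $e^{2s\varphi_\varepsilon}\xi_\varepsilon^{-2}$ bounded and smooth, so the test is legitimate and the boundary terms vanish by $w(0)=w(T)=0$. The comparison $e^{2s\varphi_\varepsilon}\xi_\varepsilon^{-m}\le e^{2s\varphi}\xi^{-m}$ for $m=2,4$ then bounds every right-hand side by the Step~1 quantities. This comparison holds once $s\varphi_\varepsilon\ge2$, which is where $\sigma_*$ gets enlarged. Fatou's lemma finally removes $\varepsilon$.

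Within your own scheme there is an equally good repair. Integrate over $(t_1,t_2)\subset\subset(0,T)$, drop the nonnegative term at $t_2$, and let $t_1\to0$ along a sequence with $\|\theta w(t_1)\|_{L^2}\to0$. Such a sequence exists because $\xi\ge1/(t(T-t))$ gives $\int_0^T(t(T-t))^{-2}\|\theta w(t)\|_{L^2}^2\,dt\le s^{-2}\lambda^{-2}\int_Qe^{2s\varphi}|w|^2<\infty$.

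\textbf{Three smaller points.}
\begin{enumerate}
\item Existence in Step~1 should be obtained on the dual side, as the paper does. Write $L^*=-\partial_t-A^\top\Delta$ and $\|p\|_{\mathcal X}^2=\int_Qe^{-2s\varphi}|L^*p|^2+s^3\lambda^4\int_{\omega\times(0,T)}e^{-2s\varphi}\xi^3|p|^2$. The principal Carleman estimate makes $\|\cdot\|_{\mathcal X}$ a norm on $C^\infty(\overline Q;\mathbb R^2)$, and makes $p\mapsto\frac12\|p\|_{\mathcal X}^2-s^3\lambda^4\int_Qe^{-2s\varphi}\xi^3\widetilde q\cdot p$ coercive on its completion. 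Penalising $\|w(T)\|^2$ does not help on the primal side: the cost weight $e^{2s\varphi}$ already blows up at $t=T$, so producing any admissible pair of finite cost is the whole difficulty.
\item For the same reason, the principal estimate cannot be applied to $p$ directly. The function $p$ need not have an $L^2$ terminal value, and $e^{2s\varphi}w$ need not lie in $L^2(Q)$. The estimate is applied to smooth functions and extended by density. Your Step~1 bound is then simply $\|p\|_{\mathcal X}^2\le C\,Y^{1/2}\|p\|_{\mathcal X}$ with $Y=s^3\lambda^4\int_Qe^{-2s\varphi}\xi^3|\widetilde q|^2$.
\item Your bound on $|\partial_t\theta|$ misses a factor $e^{2\lambda\|\eta_0\|_\infty}$, which comes from $|\partial_t\varphi|\lesssim Te^{2\lambda\|\eta_0\|_\infty}\xi^2$. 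This is harmless, since $s\ge\sigma_*e^{2\lambda\|\eta_0\|_\infty}T$.
\end{enumerate}
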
 
\begin{remark}\label{def weak sol of general adjoint}
	We say $
\widetilde{q}$ is the weak solution of \eqref{eq:adj-div} if 
\begin{enumerate}
	\item $
	\widetilde{q}\in L^2(0,T;H^1(\mathbb T^n;\mathbb R^2))
	\cap C([0,T];L^2(\mathbb T^n;\mathbb R^2))$ and the time derivative $\p_t \widetilde{q}$ in the distributional sense belongs to $L^2(0,T;(H^1)')$;
	
	\item $-\langle \p_t \widetilde{q},  \Psi\rangle_{((H^1)',H^1)} + \displaystyle\int_{\mathbb{T}^n}\nabla (A^\top\widetilde{q}\,) \cdot \nabla \Psi 
	= \int_{\mathbb{T}^n} F_0 \cdot \Psi-\sum_{i=1}^nF_i\cdot\p_i \Psi $ for a.e. $t \in (0,T)$ and any $\Psi \in H^1(\mathbb T^n;\mathbb R^2)$;
	
	\item $\widetilde{q}(\cdot,T)=\widetilde{q}_T(\cdot)$ a.e. in $\mathbb T^n$.
\end{enumerate}   
\end{remark}

		\begin{proof}  We set two operators
			\[
			L:=\partial_t -A \Delta ,
			\qquad
			L^*:=-\partial_t -A^\top \Delta .
			\]
		 Define the inner product space $(\mathcal{X},\langle\cdot,\cdot\rangle_{\mathcal{X}})$ with \( \mathcal{X}:=C^\infty (\overline Q;\mathbb R^2)\) and the inner product 
		 \[
		 \langle p^1,p^2 \rangle_{\mathcal{X}}
		 :=
		 \int_Q e^{-2s\varphi}(L^*p^1)\cdot (L^*p^2) 
		 +
		 s^3\lambda^4
		 \int_{\omega\times(0,T)} e^{-2s\varphi}\xi^3 p^1\cdot p^2 .
		 \] 
		 The norm is naturally induced by
		\begin{align}\label{2534}
			\|p\|_{\mathcal{X}}
			:=
			\left( \int_Q e^{-2s\varphi}|L^*p|^2 
			+
			s^3\lambda^4
			\int_{\omega\times(0,T)} e^{-2s\varphi}\xi^3 |p|^2\right)^{1/2}.
		\end{align} 
		Indeed, with Lemma~\ref{lem Carleman est principal ad eq} applied to any \(p \in C^\infty (\overline Q;\mathbb R^2)\), it yields
		\begin{equation}
			\label{eq:X-controls-carleman}
			\int_Q e^{-2s\varphi}
			\Bigl[
			(s\xi)^{-1}\bigl(|\p_t p|^2+|\Delta p|^2\bigr)
			+s^3\lambda^4\xi^3 |p|^2
			\Bigr] 
			\le
			\C{1}\|p\|_{\mathcal{X}}^2 .
		\end{equation}
		Therefore, \(\|\cdot\|_{\mathcal{X}}\) is a norm on \(\mathcal{X}=C^\infty (\overline Q;\mathbb R^2)\). Define the linear functional \(\mathcal L\) on \(\mathcal{X}=C^\infty (\overline Q;\mathbb R^2)\) by
		\[
		\mathcal L(p)
		:=
		s^3\lambda^4 \int_Q e^{-2s\varphi}\xi^3 \widetilde{q} \cdot p .
		\]
		By Cauchy--Schwarz inequality and \eqref{eq:X-controls-carleman},
		\[
		|\mathcal L(p)|
		\le
		s^3\lambda^4
		\Bigl(\int_Q e^{-2s\varphi}\xi^3 |\widetilde{q}\,|^2 \Bigr)^{1/2}
		\Bigl(\int_Q e^{-2s\varphi}\xi^3 |p|^2 \Bigr)^{1/2}
		\le
		\C{1}\,s^{3/2}\lambda^2
		\Bigl(\int_Q e^{-2s\varphi}\xi^3 |\widetilde{q}\,|^2 \Bigr)^{1/2}
		\|p\|_{\mathcal{X}}.
		\] 
		Let \(\overline{\mathcal{X}}\) be the completion of \(C^\infty (\overline Q;\mathbb R^2)\) for this norm. Thus \(\mathcal L\) is continuous on \((\mathcal{X},\|\cdot\|_{\mathcal{X}})\), and so it extends uniquely to a continuous linear functional
		\[
		\overline{\mathcal L}\in \overline{\mathcal{X}}^{\,\prime}
		\] since $\mathcal{X}$ is dense in $\overline{\mathcal{X}}$. By the Riesz representation theorem, there exists a unique \(p^*\in \overline{\mathcal{X}}\) such that
		\begin{equation}
			\label{eq:LM}
			\langle p^*,p\rangle_{\overline{\mathcal{X}}}=\overline{\mathcal L}(p) \h{15pt} \text{for all $p \in  \overline{\mathcal{X}}$}.
		\end{equation} 
		
		\noindent{\bf Part 1. Estimate of $p^*$:} By definition of the completion, there exists a Cauchy sequence \( \{p_k\}_{k\in \mathbb{N}} \subset \mathcal X\) such that
		\[
		p_k\to p^*
		\qquad\text{in }\overline{\mathcal X}.
		\]
	Hence there exist $
		\widetilde{\zeta}\in L^2(Q;\mathbb{R}^2)$ and $\widetilde{\phi}\in L^2(\omega\times(0,T);\mathbb{R}^2)$, such that
		\[
		e^{-s\varphi}L^*p_k \to \widetilde{\zeta}
		\qquad\text{in }L^2(Q;\mathbb{R}^2),
\quad\text{and}\quad
		e^{-s\varphi} s^{3/2}\lambda^2 \xi^{3/2}p_k
		\to \widetilde{\phi}
		\qquad\text{in }L^2(\omega\times(0,T);\mathbb{R}^2).
		\]
		We then define
		\begin{align*}
				\zeta:=e^{-s\varphi}\widetilde{\zeta}\in L^2(Q;\mathbb{R}^2),
		\qquad
		\phi:=-\,s^{3/2}\lambda^2 e^{-s\varphi}\xi^{3/2}\widetilde{\phi}
		\in L^2(\omega\times(0,T);\mathbb{R}^2).
		\end{align*} 
		Equivalently,
		\[
		e^{-2s\varphi}L^*p_k \to \zeta
		\qquad\text{in }L^2(Q;\mathbb{R}^2),
\quad \text{and}\quad
		-\,s^3\lambda^4 e^{-2s\varphi}\xi^3 p_k
		\to \phi
		\qquad\text{in }L^2(\omega\times(0,T);\mathbb{R}^2).
		\]
		
		Let \(\Psi\in \mathcal X\) and \(k\in \mathbb{N}\), one has
		\[
		\langle p_k,\Psi\rangle_{\overline{\mathcal X}}
		=
		\int_Q e^{-2s\varphi}(L^*p_k)\cdot (L^*\Psi)
		+
		s^3\lambda^4
		\int_{\omega\times(0,T)} e^{-2s\varphi}\xi^3 p_k\cdot \Psi.
		\]
		Passing to the limit as \(k\to\infty\), and using the convergences above, we obtain
		\[
		\langle p^*,\Psi\rangle_{\overline{\mathcal X}}
		=
		\int_Q \zeta\cdot L^*\Psi
		-
		\int_{\omega\times(0,T)} \phi\cdot \Psi.
		\]
		On the other hand, by the Riesz identity \eqref{eq:LM},
				\begin{equation}
			\label{eq:zv}
		\langle p^*,\Psi\rangle_{\overline{\mathcal X}}
		=
		\overline{\mathcal L}(\Psi)=
		\mathcal L(\Psi)
		=
		s^3\lambda^4\int_Q e^{-2s\varphi}\xi^3 \widetilde q\cdot \Psi
		=
		\int_Q \zeta\cdot L^*\Psi
		-
		\int_{\omega\times(0,T)} \phi\cdot \Psi.
	\end{equation}  
		 
		From \eqref{eq:zv}, we use Cauchy--Schwarz inequality and then \eqref{eq:X-controls-carleman} to observe
		\[
		\langle p^*,p_k\rangle_{\overline{\mathcal{X}}}
		\le
		\Bigl(s^3\lambda^4 \int_Q e^{-2s\varphi}\xi^3 |\widetilde{q}\,|^2 \Bigr)^{1/2}
		\Bigl(s^3\lambda^4 \int_Q e^{-2s\varphi}\xi^3 |p_k|^2 \Bigr)^{1/2} \leq \C{1}Y^{1/2}\|p_k\|_{ \mathcal{X}},
		\]	where
		\begin{align}\label{eq. def of Y}
			Y:=s^3\lambda^4 \int_Q e^{-2s\varphi}\xi^3 |\widetilde{q}|^2 .
		\end{align} 
		Taking $k \to \infty$ on both sides, we have 
				\[
		\| p^*\|_{\overline{\mathcal{X}}} \leq \C{1}Y^{1/2},
		\]
		On the other hand, the definition in \eqref{2534} and limiting arguments yield
				\begin{equation}
			\label{eq:z-v-first}
		\|p^*\|_{\overline{\mathcal{X}}}^2
		=
		\int_Q e^{2s\varphi}|\zeta|^2 
		+
		s^{-3}\lambda^{-4}
		\int_{\omega\times(0,T)} e^{2s\varphi}\xi^{-3}|\phi|^2 
		\leq \C{1}^2Y.
			\end{equation}

		\noindent\textbf{Part 2. Estimate of \(\nabla \zeta\):} At this stage, \eqref{eq:zv} indicates that the function $\zeta$ only belongs to $L^2(Q;\mathbb R^2)$ and solves
		\[
		\partial_t\zeta-A\Delta\zeta
		=
		s^3\lambda^4 e^{-2s\varphi}\xi^3\widetilde q+\chi_\omega\phi
		\quad\text{in }\mathcal D'(Q).
		\] 
        Since the right-hand side belongs to $L^2(Q;\mathbb R^2)$, standard $L^2$-regularity for parabolic systems (see also \Cref{rmk existence of solution to general linear eq.}) yields that $
		\zeta\in C\big([0,T];L^2(\mathbb T^n;\mathbb R^2)\big)\cap L^2\big(0,T;H^1(\mathbb T^n;\mathbb R^2)\big)$ 
        and $\zeta \in W^{2,1}_2(\mathbb{T}^n \times (\delta,T))$ for any $\delta\in(0,T)$. Hence the above equation is satisfied in strong sense. Therefore, we rewrite \eqref{eq:zv} as 
		\begin{equation}
			\label{eq:zv weak}
			s^3\lambda^4\int_Q e^{-2s\varphi}\xi^3 \widetilde q\cdot \Psi
			=
			\int_Q -\zeta\cdot \p_t \Psi+\nabla (A\zeta)\cdot \nabla\Psi
			-
			\int_{\omega\times(0,T)} \phi\cdot \Psi.
		\end{equation}  
		Integrating by parts with respect to time and choosing suitable $\Psi$ yield that
\begin{align*}
			\zeta(\cdot,0)=\zeta(\cdot,T)=0 \qquad \text{a.e. in  $\mathbb T^n$}.
\end{align*} 

		We next claim that
		\begin{equation}
			\label{eq:z-grad}
			s^{-2}\lambda^{-2}
			\int_Q e^{2s\varphi}\xi^{-2} |\nabla \zeta|^2 
			\le C_{\alpha,\eta_0,\gamma}  \,Y .
		\end{equation} 
		If \(Y=\infty\), there is nothing to prove. Hence we may assume \(Y<\infty\).
		In view of \eqref{eq:z-v-first}, this implies that
		\[
		e^{s\varphi}\zeta\in L^2(Q;\mathbb R^2),
		\qquad
		e^{s\varphi}\xi^{-3/2}\phi \in L^2(\omega\times(0,T);\mathbb R^2).
		\]  
For \(\varepsilon\in(0,(1 \wedge T)/2]\), define the regularized weights
\[
\xi_\varepsilon(x,t):=
\frac{e^{\lambda\eta_0(x)}}{(t+\varepsilon)(T-t+\varepsilon)},
\qquad
\varphi_\varepsilon(x,t):=
\frac{e^{2\lambda\|\eta_0\|_\infty}-e^{\lambda\eta_0(x)}}{(t+\varepsilon)(T-t+\varepsilon)},
\]
and $
w_\varepsilon:=s^{-2}\lambda^{-2}e^{2s\varphi_\varepsilon}\xi_\varepsilon^{-2} \in C^\infty(\overline Q)\cap L^\infty(Q)$.	By density arguments, we can set $\Psi = w_\varepsilon \zeta$ in \eqref{eq:zv weak} to obtain
\begin{align*}
	&\frac12 \int_Q w_\varepsilon\,\partial_t |\zeta|^2
	+
	\int_Q (A \nabla \zeta)\cdot\nabla( w_\varepsilon\zeta)\\
	&=
	s\lambda^2\int_Q
e^{2s\varphi_\varepsilon-2s\varphi}\xi^3\xi_\varepsilon^{-2}
\,\widetilde q\cdot \zeta
+
s^{-2}\lambda^{-2}
\int_{\omega\times(0,T)} e^{2s\varphi_\varepsilon}\xi_\varepsilon^{-2}\phi\cdot \zeta.
\end{align*} 
	Therefore, using the fact that $
	(A\nabla\zeta)\cdot\nabla\zeta
	=
	\alpha |\nabla\zeta|^2$, the above relation gives
\begin{align}
	\label{eq:energy-z-1-rig}
	&\alpha s^{-2}\lambda^{-2}
	\int_Q e^{2s\varphi_\varepsilon}\xi_\varepsilon^{-2} |\nabla \zeta|^2
	\nonumber\\
	&\lesssim
	s^{-2}\lambda^{-2}
	\int_Q \bigl|\partial_t(e^{2s\varphi_\varepsilon}\xi_\varepsilon^{-2})\bigr|\,|\zeta|^2
	+
	s^{-2}\lambda^{-2}
	\int_Q |A|\,\bigl|\nabla(e^{2s\varphi_\varepsilon}\xi_\varepsilon^{-2})\bigr|
	\,|\nabla \zeta|\,|\zeta|
	\nonumber\\
	&\quad
	+
	s\lambda^2\int_Q e^{2s\varphi_\varepsilon-2s\varphi}\xi^3\xi_\varepsilon^{-2}\, |\widetilde{q}|\,|\zeta|
	+
	s^{-2}\lambda^{-2}
	\int_{\omega\times(0,T)}e^{2s\varphi_\varepsilon}\xi_\varepsilon^{-2}
	|\phi|\,|\zeta|.
\end{align} 

 We claim that for  $s\ge \sigma_*\bigl(e^{2\lambda\|\eta_0\|_\infty}T+T^2\bigr)$ and $\sigma_*$ sufficiently large, it holds that
 \begin{equation}
 	\label{eq:reg-weight-comp}
 	e^{2s\varphi_\varepsilon}\xi_\varepsilon^{-2}\le e^{2s\varphi}\xi^{-2},
 	\qquad
 	e^{2s\varphi_\varepsilon}\xi_\varepsilon^{-4}\le e^{2s\varphi}\xi^{-4}.
 \end{equation}
	For every \((x,t)\in Q\), one has 
	\[
	\frac{\varphi(x,t)}{\varphi_\varepsilon(x,t)}
	=
	\frac{\xi(x,t)}{\xi_\varepsilon(x,t)}
	=
	\frac{(t+\varepsilon)(T-t+\varepsilon)}{t(T-t)}
	\ge 1.
	\]
	Therefore, for $m=2$ or $4$, we see that
	\[
	\log\!\left(
	\frac{e^{2s\varphi}\xi^{-m}}{e^{2s\varphi_\varepsilon}\xi_\varepsilon^{-m}}
	\right)
	=
	2s\bigl(\varphi-\varphi_\varepsilon\bigr)
	-m\log\!\left(\frac{\xi}{\xi_\varepsilon}\right)
		=
	2s\varphi_\varepsilon
	\left(
	\frac{\xi}{\xi_\varepsilon}-1
	\right)
	-m\log\!\left(\frac{\xi}{\xi_\varepsilon}\right).
	\] 
	Since \(\dfrac{\xi}{\xi_\varepsilon}\ge 1\), we use the elementary inequality $
	\log r\le r-1$ for all $r\ge 1$ to obtain
	\[
	\log\!\left(
	\frac{e^{2s\varphi}\xi^{-m}}{e^{2s\varphi_\varepsilon}\xi_\varepsilon^{-m}}
	\right)
	\ge
	\left(
	\frac{\xi}{\xi_\varepsilon}-1
	\right)
	\bigl(2s\varphi_\varepsilon-m\bigr).
	\]
	Choose $\varepsilon'>\varepsilon$ and $s\ge \sigma_*\bigl[e^{2\lambda\|\eta_0\|_\infty}T+T^2+(\varepsilon')^2\bigr]$, we compute
	\begin{align*}
		s\varphi_\varepsilon(x,t)
		=
		s\,
		\frac{e^{2\lambda\|\eta_0\|_\infty}-e^{\lambda\eta_0(x)}}
		{(t+\varepsilon)(T-t+\varepsilon)}
		\ge
		s\,
		\frac{e^{2\|\eta_0\|_\infty}-e^{\|\eta_0\|_\infty}}
		{\left(\frac{T}{2}+\varepsilon'\right)^2}
		\ge
		\sigma_*\bigl[T^2+(\varepsilon')^2\bigr]
		\frac{e^{2\|\eta_0\|_\infty}-e^{\|\eta_0\|_\infty}}
		{\left(\frac{T}{2}+\varepsilon'\right)^2}.
	\end{align*}
	Now choose $\sigma_*$ large enough, depending only on $\eta_0$, such that $
		s\varphi_\varepsilon(x,t) 
		\ge 2.$ Then we have  
	\[
	\log\!\left(
	\frac{e^{2s\varphi}\xi^{-m}}{e^{2s\varphi_\varepsilon}\xi_\varepsilon^{-m}}
	\right)\ge 0,
	\]
	which yields
	\[
	e^{2s\varphi_\varepsilon}\xi_\varepsilon^{-m}\le e^{2s\varphi}\xi^{-m}\qquad\text{for $m=2,4$.}
	\] 
	 
	 Putting \eqref{eq:reg-weight-comp} into the inequality \eqref{eq:energy-z-1-rig}, then
\begin{align}\label{eq:energy-eps-2}
	&\alpha s^{-2}\lambda^{-2}
	\int_Q e^{2s\varphi_\varepsilon}\xi_\varepsilon^{-2} |\nabla \zeta|^2
	\nonumber\\
	&\lesssim
	s^{-2}\lambda^{-2}
	\int_Q \bigl|\partial_t(e^{2s\varphi_\varepsilon}\xi_\varepsilon^{-2})\bigr|\,|\zeta|^2
	+
	s^{-2}\lambda^{-2}
	\int_Q |A|\,\bigl|\nabla(e^{2s\varphi_\varepsilon}\xi_\varepsilon^{-2})\bigr|
	\,|\nabla \zeta|\,|\zeta|
	\nonumber\\
	&\quad
	+
	s\lambda^2\int_Q\xi\, |\widetilde{q}|\,|\zeta|
	+
	s^{-2}\lambda^{-2}
	\int_{\omega\times(0,T)}e^{2s\varphi}\xi^{-2}
	|\phi|\,|\zeta|.
\end{align} 

Next, exactly as for the original weights, we have $\nabla \xi_\varepsilon=\lambda \xi_\varepsilon \nabla\eta_0$ and $\nabla \varphi_\varepsilon=-\lambda \xi_\varepsilon \nabla\eta_0$ and therefore, for every \(k\in\mathbb Z\),
\[
\nabla(e^{2s\varphi_\varepsilon}\xi_\varepsilon^k)
=
e^{2s\varphi_\varepsilon}\lambda \xi_\varepsilon^k \nabla\eta_0\,(-2s\xi_\varepsilon+k).
\]
Since we still have \((s\xi_\varepsilon)^{-1}\lesssim \sigma_*^{-1}\), hence
\begin{equation}
	\label{eq:reg-weight-grad}
	\bigl|\nabla(e^{2s\varphi_\varepsilon}\xi_\varepsilon^k)\bigr|
	\lesssim_{\eta_0}
	(1+|k|)\,s\lambda\,e^{2s\varphi_\varepsilon}\xi_\varepsilon^{k+1}.
\end{equation}
Similarly, the same computation gives
\begin{equation}
	\label{eq:reg-weight-time}
	\bigl|\partial_t(e^{2s\varphi_\varepsilon}\xi_\varepsilon^{-2})\bigr|
	\lesssim_{\eta_0}
	s^2\sigma_*^{-1}\,e^{2s\varphi_\varepsilon}.
\end{equation}

	Substituting \eqref{eq:reg-weight-grad} and \eqref{eq:reg-weight-time} into
\eqref{eq:energy-eps-2}, and using Young's inequality, we obtain
\begin{align*}
	&s^{-2}\lambda^{-2}
	\int_Q e^{2s\varphi_\varepsilon}\xi_\varepsilon^{-2} |\nabla \zeta|^2
	\nonumber\\
	&\lesssim_{\,\alpha,\eta_0,\gamma}
	(\lambda^{-2}\sigma_*^{-1}+1)
	\int_Q e^{2s\varphi}|\zeta|^2
	+
	s^2\lambda^4\int_Q e^{-2s\varphi}\xi^2 |\widetilde q|^2
	+
	s^{-4}\lambda^{-4}
	\int_{\omega\times(0,T)} e^{2s\varphi}\xi^{-4}|\phi|^2 .
\end{align*}
Using \eqref{ineq. 1/s xi } and \eqref{eq:z-v-first}, we have
\begin{align*}
	s^{-2}\lambda^{-2}
	\int_Q e^{2s\varphi_\varepsilon}\xi_\varepsilon^{-2} |\nabla \zeta|^2
\lesssim_{\,\alpha,\eta_0,\gamma}
\C{1}^2Y.
\end{align*}
Then, Fatou's lemma proves \eqref{eq:z-grad} for $s\ge \sigma_*\bigl[e^{2\lambda\|\eta_0\|_\infty}T+T^2\bigr]$ as $\varepsilon'>0$ is arbitrary. 
		 
\noindent
\textbf{Part 3. Transposition identity for \(\widetilde{q}\):} Combining \eqref{eq:z-v-first} and \eqref{eq:z-grad}, we conclude that
		\begin{equation}
			\label{eq:z-v-grad-final}
			\int_Q e^{2s\varphi}|\zeta|^2 
			+
			s^{-2}\lambda^{-2}\int_Q e^{2s\varphi}\xi^{-2} |\nabla \zeta|^2 
			+
			s^{-3}\lambda^{-4}
			\int_{\omega\times(0,T)} e^{2s\varphi}\xi^{-3}|\phi|^2 
			\lesssim_{\,\alpha,\eta_0,\gamma} Y.
		\end{equation}  
		We first multiply the equation $\partial_t\zeta-A\Delta\zeta
		=
		s^3\lambda^4 e^{-2s\varphi}\xi^3\widetilde q+\chi_\omega\phi$ in the strong sense with $\widetilde{q}$. And then we consider the weak formulation of \eqref{eq:adj-div} with the test function \(\zeta\). We subtract these two relations to obtain
		\begin{equation*}
			\int_Q \widetilde{q}\cdot (s^3\lambda^4 e^{-2s\varphi}\xi^3 \widetilde{q} + \phi\,\chi_\omega ) 
			=
			\int_Q F_0\cdot \zeta 
			-
			\sum_{i=1}^n \int_Q F_i\cdot \partial_i \zeta.   
		\end{equation*}  
 Therefore, the integral $Y$ defined in \eqref{eq. def of Y} can also be expressed as 
		\begin{equation*}
			Y
			=
			\int_Q F_0\cdot \zeta 
			-
			\sum_{i=1}^n \int_Q F_i\cdot \partial_i \zeta 
			-
			\int_{\omega\times(0,T)} \widetilde{q}\cdot \phi  .
		\end{equation*}
		By employing Cauchy--Schwarz inequality and then \eqref{eq:z-v-grad-final}, we infer that
		\begin{align*}
			Y\lesssim_{\,\alpha,\eta_0,\gamma} Y^{1/2}
			\Biggl[
			&\left(\int_Q e^{-2s\varphi}|F_0|^2 \right)^{1/2}\\
			&+
			s\lambda
			\sum_{i=1}^n
			\left(\int_Q e^{-2s\varphi}\xi^2 |F_i|^2 \right)^{1/2}
			+
			s^{3/2}\lambda^2
			\left(\int_{\omega\times(0,T)} e^{-2s\varphi}\xi^3 |\widetilde{q}|^2 \right)^{1/2}
			\Biggr].
		\end{align*}
		If $Y=0$, then \eqref{eq:carleman-div} is trivial. Otherwise divide by $Y^{1/2}$ and square.
		This yields exactly \eqref{eq:carleman-div}. 
			\end{proof}

	\section{Null Controllability of Linear Systems of Parabolic Equations}
	
	This section proves the null controllability in \Cref{thm null control of linear eq}. We first obtain the observability from the Carleman estimate in \Cref{lem carleman-div} and then use the duality argument to construct a $L^2$ control. Finally, the $L^\infty$ control can be constructed from some regularity estimate and cut-off arguments.
	
	\subsection{Observability} 
	
	The following proposition is the key observability estimate for the adjoint system. It says that the size of the adjoint solution at time \(0\) can be controlled only by observing the solution inside the set \(\omega\) during the time interval \((0,T)\). In other words, if an adjoint solution is non-trivial at time \(0\), then it must be detectable in the control region \(\omega\) at some time between \(0\) and \(T\). This is exactly the property needed for controllability: since every nontrivial adjoint solution must leave a detectable trace on \(\omega\), a control supported in \(\omega\) can drive the forward system.
	
	\begin{lemma}[Observability for the adjoint system]
		\label{prop:observability}
		Let $T>0$, $q_T^*\in L^2(\mathbb T^n;\mathbb R^2)$, $\omega\subseteq \mathbb T^n$ be a non-empty open set and $B_i,\Gamma\in L^\infty(\mathbb T^n \times (0,T);\mathbb R^{2\times 2})$. Then there exists a constant $\C{5}=\C{5}(T,\omega,\alpha,\gamma,\|B\|_{\infty},\|\Gamma\|_{\infty})>0$ 
		such that the weak solution $q^*$ of
		\begin{equation}
			\label{eq:adjoint}
			\begin{cases}
				-\p_t u -A^\top \Delta u+\displaystyle\sum_{i=1}^n \partial_i(B_i^\top u)-\Gamma^\top u=0
				&\text{in }\mathbb T^n \times (0,T),\\[1mm]
				u(\cdot,T)=q^*_T &\text{in }\mathbb T^n.
			\end{cases}
		\end{equation}
		satisfies
		\begin{equation}
			\label{eq:obs-standard}
			\|q^*(\cdot,0)\|_{L^2(\mathbb T^n)}^2
			\le
			\C{5}
			\int_{\omega\times(0,T)} |q^*|^2 .
		\end{equation}
        We can choose the constant $$\C{5}
=
\dfrac{C_{\alpha,\gamma,\omega}}{T}
\exp \Big[
\pig(C\|\Gamma\|_{\infty}+\alpha^{-1}\|B\|_{\infty}^2\pig)T
+
C_{\alpha,\gamma,\omega} \pig(1+\|B\|_{\infty}^2+\|\Gamma\|_{\infty}^2\pig)(C_{\alpha,\gamma,\omega}+T^{-1}) 
\Big],$$ where $C$ is a universal constant and $\|B\|_{\infty}:=\pig(\sum\|B_i\|_{\infty}^2\pigr)^{1/2}$.
	\end{lemma}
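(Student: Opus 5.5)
We plan to deduce the observability estimate from the Carleman estimate with divergence right-hand side in \Cref{lem carleman-div}, combined with a backward energy estimate for \eqref{eq:adjoint}. We rewrite \eqref{eq:adjoint} in the form \eqref{eq:adj-div} with
\[
F_0:=\Gamma^\top q^*,\qquad F_i:=-B_i^\top q^*,\quad i=1,\dots,n,
\]
so that $F_0,F_i\in L^2(Q;\mathbb R^2)$ and $q^*$ is a weak solution in the sense of \Cref{def weak sol of general adjoint}. Applying \eqref{eq:carleman-div} gives
\[
s^3\lambda^4\int_Q e^{-2s\varphi}\xi^3|q^*|^2\le \C{2}\Bigl(\|\Gamma\|_\infty^2\int_Q e^{-2s\varphi}|q^*|^2+s^2\lambda^2\|B\|_\infty^2\int_Q e^{-2s\varphi}\xi^2|q^*|^2+s^3\lambda^4\int_{\omega\times(0,T)}e^{-2s\varphi}\xi^3|q^*|^2\Bigr).
\]
We fix $\lambda=\lambda_*$. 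Since $\xi\ge 4/T^2$, the first two terms on the right are bounded by $\bigl(\|\Gamma\|_\infty^2 T^6 s^{-3}+\|B\|_\infty^2 T^2 s^{-1}\bigr)$ times the left-hand side. Taking
\[
s\ge \sigma\bigl(T+T^2+T^2\|\Gamma\|_\infty^{2/3}+T^2\|B\|_\infty^2\bigr)
\]
with $\sigma$ large, these terms can be absorbed. This yields a Carleman inequality for $q^*$ with only the local term on the right-hand side.

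We then restrict the left-hand integral to $t\in(T/4,3T/4)$, where $e^{-2s\varphi}\xi^3$ is bounded below by $c\,T^{-6}e^{-C s/T^2}$. On $(0,T)$ we bound the right-hand weight $e^{-2s\varphi}\xi^3$ above by $C T^{-6}$, using that $\sup_{r>0} r^3e^{-cr}$ is bounded. This gives
\[
\int_{T/4}^{3T/4}\!\!\int_{\mathbb T^n}|q^*|^2\le C e^{Cs/T^2}\int_{\omega\times(0,T)}|q^*|^2 .
\]

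Next we derive the backward energy estimate. We test \eqref{eq:adjoint} with $q^*$, using $(A^\top p)\cdot p=\alpha|p|^2$. We integrate the divergence term by parts and apply Young's inequality:
\[
\Bigl|\int B_i^\top q^*\cdot\partial_i q^*\Bigr|\le \frac{\alpha}{2}\|\nabla q^*\|^2+\frac{1}{2\alpha}\|B\|_\infty^2\|q^*\|^2 .
\]
This gives
\[
-\frac{d}{dt}\|q^*\|_{L^2}^2\le \bigl(C\|\Gamma\|_\infty+\alpha^{-1}\|B\|_\infty^2\bigr)\|q^*\|_{L^2}^2 .
\]
By Gronwall's inequality, $\|q^*(\cdot,0)\|^2\le e^{(C\|\Gamma\|_\infty+\alpha^{-1}\|B\|_\infty^2)T}\|q^*(\cdot,t)\|^2$ for every $t$. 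Averaging over $t\in(T/4,3T/4)$ produces the factor $2/T$ and, combined with the previous step, gives \eqref{eq:obs-standard}.

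Finally we track the constant. With the chosen $s$, the exponent $s/T^2$ has the form $C(1+\|B\|_\infty^2+\|\Gamma\|_\infty^{2/3})(C+T^{-1})$ after absorbing $e^{2\lambda_*\|\eta_0\|_\infty}$ into $C_{\alpha,\gamma,\omega}$, where $\eta_0$ is chosen depending only on $\omega$. Since $\|\Gamma\|_\infty^{2/3}\le 1+\|\Gamma\|_\infty^2$, this matches the stated form of $\C{5}$.

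The main obstacle is this absorption step. The $s$-threshold must scale correctly in $T$, $\|B\|_\infty$ and $\|\Gamma\|_\infty$ to give the claimed explicit constant, and the weight comparisons on $(T/4,3T/4)$ must produce exactly the factor $T^{-1}$ rather than worse powers of $T$. We will handle this by keeping the powers of $T$ explicit from \eqref{ineq. 1/s xi } and the bounds on $\xi$ on the middle interval. Any residual polynomial factors in $T$ are bounded by exponentials and absorbed into $e^{C(C+T^{-1})}$.
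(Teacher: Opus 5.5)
Your proposal is correct and follows essentially the same route as the paper. The steps match: apply the divergence-form Carleman estimate with $F_0=\Gamma^\top q^*$, $F_i=-B_i^\top q^*$; absorb the lower-order terms by enlarging $s$; compare the weights on $(T/4,3T/4)$ against $(0,T)$, where the $T^{-6}$ factors cancel; and use a backward Gr\"onwall energy estimate averaged over the middle interval to get the $2/T$ factor. The only difference is minor: you take the absorption threshold $s\gtrsim T^2(\|\Gamma\|_\infty^{2/3}+\|B\|_\infty^2)$ directly, which is slightly sharper than the paper's $\sigma_*\gtrsim 1+\|B\|_\infty^2+\|\Gamma\|_\infty^2$, and then relax it to the stated form of the constant.
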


	
	\begin{proof} In \Cref{lem carleman-div}, we choose $F_0=\Gamma^\top q^*$ and  $F_i=-B_i^\top q^* $. Hence, we obtain
		\begin{align}
			\label{eq:obs-preabsorb}
			&\int_Q e^{-2s\varphi}
			\Bigl(
			s^3\lambda^4\xi^3 |q^*|^2
			\Bigr) 
			\nonumber\\
			&\le
			\C{2}\Biggl(
			\|\Gamma\|_{\infty}^2 \int_Q e^{-2s\varphi}|q^*|^2 
			+
			s^2\lambda^2 \|B\|_{\infty}^2 \int_Q e^{-2s\varphi}\xi^2 |q^*|^2  
			+
			s^3\lambda^4
			\int_{\omega\times(0,T)} e^{-2s\varphi}\xi^3 |q^*|^2 
			\Biggr),
		\end{align}
for every $
		\lambda\ge \lambda_*$ and $s\ge \sigma_*\bigl(e^{2\lambda\|\eta_0\|_\infty}T+T^2\bigr)$. By \eqref{ineq. 1/s xi }, we can further enlarge $\sigma_*$, depending on $\alpha$, $\eta_0$, $\gamma$, $\|B\|_{\infty}$ and $\|\Gamma\|_{\infty}$, that the first two terms in the right-hand side of
		\eqref{eq:obs-preabsorb} can be absorbed by the left-hand side. Hence
		\begin{equation}
			\label{eq:obs-carleman}
			\int_Q e^{-2s\varphi}\xi^3 |q^*|^2 
			\lesssim_{\,\alpha,\eta_0,\gamma}
			\int_{\omega\times(0,T)} e^{-2s\varphi}\xi^3 |q^*|^2
		\end{equation}
		for any \(s\ge \sigma_*(e^{2\lambda \|\eta_0\|_\infty}T+T^2)\), $\lambda\geq \lambda_*$ and $\sigma_* \gtrsim_{\,\alpha,\eta_0,\gamma} 1+\|B\|_{\infty}^2+\|\Gamma\|_{\infty}^2$.
		
		We now compare the values of the weights.
		Since \(t\mapsto t(T-t)\) attains its maximum value at \(t=T/2\), and bounded from below by
		\(3T^2/16\) on \([T/4,3T/4]\), we have
		\[
		M_\varphi:=\sup_{(x,t)\in \mathbb T^n\times(T/4,3T/4)} \varphi(x,t)
		\le
		\frac{16}{3T^2}\,e^{2\lambda\|\eta_0\|_\infty},
		\]
		while
		\[
		m_\varphi := \inf_{(x,t)\in Q} \varphi(x,t) 
		=
		\frac{4}{T^2}\Bigl(e^{2\lambda\|\eta_0\|_\infty}
		-e^{\lambda\|\eta_0\|_\infty}\Bigr).
		\]
		Likewise,
		\[
		\inf_{(x,t)\in \mathbb T^n\times(T/4,3T/4)} \xi(x,t)\gtrsim \frac{1}{T^2}.
		\]
We express
\begin{align*}
e^{-2s\varphi(x,t)}\xi(x,t)^3
&=
\frac{e^{3\lambda\eta_0(x)}}{[t(T-t)]^3}
\exp \left(
-\frac{2s\bigl(e^{2\lambda\|\eta_0\|_\infty}-e^{\lambda\eta_0(x)}\bigr)}{t(T-t)}
\right)\\
&\le
e^{3\lambda\|\eta_0\|_\infty}
\frac{1}{[t(T-t)]^3}
\exp \left(
-\frac{2s\bigl(e^{2\lambda\|\eta_0\|_\infty}-e^{\lambda\|\eta_0\|_\infty}\bigr)}{t(T-t)}
\right).
\end{align*} 
Now we use the elementary inequality $
r^3e^{-r}\le 27e^{-3}$ for all $r\ge 0$ to get
\[
\begin{aligned}
&\frac{1}{[t(T-t)]^3}
\exp \left(
-\frac{2s\bigl(e^{2\lambda\|\eta_0\|_\infty}-e^{\lambda\|\eta_0\|_\infty}\bigr)}{t(T-t)}
\right)
\\
&=
\frac{1}{
\bigl[2s\bigl(e^{2\lambda\|\eta_0\|_\infty}-e^{\lambda\|\eta_0\|_\infty}\bigr)\bigr]^3}
\left[
\frac{2s\bigl(e^{2\lambda\|\eta_0\|_\infty}-e^{\lambda\|\eta_0\|_\infty}\bigr)}{t(T-t)}
\right]^3
\exp \left(
-\frac{2s\bigl(e^{2\lambda\|\eta_0\|_\infty}-e^{\lambda\|\eta_0\|_\infty}\bigr)}{t(T-t)}
\right)
\\
&\le
\frac{27}{e^3}\,
\frac{1}{
\bigl[2s\bigl(e^{2\lambda\|\eta_0\|_\infty}-e^{\lambda\|\eta_0\|_\infty}\bigr)\bigr]^3}.
\end{aligned}
\]
Thus
\[
e^{-2s\varphi(x,t)}\xi(x,t)^3
\le
\frac{27}{8e^3}\,
\frac{e^{3\lambda\|\eta_0\|_\infty}}
{s^3\bigl(e^{2\lambda\|\eta_0\|_\infty}-e^{\lambda\|\eta_0\|_\infty}\bigr)^3}.
\]
Since $
s\ge \sigma_*\bigl(e^{2\lambda\|\eta_0\|_\infty}T+T^2\bigr)$, we have
\begin{align}\label{4661}
e^{-2s\varphi(x,t)}\xi(x,t)^3
\le
\frac{27}{8e^3\sigma_*^3}\,
\frac{1}
{\bigl(e^{\lambda\|\eta_0\|_\infty}-1\bigr)^3}\,
\frac1{T^6}.
\end{align}
		
		Using \eqref{eq:obs-carleman},
		\[
		\frac{1}{T^6}e^{-2sM_\varphi}
		\int_{\mathbb T^n\times(T/4,3T/4)} |q^*|^2 
		\lesssim \int_{\omega\times(0,T)} e^{-2s\varphi}\xi^3 |q^*|^2 .
		\]
        for any \(s\ge \sigma_*(e^{2\lambda \|\eta_0\|_\infty}T+T^2)\) and $\lambda\geq \lambda_*$. On the other hand, we apply \eqref{4661} to deduce 
		\[
		\int_{\omega\times(0,T)} e^{-2s\varphi}\xi^3 |q^*|^2 
		\lesssim T^{-6} \frac{1}
{\bigl(e^{\lambda\|\eta_0\|_\infty}-1\bigr)^3}
		\int_{\omega\times(0,T)} |q^*|^2 .
		\]
 Combining the two previous inequalities, we obtain 
		\begin{equation}
			\label{eq:mid-obs}
		\int_{\mathbb T^n\times(T/4,3T/4)} |q^*|^2 
\lesssim \frac{e^{2sM_\varphi}}
{\bigl(e^{\lambda\|\eta_0\|_\infty}-1\bigr)^3}
\int_{\omega\times(0,T)} |q^*|^2 
\leq \frac{\exp\big[C s T^{-2} e^{2\lambda\|\eta_0\|_\infty}\big]}
{\bigl(e^{\lambda\|\eta_0\|_\infty}-1\bigr)^3}
\int_{\omega\times(0,T)} |q^*|^2 .
		\end{equation}
		for some universal constant $C>0$. It remains to estimate $\|q^*(\cdot,0)\|_{L^2}$ in terms of the left-hand side of
		\eqref{eq:mid-obs}. Multiplying \eqref{eq:adjoint} by $q^*$ and integrating over
		$\mathbb T^n$, we employ Lions–Magenes lemma to obtain
		\[
		-\frac12 \frac{d}{dt}\|q^*(\cdot,t)\|_{L^2}^2
		+
		\alpha \|\nabla q^*(\cdot,t)\|_{L^2}^2
		=
		\sum_{i=1}^n \int_{\mathbb T^n} B_i^\top q^* \cdot \partial_i q^*
		+
		\int_{\mathbb T^n} \Gamma^\top q^* \cdot q^*.
		\]
		for a.e. $t\in[0,T]$. By Young's inequality,
		\[
		-\frac{d}{dt}\|q^*(\cdot,t)\|_{L^2}^2
		\leq (2\|\Gamma\|_{\infty}+\alpha^{-1}\|B\|_{\infty}^2)\,\|q^*(\cdot,t)\|_{L^2}^2.
		\]
		Hence, as $q^*\in C([0,T];L^2(\mathbb T^n;\mathbb R^2))$, we have
		\[
		\|q^*(\cdot,0)\|_{L^2}^2
		\le
		e^{(2\|\Gamma\|_{\infty}+\alpha^{-1}\|B\|_{\infty}^2)t}
		\|q^*(\cdot,t)\|_{L^2}^2\qquad \text{ for every $t\in[T/4,3T/4]$,}
		\]
		Integrating this inequality with respect to $t$ on $(T/4,3T/4)$, we infer
		\[
		\|q^*(\cdot,0)\|_{L^2}^2
		\le
		\frac{2}{T}
		e^{3(2\|\Gamma\|_{\infty}+\alpha^{-1}\|B\|_{\infty}^2)T/4}
		\int_{\mathbb T^n\times(T/4,3T/4)} |q^*|^2 .
		\]
		Combining this with \eqref{eq:mid-obs}, we obtain \eqref{eq:obs-standard} by setting \(s= \sigma_*(e^{2\lambda \|\eta_0\|_\infty}T+T^2)\) and $\lambda=\lambda_*$ with $\sigma_* \gtrsim_{\,\alpha,\eta_0,\gamma} 1+\|B\|_{\infty}^2+\|\Gamma\|_{\infty}^2$.
	\end{proof}
	
	The estimate in \Cref{prop:observability} shows that the map sending an adjoint solution to its value at time \(0\) is continuous with respect to its restriction to the set \(\omega\times(0,T)\). This allows us to define a control as the Riesz representative of the corresponding linear functional, and the duality identity then shows that the associated forward solution satisfies \(u(\cdot,T)=0\).
	
	\begin{lemma}[\bf Null controllability with $L^2$ control]
		\label{thm:null-const-torus} Assume that $B_i$ and $\Gamma\in L^\infty(\mathbb T^n \times (0,T);\mathbb R^{2\times 2})$ for all $i=1,2,\dots,n$. For any $T>0$, non-empty open set $\omega\subseteq \mathbb T^n$ and initial datum $u_0\in L^2(\mathbb T^n;\mathbb R^2)$, there exists a control $
		f\in L^2(\omega\times(0,T);\mathbb R^2)$
		such that the weak solution $
		u\in C([0,T];L^2(\mathbb T^n;\mathbb R^2))
		\cap L^2(0,T;H^1(\mathbb T^n;\mathbb R^2))$ of
		\begin{equation}
			\label{eq:forward}
			\begin{cases}
				\p_t u-A \Delta u-\displaystyle\sum_{i=1}^n B_i\partial_i u-\Gamma u
				=
				\chi_\omega f
				&\text{in }\mathbb T^n \times (0,T),
				\\[1mm]
				u(\cdot,0)=u_0 &\text{in }\mathbb T^n,
			\end{cases}
		\end{equation}
		satisfies
		\[
		u(\cdot,T)=0
		\qquad\text{ a.e. in }\mathbb T^n.
		\]
		Moreover, the control satisfies
		\[
		\|f\|_{L^2(\omega\times(0,T))}
		\le \C{5}^{1/2} \|u_0\|_{L^2(\mathbb T^n)},
		\]
		where $\C{5}$ is the constant in \Cref{prop:observability}. 

	\end{lemma}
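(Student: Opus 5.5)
We will use the standard Hilbert Uniqueness Method, i.e.\ duality, built on the observability inequality of \Cref{prop:observability}. Let $\mathcal H$ be the space of restrictions $q^*|_{\omega\times(0,T)}$, where $q^*$ ranges over weak solutions of the adjoint system \eqref{eq:adjoint} with terminal data $q_T^*\in L^2(\mathbb T^n;\mathbb R^2)$. Endow $\mathcal H$ with the $L^2(\omega\times(0,T))$ norm and let $\overline{\mathcal H}$ be its closure in $L^2(\omega\times(0,T);\mathbb R^2)$. On $\mathcal H$ define the linear functional
\[
\Lambda(q^*|_{\omega\times(0,T)}) := -\int_{\mathbb T^n} u_0\cdot q^*(\cdot,0).
\]
This is well defined: by \eqref{eq:obs-standard}, two adjoint solutions with the same restriction to $\omega\times(0,T)$ have the same value at $t=0$. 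Moreover $|\Lambda(q^*)|\le \C{5}^{1/2}\|u_0\|_{L^2}\|q^*\|_{L^2(\omega\times(0,T))}$. Hence $\Lambda$ extends continuously to $\overline{\mathcal H}$ with norm at most $\C{5}^{1/2}\|u_0\|_{L^2}$. By the Riesz representation theorem there is $f\in\overline{\mathcal H}\subset L^2(\omega\times(0,T);\mathbb R^2)$ with
\[
\int_{\omega\times(0,T)} f\cdot q^* = \Lambda(q^*)\quad\text{for all } q^*\in\mathcal H,
\qquad \|f\|_{L^2(\omega\times(0,T))}\le \C{5}^{1/2}\|u_0\|_{L^2}.
\]

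Next we derive the duality identity. Let $u$ be the weak solution of \eqref{eq:forward} with this $f$; it exists by \Cref{rmk existence of solution to general linear eq.}. Let $q^*$ solve \eqref{eq:adjoint} with arbitrary $q_T^*$. Pair the forward equation with $q^*$ and the adjoint equation with $u$, integrate over $Q$, and subtract. The operators $-A\Delta$ and $-A^\top\Delta$ are transposes of each other. Likewise $-\sum_i B_i\partial_i-\Gamma$ is the formal transpose of $\sum_i\partial_i(B_i^\top\cdot)-\Gamma^\top$, after integration by parts on the torus, where there are no boundary terms. Therefore
\[
\int_{\mathbb T^n} u(\cdot,T)\cdot q_T^* - \int_{\mathbb T^n} u_0\cdot q^*(\cdot,0) = \int_{\omega\times(0,T)} f\cdot q^*.
\]
The right-hand side equals $-\int u_0\cdot q^*(\cdot,0)$ by the choice of $f$. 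So $\int u(\cdot,T)\cdot q_T^*=0$ for every $q_T^*\in L^2$, which gives $u(\cdot,T)=0$ a.e.

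The main technical point is justifying this identity at the weak-solution level. Here $u,q^*\in L^2(0,T;H^1)\cap C([0,T];L^2)$ with time derivatives in $L^2(0,T;(H^1)')$, and the divergence term $\partial_i(B_i^\top q^*)$ only lies in $L^2(0,T;H^{-1})$. I would use the Lions--Magenes lemma: $t\mapsto\int u\cdot q^*$ is absolutely continuous, and
\[
\frac{d}{dt}\int u\cdot q^* = \langle\partial_t u,q^*\rangle + \langle\partial_t q^*,u\rangle .
\]
Substituting the two weak formulations, all spatial terms cancel exactly. The $B_i$ terms pair as $\int B_i\partial_i u\cdot q^*$ against $-\int B_i^\top q^*\cdot\partial_i u$, which is allowed since $B_i\in L^\infty$ and $u,q^*\in L^2H^1$. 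What remains is $\int_{\mathbb T^n}\chi_\omega f\cdot q^*$; integrating in time gives the identity.

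If desired, the density/closure step can be avoided. Instead minimize the strictly convex coercive functional
\[
J(q_T^*)=\tfrac12\int_{\omega\times(0,T)}|q^*|^2+\int_{\mathbb T^n} u_0\cdot q^*(\cdot,0)
\]
over the completion of $L^2$ under the observation seminorm, and set $f=\chi_\omega q^*$ for the minimizer. The Riesz approach above is shorter, however, and directly yields the stated bound with constant $\C{5}^{1/2}$.
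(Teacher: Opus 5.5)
Your proposal is correct and follows essentially the same route as the paper: you define the functional $-\int_{\mathbb T^n} u_0\cdot q(\cdot,0)$ on restrictions of adjoint solutions to $\omega\times(0,T)$, get well-definedness and boundedness from \Cref{prop:observability}, apply Riesz representation on the closure in $L^2(\omega\times(0,T))$, and justify the duality identity with the Lions--Magenes lemma. Your explicit check that the $B_i$ and $\Gamma$ terms cancel at the weak level is slightly more detailed than the paper's, but the argument is the same.
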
 
	
	\begin{proof}  For each \(q_T\in L^2(\mathbb T^n;\mathbb R^2)\), we denote \(q=\mathscr{L}[q_T]\)  the
		solution of the adjoint system \eqref{eq:adjoint} subject to the terminal data $q_T(\cdot)$. Consider the set
		\[
		\mathcal Y_0
		:=
		\pigl\{
		q \big|_{\omega\times(0,T)}=\mathscr{L}[q_T]\big|_{\omega\times(0,T)}  :\ q_T\in L^2(\mathbb T^n;\mathbb R^2)
		\pigr\}
		\subset L^2(\omega\times(0,T);\mathbb R^2).
		\]
		Define a linear functional \(G\) on \(\mathcal Y_0\) by
		\[
		G\!\left(q \big|_{\omega\times(0,T)}\right)
		:=
		-\int_{\mathbb T^n} u_0\cdot q (\cdot,0).
		\] 
		This is well defined, indeed, if $\check{q}=\mathscr{L}[\check{q}_T]$ for some $\check{q}_T \in L^2(\mathbb T^n;\mathbb R^2)$, then \(w:=q-\check{q}\) satisfies the adjoint equation \eqref{eq:adjoint}. If moreover $
		q  
		=
		\check{q} $ a.e. on $\omega\times(0,T)$, then $w$ vanishes on
		\(\omega\times(0,T)\). Hence, by Proposition~\ref{prop:observability},
		\[
		\|w(\cdot,0)\|_{L^2(\mathbb T^n)}^2
		\le
		\C{5} \int_{\omega\times(0,T)} |w|^2
		=0,
		\]
		so \(q (\cdot,0)=\check{q}(\cdot,0)\) a.e. in $\mathbb{T}^n$.
		
		Moreover, by Proposition~\ref{prop:observability},
		\[
		\left|G\!\left(q \big|_{\omega\times(0,T)}\right)\right|
		\le
		\|u_0\|_{L^2(\mathbb T^n)}
		\|q (\cdot,0)\|_{L^2(\mathbb T^n)}
		\le
		\C{5}^{1/2}\|u_0\|_{L^2(\mathbb T^n)}
		\|q \|_{L^2(\omega\times(0,T))}.
		\]
		Hence \(G\) is continuous on \(\mathcal Y_0\) in the \(L^2(\omega\times(0,T))\)-norm. Let \(\overline{\mathcal Y_0}\) be the completion of \(\mathcal Y_0\) with
		\(L^2(\omega\times(0,T))\)-norm. Then \(G\) extends uniquely to a continuous
		linear functional on \(\overline{\mathcal Y_0}\), still denoted by \(G\). By the Riesz representation theorem, there exists a unique
		\(f\in \overline{\mathcal Y_0}\subseteq L^2(\omega\times(0,T);\mathbb R^2)\) such that
		\begin{equation}
			\label{eq:HUM-Riesz-Y}
			\int_{\omega\times(0,T)} f\cdot y
			=
			G(y)
			\h{5pt} \text{for all  $y\in \overline{\mathcal Y_0}$.}
		\end{equation}
		In particular,
		\[
		\|f\|_{L^2(\omega\times(0,T))}
		=
		\|G\|_{\overline{\mathcal Y_0}^{\prime}}
		\le
		\C{5}^{1/2}\|u_0\|_{L^2(\mathbb T^n)}.
		\]
		 
		Let \(u\) be the weak solution of \eqref{eq:forward} corresponding to this control \(f\). By using the weak formulation of 
		\eqref{eq:forward} with the test function \(q=\mathscr{L}[q_T]\) and then that of the adjoint system \eqref{eq:adjoint} with the test function \(u\), we subtract the two relations and apply Lions–Magenes lemma to obtain
		\[
		\int_{\mathbb T^n} u(\cdot,T)\cdot q_T
		-
		\int_{\mathbb T^n} u_0\cdot q(\cdot,0)
		=
		\int_{\omega\times(0,T)} f\cdot q.
		\]
		Since \(q|_{\omega\times(0,T)}\in \mathcal Y_0\subseteq \overline{\mathcal Y_0}\), the defining property
		\eqref{eq:HUM-Riesz-Y} yields
		\[
		\int_{\omega\times(0,T)} f\cdot q
		=
		G\!\left(q|_{\omega\times(0,T)}\right)
		=
		-\int_{\mathbb T^n} u_0\cdot q(\cdot,0).
		\]
		Therefore
		\[
		\int_{\mathbb T^n} u(\cdot,T)\cdot q_T=0 \h{5pt} \text{for any $q_T\in L^2(\mathbb T^n;\mathbb R^2)$}.
		\]
		Hence \(u(\cdot,T)=0\) a.e. in \(\mathbb T^n\) .
	\end{proof}

	\subsection{Construction of $L^\infty$ control}
	We now turn to the construction of a $L^\infty$ control. We need an additional ingredient about the interior regularity result, which allows us to upgrade local \(L^\infty\) bounds on the forcing term into local \(W^{1,\infty}\) bounds for the solution. The proof of the following lemma can be proved in the same way as in \cite[Lemma 1]{FernándezCara06}.
	
	\begin{lemma}[\bf Interior regularity]
		\label{lem:torus-local-reg} 
		Let \(\mathcal{O}',\mathcal{O}\subset\mathbb T^n\) be non-empty open sets such that $
		\mathcal{O}'\subset\subset \mathcal{O}.$ Assume that \(B_i,\Gamma\in L^\infty\big(\mathbb T^n \times (0,T);\mathbb R^{2\times 2}\big)\) and \(F\in L^2\big(\mathbb T^n \times (0,T);\mathbb R^2\big)\cap L^\infty\big(\mathcal{O}\times(0,T);\mathbb R^2\big)\). For every \(\delta\in(0,T)\), there exists a constant $
		\C{6}=
		\C{6}(n,\mathcal{O},\mathcal{O}',\alpha,\gamma)$
		such that the weak solution 
		\(u\in L^2\big(0,T;H^1(\mathbb T^n;\mathbb R^2)\big)
		\cap C\big([0,T];L^2(\mathbb T^n;\mathbb R^2)\big)\) of \[
		\p_t u-A\Delta u-\sum_{i=1}^n B_i \partial_i u-\Gamma u=F
		\qquad\text{in }\mathbb T^n \times (0,T),
		\] satisfies
\begin{align*}
		\|u\|_{L^\infty(\delta,T;W^{1,\infty}(\mathcal{O}'))}
		\le
		\C{6}'
		\Bigl(
		\|u\|_{L^2(0,T;H^1)}
        +\|u\|_{C([0,T];L^2)}
		+
		\|F\|_{L^\infty(\mathcal{O}\times(0,T))}
		\Bigr).
\end{align*}
	where $	\C{6}'=\C{6}(1+T^{1/2}+T^{1/4})^n(1+\delta^{-1}+|A|+\|B\|_{\infty}+\|\Gamma\|_{\infty})^{n+1}$. If in addition \(u(\cdot,0)=0\), then the same conclusion holds with \(\delta=0\) and $\C{6}'=\C{6}(1+T^{1/2}+T^{1/4})^n(1+|A|+\|B\|_{\infty}+\|\Gamma\|_{\infty})^{n+1}$.
	\end{lemma}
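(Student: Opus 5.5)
We follow the localisation-and-bootstrap strategy of \cite[Lemma 1]{FernándezCara06}, adapted to the system setting. The key observation is that the principal operator $\partial_t-A\Delta$ is dissipative with energy identity governed by $(Ap)\cdot p=\alpha|p|^2$. It is also equivalent, after identifying $\mathbb R^2$ with $\mathbb C$, to the scalar complex heat operator $\partial_t-\beta'\Delta$ with $\beta'=\alpha+i\gamma$ and $\textup{Re}\,\beta'=\alpha>0$. Hence its fundamental solution on $\mathbb T^n$ obeys the same Gaussian-type $L^p$--$L^q$ and gradient bounds as the heat kernel, with constants depending only on $n,\alpha,\gamma$. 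All maximal-regularity and semigroup estimates used in the scalar proof therefore carry over.

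\textbf{Step 1: nested cut-offs.} Choose a finite chain of open sets $\mathcal O'=\mathcal O_{N}\subset\subset\mathcal O_{N-1}\subset\subset\cdots\subset\subset\mathcal O_0=\mathcal O$ with $N\approx n+1$. Choose spatial cut-offs $\theta_k\in C_c^\infty(\mathcal O_{k-1})$ with $\theta_k\equiv1$ on $\mathcal O_k$. Choose time cut-offs $\rho_k\in C^\infty([0,T])$ vanishing near $0$, equal to $1$ on $[\delta k/N,T]$, and satisfying $|\rho_k'|\lesssim N/\delta$. In the case $u(\cdot,0)=0$ we take $\rho_k\equiv1$. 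Set $u_k:=\rho_k\theta_k u$. Then $u_k(\cdot,0)=0$ and
\[
\partial_t u_k-A\Delta u_k-\sum_i B_i\partial_i u_k-\Gamma u_k=\rho_k\theta_kF+\rho_k'\theta_k u-\rho_kA\bigl(2\nabla\theta_k\cdot\nabla u+(\Delta\theta_k)u\bigr)-\rho_k\sum_iB_i(\partial_i\theta_k)u.
\]
The right-hand side involves $u$ and $\nabla u$ only on $\operatorname{supp}\nabla\theta_k\subset\mathcal O_{k-1}$. The term with $\nabla u$ can be written in divergence form as $-2\nabla\cdot(\rho_kA u\nabla\theta_k)+(\ldots)u$ if needed.

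\textbf{Step 2: bootstrap.} We treat the lower-order terms $B_i\partial_iu_k+\Gamma u_k$ as part of the forcing. We then apply the $L^p$ maximal regularity for $\partial_t-A\Delta$ on $\mathbb T^n\times(0,T)$ with zero initial data: if the forcing lies in $L^p$, then $u_k\in W^{2,1}_p$ with a norm bound, and the $u_k$ on the left is absorbed by Gronwall. Parabolic Sobolev embeddings then give $\nabla u_k\in L^{p^*}$ with $\frac1{p^*}=\frac1p-\frac1{n+2}$, and eventually $\nabla u_k\in L^\infty$ once $p>n+2$. Each step raises integrability, because $u,\nabla u$ on $\mathcal O_{k-1}$ are controlled by $u_{k-1}$ in the previous space. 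We start from $p=2$, using $\|u\|_{L^2(H^1)}+\|u\|_{C(L^2)}$. After finitely many ($\lesssim n$) steps we reach $p>n+2$, and the last step yields $u_N\in L^\infty(0,T;W^{1,\infty})$. The step with $F$ uses only $\|F\|_{L^\infty(\mathcal O\times(0,T))}\ge\|F\|_{L^p(\mathcal O\times(0,T))}/|\mathcal O\times(0,T)|^{1/p}$. Tracking the constants gives the stated form $(1+T^{1/2}+T^{1/4})^n(1+\delta^{-1}+|A|+\|B\|_\infty+\|\Gamma\|_\infty)^{n+1}$: each iteration contributes one factor of $(1+\delta^{-1}+|A|+\|B\|+\|\Gamma\|)$ from the commutator and coefficient terms, and a power of $T$ from Hölder in time.

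\textbf{Main obstacle.} The delicate point is the maximal regularity and embedding constants for the non-symmetric system. We would justify these via the complex scalar reformulation $\partial_t w-\beta'\Delta w=g$. On the torus, its Fourier multiplier $(i\tau+\beta'|k|^2)^{-1}|k|^2$ is a Mikhlin multiplier because $\textup{Re}\,\beta'>0$, with bounds depending on $\alpha,\gamma$. Its kernel $e^{t\beta'\Delta}$ satisfies $\|\nabla e^{t\beta'\Delta}\|_{L^p\to L^q}\lesssim t^{-\frac12-\frac n2(\frac1p-\frac1q)}$, which supplies the Sobolev-type gains in the bootstrap. The rest is the bookkeeping of \cite{FernándezCara06}.
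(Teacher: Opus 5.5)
Your proposal is correct and follows essentially the paper's route. The paper gives no argument of its own and defers to the cut-off/bootstrap proof of \cite[Lemma 1]{FernándezCara06}; your reduction of the principal part to the complex scalar operator $\partial_t-(\alpha+i\gamma)\Delta$ with $\mathrm{Re}(\alpha+i\gamma)=\alpha>0$ supplies the $L^p$ maximal-regularity and parabolic-embedding input for the non-symmetric system that the paper leaves implicit.

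Drop the Gronwall remark. Once $B_i\partial_i u_k+\Gamma u_k$ is moved to the right-hand side, it should be bounded directly by the previous bootstrap step, which already controls $u$ and $\nabla u$ there provided $\rho_k$ vanishes on $[0,\delta(k-1)/N]$. Absorbing these terms via Gronwall would instead give exponential, not the stated polynomial, dependence on $\|B\|_\infty$ and $\|\Gamma\|_\infty$.
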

We are now in a position to prove \Cref{thm null control of linear eq}. The idea is to start from the \(L^2\) control given by \Cref{thm:null-const-torus}, localize it inside a smaller subset of \(\omega\), and then use suitable cut-off functions in space and time to construct a new control supported in \(\omega\). The interior regularity lemma is then applied to the corresponding solutions in order to show that this new control actually belongs to \(L^\infty(\omega\times(0,T))\).

	\begin{proof}[\bf Proof of \Cref{thm null control of linear eq}]  Choose open sets
		\[
		\omega_4\subset\subset\omega_3\subset\subset\omega_2\subset\subset\omega_1\subset\subset\omega.
		\]
		By \Cref{thm:null-const-torus} applied with control region \(\omega_4\), there exists $
		f^*\in L^2(\omega_4\times(0,T);\mathbb R^2)$ such that 
		\begin{equation}\label{eq:fstar-L2}
			\|f^*\|_{L^2(\omega_4\times(0,T))}
			\le
			\C{5}^{1/2}\|u_0\|_{L^2(\mathbb T^n)}.
		\end{equation} and the corresponding solution \(u^*\) of \eqref{eq:forward}, subject to initial condition $u_0$ and control $f^*$, satisfies $
		u^*(\cdot,T)=0$ a.e. in $\mathbb{T}^n$.

		\smallskip
		\noindent
		\textbf{Step 1. Cut-off in space and time:}
		Let \(\Psi_2\in C^\infty([0,T];[0,1])\) satisfy
		\[
		\Psi_2(t)=1 \text{ on }[0,T/4),
		\qquad
		\Psi_2(t)=0 \text{ on }(3T/4,T],
		\qquad
		|\Psi_2'(t)|\le \frac{4}{T} \text{ on }[0,T].
		\]
		Let \(\Upsilon\in C\big([0,T];L^2(\mathbb T^n;\mathbb R^2)\big)\cap L^2\big(0,T;H^1(\mathbb T^n;\mathbb R^2)\big)\) be the weak solution of the homogeneous system
		\[
		\begin{cases}
			\p_tu -A\Delta u-\displaystyle\sum_{i=1}^n B_i\partial_i u-\Gamma  u=0
			&\text{in }Q,
			\\[1mm]
			u(\cdot,0)=u_0
			&\text{in }\mathbb T^n.
		\end{cases}
		\]
		Then \(w^*:=u^*-\Psi_2\Upsilon\) solves
		\begin{equation*}
				\begin{cases}
			\p_t u-A\Delta u-\displaystyle\sum_{i=1}^n B_i\partial_i u-\Gamma u
			=
			-\Psi_2'\Upsilon+\chi_{\omega_4}f^*
			&\text{in }Q,
			\\[1mm]
			u(\cdot,0)=u(\cdot,T)=0
			&\text{in }\mathbb T^n.
		\end{cases}
		\end{equation*}  
		Choose \(\Psi_3\in C_c^2(\omega_2;[0,1])\) such that $
		0\le \Psi_3\le 1$ and $ \Psi_3\equiv 1$ on $\omega_3$. Set $
		w^\dagger:=(1-\Psi_3)w^*.$ Since \(\omega_4\subset\subset\omega_3\) and \(\Psi_3\equiv1\) on \(\omega_3\), we have
		\[
		(1-\Psi_3)\chi_{\omega_4}f^*=0.
		\]
		A direct computation gives $
		\partial_i w^\dagger=(1-\Psi_3)\partial_i w^*-(\partial_i\Psi_3)w^*$ and $\Delta w^\dagger=(1-\Psi_3)\Delta w^*-2(\nabla\Psi_3\cdot\nabla) w^*-(\Delta\Psi_3)w^*.$ Therefore
		\[
		\begin{aligned}
			&\p_t w^\dagger-A\Delta w^\dagger-\sum_{i=1}^n B_i\partial_i w^\dagger-\Gamma w^\dagger
			\\
			&=
			-(1-\Psi_3)\Psi_2'\Upsilon
			+
			A\Bigl[2(\nabla\Psi_3\cdot\nabla) w^*+(\Delta\Psi_3)w^*\Bigr]
			+
			\sum_{i=1}^n B_i(\partial_i\Psi_3)w^* .
		\end{aligned}
		\]
		Then $
		u^\dagger:=w^\dagger+\Psi_2\Upsilon$
		solves
		\begin{equation*}
			\begin{cases}
			\p_t u-A\Delta u-\displaystyle\sum_{i=1}^n B_i\partial_i u-\Gamma u=\chi_\omega f_0,
				&\text{in }Q,
				\\[1mm]
				u(\cdot,0)=u_0,\qquad u(\cdot,T)=0
				&\text{in }\mathbb T^n,
			\end{cases}
		\end{equation*}   
		with
		\begin{equation}\label{eq:new-control}
			f_0
			=
			\Psi_2'\Psi_3\Upsilon
			+
			A\Bigl[2(\nabla\Psi_3\cdot\nabla) w^*+(\Delta\Psi_3)w^*\Bigr]
			+
			\sum_{i=1}^n B_i(\partial_i\Psi_3)w^* .
		\end{equation}
		Since \(\textup{supp}( \Psi_3)\subset\omega_2\subset\subset\omega\), the function $f_0$ in \eqref{eq:new-control} is supported
		in \(\omega\times(0,T)\).

		\noindent
		\textbf{Step 2. \(L^\infty\)-estimate of the new control $f_0$.}
		We first estimate \(\Psi_2'\Psi_3\Upsilon\). By Lemma~\ref{lem:torus-local-reg}, applied to \(\Upsilon\) with \(\mathcal{O}=\omega\), \(\mathcal{O}'=\omega_2\),
		and \(\delta=T/8\), we obtain
		\[
		\|\Upsilon\|_{L^\infty(T/8,T;W^{1,\infty}(\omega_2))}
		\le \C{6}' \big( \|\Upsilon\|_{L^2(0,T;H^1)}
        +\|\Upsilon\|_{C([0,T];L^2)}\big).
		\]
		Since \(\Psi_2'\equiv0\) on \((0,T/4)\) and $\Psi_3=0$ outside $\omega_2$, it follows that
		\begin{equation}\label{eq:eta-chi-est}
			\|\Psi_2'\Psi_3\Upsilon\|_{L^\infty(\omega\times(0,T))}
			\lesssim
			\frac{1}{T}\,\|\Upsilon\|_{L^\infty(\omega_2\times(T/8,T))}
			\leq
			\frac{1}{T} \C{6}' \big( \|\Upsilon\|_{L^2(0,T;H^1)}
        +\|\Upsilon\|_{C([0,T];L^2)}\big).
		\end{equation}
		Replace $\mathcal O'=\omega_2$ by $\mathcal O'=\omega_1$. It yields
		\[
		\|\Upsilon\|_{L^\infty(\omega_1\times(T/8,T))}
		\le\|\Upsilon\|_{L^\infty(T/8,T;W^{1,\infty}(\omega_1))}
		\le \C{6}' \big( \|\Upsilon\|_{L^2(0,T;H^1)}
        +\|\Upsilon\|_{C([0,T];L^2)}\big).
		\]  
		Now, by construction of \(\Psi_2\), we have \(\textup{supp} (\Psi_2')\subseteq [T/4,3T/4]\subset [T/8,T]\). Therefore,
\begin{align}\label{5193}
	\|\Psi_2'\Upsilon\|_{L^\infty(\omega_1\times(0,T))} 
	\le
	\|\Psi_2'\|_{L^\infty(0,T)}
	\|\Upsilon\|_{L^\infty(\omega_1\times(T/8,T))}
	\le
	\frac{1}{T}\, \C{6}'\big( \|\Upsilon\|_{L^2(0,T;H^1)}
        +\|\Upsilon\|_{C([0,T];L^2)}\big).
\end{align}
		Consequently, as $\chi_{\omega_4}f^*=0$ on $\omega_1\setminus\overline{\omega_4}$, we obtain
		\[
		-\Psi_2'\Upsilon+\chi_{\omega_4}f^*
		\in L^\infty( (\omega_1\setminus\overline{\omega_4})\times(0,T))
		\]
		Hence, applying Lemma~\ref{lem:torus-local-reg} to \(w^*\) with $
		\mathcal{O}'=\omega_2\setminus\overline{\omega_3}\subset\subset \mathcal{O}=\omega_1\setminus\overline{\omega_4}$,
		we get
		\[
		\|w^*\|_{L^\infty(0,T;W^{1,\infty}(\omega_2\setminus\overline{\omega_3}))}
		\le \C{6} '
		\Bigl(
        \|w^*\|_{L^2(0,T;H^1)}
        +\|w^*\|_{C([0,T];L^2)} 
		+
		\|\Psi_2'\Upsilon\|_{L^\infty(\omega_1\times(0,T))}
		\Bigr).
		\] 
		We observe that $
		\textup{supp}(\nabla\Psi_3)\cup \textup{supp}(\Delta\Psi_3)
		\subset \omega_2\setminus\overline{\omega_3},$ whereas the \(L^2\)-control term \(\chi_{\omega_4}f^*\) is supported in
		\(\omega_4\subset\subset\omega_3\). Consequently,
		\begin{equation}\label{eq:cutoff-terms-est}
			\begin{aligned}
				&\left\|
				A\Bigl[2(\nabla\Psi_3\cdot\nabla) w^*+(\Delta\Psi_3)w^*\Bigr]
				+
				\sum_{i=1}^n B_i(\partial_i\Psi_3)w^*
				\right\|_{L^\infty(\omega\times(0,T))}
				\\
				&\lesssim_{\,\omega}
				\Bigl(
				 |A|+\|B\|_{\infty}
				\Bigr)
				\|w^*\|_{L^\infty(0,T;W^{1,\infty}(\omega_2\setminus\overline{\omega_3}))}
				\\
				&\le \C{6}' \Bigl(
        \|w^*\|_{L^2(0,T;H^1)}
        +\|w^*\|_{C([0,T];L^2)} 
		+
		\|\Psi_2'\Upsilon\|_{L^\infty(\omega_1\times(0,T))}
		\Bigr).
			\end{aligned}
		\end{equation}
		 
		The standard $L^2$ energy estimate and Gr\"onwall's inequality applied to \(\Upsilon\) yield
		\begin{equation}\label{eq:chi-Y}
			 \|\Upsilon\|_{L^2(0,T;H^1)}
        +\|\Upsilon\|_{C([0,T];L^2)}
			\leq C_{\alpha,\|B\|_{\infty},\|\Gamma\|_{\infty}}e^{C_{\alpha,\|B\|_{\infty},\|\Gamma\|_{\infty}}T}
			\|u_0\|_{L^2(\mathbb T^n)}.
		\end{equation}
		The same standard estimate applied to \(w^*\), together with \eqref{eq:fstar-L2} and \eqref{eq:chi-Y}, it gives
		\begin{align}\label{eq:wstar-Y}
			 \|w^*\|_{L^2(0,T;H^1)}
        +\|w^*\|_{C([0,T];L^2)} 
			&\leq C_{\alpha,\|B\|_{\infty},\|\Gamma\|_{\infty}}e^{C_{\alpha,\|B\|_{\infty},\|\Gamma\|_{\infty}}T}
			\Bigl(
			\|\Psi_2'\Upsilon\|_{L^2(Q)}+\|f^*\|_{L^2(\omega_4\times(0,T))}
			\Bigr)\nonumber\\
            &\leq \pig[C_{\alpha,\|B\|_{\infty},\|\Gamma\|_{\infty}}T^{-1}e^{C_{\alpha,\|B\|_{\infty},\|\Gamma\|_{\infty}}T}+\C{5}^{1/2}\pig]\|u_0\|_{L^2(\mathbb T^n)}.
		\end{align} 
		Putting \eqref{eq:wstar-Y}, \eqref{eq:chi-Y} and \eqref{5193} into \eqref{eq:cutoff-terms-est}; and also inserting \eqref{eq:chi-Y} into \eqref{eq:eta-chi-est}, the definition of $f_0$ in \eqref{eq:new-control} implies $
		\|f_0\|_{L^\infty(\omega\times(0,T))}
		\le
		\C{4}\,\|u_0\|_{L^2(\mathbb T^n)}.$ Here the constant 
        \begin{align}
		\C{4}=P_{\alpha,\gamma,\omega,\|B\|_{\infty},\|\Gamma\|_{\infty}}(T)e^{C_{\alpha,\gamma,\omega,\|B\|_{\infty},\|\Gamma\|_{\infty}}T}\pig[1+e^{C_{\alpha,\gamma,\omega,\|B\|_{\infty},\|\Gamma\|_{\infty}}\frac{1}{T}}\pig]\label{5129}
		\end{align} where $P_{\alpha,\|B\|_{\infty},\|\Gamma\|_{\infty}}(T)$ is a polynomial in $T$ (may have negative degree and the degrees are universal) with coefficient depending on $\alpha$, $\gamma$, $\omega$, $\|B\|_{\infty}$, $\|\Gamma\|_{\infty}$.
	\end{proof}

	\section{Fixed point arguments and proofs of main results}\label{sec. Fixed point arguments and proof of main results}
	We first establish the maximal regularity result for the equation \eqref{eq. projected and linearised eq}:
	\begin{lemma}\label{lem higher est}
		Fix $T>0$, non-empty open set $\omega\subseteq \mathbb T^2$ and $p\in(1,\infty)$. Assume that $B_1,\,B_2,\,\Gamma \in L^\infty(\mathbb{T}^2\times(0,T);\mathbb{R}^{2\times 2})$, $f\in L^p\big(\omega \times(0,T)\big)$ and $v_0\in W^{2,p}(\mathbb{T}^2;\mathbb{R}^2)$. Then there is a constant $\C{7}=\C{7}(p,T,\alpha,\gamma,\|B\|_{\infty},\|\Gamma\|_\infty)$ such that the solution $v$ of equation \eqref{eq. general linear eq.}, subject to $v(\cdot,0)=v_0$ and control $f$, satisfies 
		$$ \|v\|_{L^p(0,T;W^{2,p})} 
		+\|\p_t v\|_{L^p(0,T;L^{p})} 
		\leq \C{7}\pig(\|v_0\|_{W^{2,p}(\mathbb{T}^2;\mathbb{R}^2)}
		+\| f \|_{L^p(\omega \times(0,T))}\pig).$$
	\end{lemma}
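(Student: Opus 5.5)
We treat the lower-order terms $\sum_i B_i\partial_i v+\Gamma v$ as a perturbation of the constant-coefficient operator $\partial_t-A\Delta$ on $\mathbb T^2$. The first step is maximal $L^p$-regularity for
\[
\partial_t w-A\Delta w=g,\qquad w(\cdot,0)=w_0 .
\]
After the identification of $\mathbb R^2$ with $\mathbb C$ from \eqref{def. z=q1+iq2, G=F1+iF2}, this is the scalar complex equation $\partial_t w-(\alpha+i\gamma)\Delta w=g$ with $\textup{Re}(\alpha+i\gamma)=\alpha>0$. The operator $(\alpha+i\gamma)\Delta$ generates an analytic semigroup on $L^p(\mathbb T^2;\mathbb C)$. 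Its resolvent is a Fourier multiplier $(\lambda+(\alpha+i\gamma)|k|^2)^{-1}$ on $\mathbb Z^2$, and this multiplier is $\mathcal R$-sectorial of angle $<\pi/2$ by the Marcinkiewicz/transference multiplier theorem. Weis' theorem, or equivalently the classical Solonnikov/Ladyzhenskaya estimates for parabolic systems in the Petrovskii sense, then gives
\[
\|w\|_{L^p(0,T;W^{2,p})}+\|\partial_t w\|_{L^p(0,T;L^p)}\le C(p,T,\alpha,\gamma)\bigl(\|w_0\|_{W^{2,p}}+\|g\|_{L^p(Q)}\bigr).
\]
Here $W^{2,p}$ initial data suffice, since $W^{2,p}$ embeds into the trace space $(L^p,W^{2,p})_{1-1/p,p}$.

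For the full equation we take $g=\chi_\omega f+\sum_iB_i\partial_iv+\Gamma v$. This gives
\[
\|v\|_{L^p(0,t;W^{2,p})}+\|\partial_tv\|_{L^p(0,t;L^p)}\le C\bigl(\|v_0\|_{W^{2,p}}+\|f\|_{L^p}+(\|B\|_\infty+\|\Gamma\|_\infty)\|v\|_{L^p(0,t;W^{1,p})}\bigr)
\]
on every subinterval $(0,t)$. The lower-order term is absorbed by interpolation. We use $\|v\|_{W^{1,p}}\le\epsilon\|v\|_{W^{2,p}}+C_\epsilon\|v\|_{L^p}$ together with the bound $\|v(\cdot,t)\|_{L^p}\le\|v_0\|_{L^p}+\int_0^t\|\partial_tv\|_{L^p}$ and a Gronwall argument in $t$. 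Alternatively, one can split $[0,T]$ into finitely many short intervals of length depending on $\|B\|_\infty,\|\Gamma\|_\infty$ and iterate, using that the constant is uniform for intervals of length at most $T$. Either route yields the stated constant $\C{7}(p,T,\alpha,\gamma,\|B\|_\infty,\|\Gamma\|_\infty)$.

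Existence of a solution with this regularity follows from a fixed point or continuity-method argument in the space $W^{2,1}_p$, using the a priori estimate. The solution coincides with the weak solution of \Cref{rmk existence of solution to general linear eq.} by uniqueness of weak solutions (the $L^2$ energy estimate).

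The main obstacle is justifying constant-coefficient maximal regularity for the non-symmetric matrix $A$. I would settle it by the complex scalar reduction above, since that reduces the question to a standard Fourier multiplier estimate on the torus. The absorption of the first-order term is routine, but one must check that no smallness of $\|B\|_\infty$ is needed; the interpolation and Gronwall step handles this.
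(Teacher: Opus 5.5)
Your argument is correct, but it is organised differently from the paper's. The paper never separates the principal part from the lower-order terms. It applies the maximal $L^p$-regularity theorem \cite[Theorem 2.1]{denk2007optimal} directly to the full operator $-A\Delta-\sum_i B_i\partial_i-\Gamma$. It checks the ellipticity condition through $\sigma(A)=\{\alpha\pm i\gamma\}\subset\{\mathrm{Re}\,\lambda>0\}$, and the smoothness condition from the boundedness of the lower-order coefficients and the constancy of $A$. This gives at once an isomorphism between $W^{1,p}(0,T;L^p)\cap L^p(0,T;W^{2,p})$ and the data space with initial values in $B^{2-2/p}_{p,p}$. The lemma then follows from $W^{2,p}\hookrightarrow B^{2-2/p}_{p,p}$.

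You instead prove only the constant-coefficient estimate; your identification $A\leftrightarrow\alpha+i\gamma$ is exactly the concrete form of that ellipticity condition. You then absorb $\sum_iB_i\partial_iv+\Gamma v$ by interpolation and Gronwall, and get existence by a continuity or contraction argument.

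The citation route is shorter and delivers existence, uniqueness and the sharp trace space together. Its costs are that it leans on a heavy theorem formulated for domains of $\mathbb R^n$ with boundary operators, so the transfer to $\mathbb T^2$ is tacit, and that it hides how the constant depends on $T$, $\|B\|_\infty$, $\|\Gamma\|_\infty$. Your route is more self-contained and shows explicitly that no smallness of the lower-order coefficients is needed. It also makes that dependence explicit, and it ties the strong solution to the weak solution of \Cref{rmk existence of solution to general linear eq.} used elsewhere in the paper.

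A few minor points need care:
\begin{itemize}
\item The Gronwall step uses that the constant-coefficient constant is uniform over $(0,t)$ for $t\le T$. This holds because you can extend $g$ by zero.
\item In the short-interval variant, the intermediate data $v(\cdot,t_k)$ lie only in $B^{2-2/p}_{p,p}$, so you must use the estimate with the trace-space norm.
\item The identification with the $L^2$ weak solution is immediate for $p\ge 2$, which covers the case $p>4$ used later. For $p<2$, uniqueness in the maximal-regularity class follows from your a priori estimate itself.
\end{itemize}
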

	\begin{proof}
		It is a direct consequence of \cite[Theorem 2.1]{denk2007optimal}, with setting $m=1$, $G=\mathbb{T}^2$, $E=\mathbb{R}^2$, $\mathcal{B}(E)=\mathbb{R}^{2\times 2}$, $\mathcal{A}(t,x,D)=\sum_{|\alpha|\leq 2}a_\alpha D^\alpha=-A(\Delta+B\cdot \nabla +c)$ and $u_0=v_0$. Condition (E) in \cite{denk2007optimal} is satisfied as $\mathcal{A}_{\#}(t,x,D)=-A\Delta$ and hence its spectrum is $\sigma(\mathcal{A}_{\#})=\sigma(\alpha I+\gamma J)=\{\alpha\pm i\gamma\}\subset\mathbb{C}_+$. Condition (SD) is also satisfied as all the coefficients of the equation is bounded and $A$ is constant. 
        Condition (D) is valid as well by the assumption of this lemma. Therefore, we can apply \cite[Theorem 2.1]{denk2007optimal} to obtain a bijective bounded linear operator from the solution space $W^{1,p}\big(0,T;L^{p}(\mathbb{T}^2;\mathbb{R}^2)\big)\cap L^{p}\big(0,T;W^{2,p}(\mathbb{T}^2;\mathbb{R}^2)\big)$ to the product space containing the initial condition and non-homogeneous term of the equation. Thus, we obtain
		$$ \|v\|_{L^p(0,T;W^{2,p})} 
		+\|\p_t v\|_{L^p(0,T;L^{p})} 
		\leq C_{p,T,\alpha,\gamma,\|B\|_{\infty},\|\Gamma\|_\infty}\pig(\|v_0\|_{B^{2(1-1/p)}_{p,p}(\mathbb{T}^2;\mathbb{R}^2)}
		+\| \chi_\omega f\|_{L^p(\mathbb{T}^2 \times(0,T))}\pig)$$
		where $B^{2(1-1/p)}_{p,p}$ is the Besov space. The estimate is concluded by the continuous embedding from $W^{2,p}$ into $B^{2(1-1/p)}_{p,p}$.
	\end{proof}

	A simplified version of \cite[Theorem 16.53 of Section 16.9]{aliprantis2006infinite} is obtained by:
	\begin{theorem}[\bf Kakutani Coincidence Theorem]\label{thm Kakutani}
		Let $K$ be a non-empty compact convex subspace of a locally convex Hausdorff space $X$. Let $\varphi,\psi : K \to 2^K$ be set-valued maps satisfying the following
		\begin{enumerate}
			\item for every $x\in K$, the sets $\varphi(x)$ and $\psi(x)$ are non-empty, closed and convex in $X$;
			\item for every continuous linear functional $\ell\in X'$ and every $\alpha\in\mathbb R$, if we set $
			H:=\{y\in X:\ \ell(y)<\alpha\}$, then the sets
			\[
			\{x\in K:\ \varphi(x)\subset H\}\quad\text{ and } \quad\{x\in K:\ \psi(x)\subset H\}
			\]
			are open in $K$ (with the subspace topology);
			\item for each $x$ in $K$, there exist $u\in\varphi(x)$, $v\in\psi(x)$, and a real number $\lambda>0$ such that $x+\lambda(u-v)\in K.$
		\end{enumerate}
		Then there exists $x \in K$ satisfying $
		\varphi(x)\cap\psi(x)\neq\varnothing.$ 
	\end{theorem}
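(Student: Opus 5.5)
We argue by contradiction, following the classical separation and partition-of-unity route to coincidence theorems, and reduce to Ky Fan's minimax inequality (equivalently, a Brouwer-type fixed point on a finite-dimensional simplex).

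\textbf{Step 1: local separation.} Suppose that $\varphi(x)\cap\psi(x)=\varnothing$ for every $x\in K$. Fix $x\in K$. The sets $\varphi(x)$ and $\psi(x)$ are closed subsets of the compact set $K$, so they are compact; they are also convex and disjoint. Since $X$ is locally convex and Hausdorff, the strict Hahn--Banach separation theorem gives $\ell_x\in X'$ and reals $a_x<b_x$ with
\[
\sup_{\varphi(x)}\ell_x<a_x<b_x<\inf_{\psi(x)}\ell_x .
\]
Define
\[
U_x:=\{x'\in K:\ \varphi(x')\subset\{\ell_x<a_x\}\}\cap\{x'\in K:\ \psi(x')\subset\{-\ell_x<-b_x\}\}.
\]
Applying hypothesis (2) twice, once to $\ell_x$ and once to $-\ell_x$, shows that $U_x$ is open in $K$. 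It contains $x$. Set $c_x:=(a_x+b_x)/2$. Then for every $x'\in U_x$ we have $\ell_x(u)<c_x<\ell_x(v)$ for all $u\in\varphi(x')$ and $v\in\psi(x')$.

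\textbf{Step 2: partition of unity and Ky Fan.} By compactness, finitely many sets $U_1,\dots,U_N$ (with functionals $\ell_j$ and constants $c_j$) cover $K$. Since $K$ is a compact Hausdorff space, it is normal, so we can take a continuous partition of unity $\beta_1,\dots,\beta_N$ on $K$ subordinate to this cover. Define
\[
f(x,y):=\sum_{j=1}^N\beta_j(x)\,\ell_j(x-y),\qquad x,y\in K.
\]
This function has three properties:
\begin{itemize}
\item $f(\cdot,y)$ is continuous, because each $\ell_j$ is continuous and each $\beta_j$ is continuous;
\item $f(x,\cdot)$ is affine, hence concave;
\item $f(x,x)=0$.
\end{itemize}
Ky Fan's minimax inequality on the nonempty compact convex set $K$ therefore yields $x_0\in K$ with $f(x_0,y)\le 0$ for all $y\in K$. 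Writing $L:=\sum_j\beta_j(x_0)\ell_j\in X'$, this says $L(y)\ge L(x_0)$ for all $y\in K$.

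If one prefers not to cite Ky Fan, the same $x_0$ can be produced by Brouwer's theorem. Restrict to the convex hull of finitely many points and use a continuous selection built from the $\beta_j$. I expect this to be the main technical step: it is the only place where a genuine topological fixed point is needed, and the continuity of $f$ in $x$ must be checked in the weak topology carried by $K$.

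\textbf{Step 3: contradiction via (3).} By hypothesis (3) there are $u\in\varphi(x_0)$, $v\in\psi(x_0)$ and $\lambda>0$ with $y:=x_0+\lambda(u-v)\in K$. Step 2 then gives $\lambda L(u-v)=L(y)-L(x_0)\ge0$, that is, $L(u)\ge L(v)$.

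On the other hand, consider any $j$ with $\beta_j(x_0)>0$. Then $x_0\in U_j$, so Step 1 gives $\ell_j(u)<c_j<\ell_j(v)$. At least one such $j$ exists because $\sum_j\beta_j(x_0)=1$. Multiplying by $\beta_j(x_0)$ and summing over $j$ gives $L(u)<L(v)$, which contradicts $L(u)\ge L(v)$.

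Hence $\varphi(x)\cap\psi(x)\neq\varnothing$ for some $x\in K$.
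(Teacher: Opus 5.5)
Your proof is correct. The paper gives no argument of its own for this statement: it quotes it as a simplified form of Theorem 16.53 in Aliprantis--Border. So the comparison is between a bare citation and your self-contained proof, which is the classical Fan-type argument.

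Every step checks out:
\begin{enumerate}
\item The values $\varphi(x),\psi(x)$ are closed subsets of the compact set $K$, hence compact. This is what licenses strict separation.
\item Hypothesis (2), used once with $(\ell_x,a_x)$ for $\varphi$ and once with $(-\ell_x,-b_x)$ for $\psi$, makes $U_x$ open.
\item $K$ is compact Hausdorff, so the finite subcover admits a subordinate partition of unity.
\item Your $f$ is continuous in $x$, affine in $y$ and zero on the diagonal. This matches the orientation required by Ky Fan's minimax inequality: lower semicontinuous in the variable being solved for, quasiconcave in the other. It gives $x_0$ at which $L=\sum_j\beta_j(x_0)\ell_j$ attains its minimum over $K$.
\item Hypothesis (3) at $x_0$ yields $L(u)\ge L(v)$. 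This contradicts the strict inequality $L(u)<L(v)$ obtained by averaging over the indices with $\beta_j(x_0)>0$.
\end{enumerate}

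Two minor remarks. First, the concern about continuity ``in the weak topology carried by $K$'' is a non-issue. $K$ carries the subspace topology of $X$, and each $\ell_j\in X'$ is continuous there. In the paper's application $X$ is given its weak topology, whose dual is the same. Second, the Brouwer alternative you sketch is unnecessary, since the minimax inequality holds in any Hausdorff topological vector space.

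Compared with the citation, your proof shows where each hypothesis enters. In particular, closedness of the values together with $\varphi(x)\subset K$ is what gives the compactness needed for separation. It also shows that with $\psi(x)=\{x\}$ condition (3) is automatic: take $v=x$ and $\lambda=1$, so $x+\lambda(u-v)=u\in K$. This justifies the paper's remark that only (1)--(2) need checking in the fixed-point argument.
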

	\begin{remark}
	In particular, we can set $\psi(x)=\{x\}$ such that conclusion would become: there exists $x \in K$ satisfying $x\in \varphi(x)$. In this case  and hence we only have to validate (1)-(2).  
	\end{remark}

Consider the equation
		\begin{equation}\label{eq:nonlin-A-control}
			\left\{
			\begin{aligned}
				\partial_t u
				&=
				A\Big[\Delta u-2(\nabla u)\,\nabla\big(\log h(u)\big)
				+\frac{2|\nabla u|^2}{h(u)}\,u\Big]
				+\chi_\omega f&&\text{in }Q;
				\\
				u(\cdot,0)&=y_0&&\text{in }\mathbb{T}^2.
			\end{aligned}
			\right.
		\end{equation}
		where $h(v):=1+|v|^2$.  For any $M>0$, $\varepsilon>0$ and \(y_0\in W^{2,p}(\mathbb T^2;\mathbb R^2)\), we define the target set
\[
\mathcal T(\varepsilon,M,T,y_0):=
\left\{
\overline y(\cdot,T)\;:\;
\begin{aligned}
&\overline y\in X \text{ solves \eqref{eq:nonlin-A-control} with $f=0$ and initial condition}\\
& \text{$\overline y_0 \in W^{2,p}(\mathbb T^2;\mathbb R^2) $; and } \|\overline y\|_X\le M,
\, \|y_0-\overline y_0\|_{W^{2,p}(\mathbb T^2)}\le \varepsilon
\end{aligned}
\right\}.
\] 

\begin{proposition}[\bf Local exact controllability]\label{prop: null control of nonlinear projected eq} \sloppy Let \(p>4\), \(T>0\), \(M>0\), \(y_0\in W^{2,p}(\mathbb T^2;\mathbb R^2)\) and \(\omega\subset \mathbb T^2\) be a nonempty open set. Then there exists $\varepsilon_*=\varepsilon_*(M,T,\omega,\alpha,\gamma,p)>0$ 
such that for every target \( \overline y_T \in \mathcal T(\varepsilon_*,M,T,y_0)\), there exists a control $
f\in L^\infty(\omega\times(0,T);\mathbb R^2)$ for which \eqref{eq:nonlin-A-control} with initial datum \(y_0\) and control \(\chi_\omega f\), admits a solution $y \in X $ satisfying 
$y(\cdot,T)=\overline y_T$ in $\mathbb T^2$.

Moreover, if \(Y=\overline y(\cdot,T)\) for some uncontrolled trajectory \(\overline y\in X\) which solves \eqref{eq:nonlin-A-control} with $f=0$ and initial condition $\overline y_0$, then the control can be chosen so that
\[
\|f\|_{L^\infty(\omega\times(0,T))}
\le
C\,\|y_0-\overline y(\cdot,0)\|_{L^2(\mathbb T^2)}.
\]
where $C=C(M,T,\omega,\alpha,\gamma,p)>0$
\end{proposition}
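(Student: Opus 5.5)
Fix an uncontrolled trajectory \(\overline y\in X\) with \(\|\overline y\|_X\le M\) and \(\|y_0-\overline y_0\|_{W^{2,p}}\le\varepsilon\). Here I take \(X=W^{1,p}(0,T;L^p)\cap L^p(0,T;W^{2,p})\), which embeds compactly into \(C([0,T];C^1(\mathbb T^2))\) because \(p>4\). Set \(w:=y-\overline y\). The problem \(y(\cdot,T)=\overline y(\cdot,T)\) becomes a null controllability problem for \(w\) with \(w(\cdot,0)=w_0:=y_0-\overline y_0\). Subtracting the equations for \(y\) and \(\overline y\), and using the fundamental theorem of calculus along \(\overline y+\theta w\), I would write the difference of the nonlinear terms as
\[
A\Big[\Delta w+\sum_{i=1}^2 B_i(\overline y,w)\,\partial_i w+\Gamma(\overline y,w)\,w\Big]+\chi_\omega f .
\]
The coefficients \(B_i,\Gamma\) are bounded matrices depending smoothly on \(\overline y,\nabla\overline y,w,\nabla w\). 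They are pointwise bounded whenever \(\|\overline y\|_{C^1}+\|w\|_{C^1}\le R\) for a fixed \(R\) depending on \(M\). This gives a quasilinear system of the form \eqref{eq. general linear eq.} once the arguments are frozen.

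\textbf{Set-valued map.} Let \(K:=\{\widehat w\in C([0,T];C^1(\mathbb T^2;\mathbb R^2)):\ \|\widehat w\|_{X}\le R_0,\ \widehat w(\cdot,0)=w_0\}\), which is convex and compact in \(C([0,T];C^1)\). For \(\widehat w\in K\), freeze the coefficients \(B_i(\overline y,\widehat w)\) and \(\Gamma(\overline y,\widehat w)\), whose \(L^\infty\) norms are bounded by a constant depending only on \(M,R_0\). Define \(\Lambda(\widehat w)\) as the set of solutions \(w\) of the frozen linear system \eqref{eq. general linear eq.} with data \(w_0\) and \(w(\cdot,T)=0\), driven by some control \(f\) with \(\|f\|_{L^\infty}\le \C{4}\|w_0\|_{L^2}\). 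Here \(\C{4}\) is uniform over \(K\), by the explicit form \eqref{5129} of the constant in \Cref{thm null control of linear eq}.
\begin{itemize}
\item \emph{Nonempty:} \Cref{thm null control of linear eq} provides such an \(f\).
\item \emph{Convex:} the problem is linear and the constraint set on \(f\) is convex.
\item \emph{Maps \(K\) into \(K\):} \Cref{lem higher est} gives \(\|w\|_X\le \C{7}(\|w_0\|_{W^{2,p}}+\C{4}|\omega|^{1/p}T^{1/p}\|w_0\|_{L^2})\le C\varepsilon\), which is at most \(R_0\) once \(\varepsilon\le\varepsilon_*\). The constant \(\C{7}\) is again uniform over \(K\).
\end{itemize}

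\textbf{Closed graph.} Take \(\widehat w_k\to\widehat w\) in \(C([0,T];C^1)\), \(w_k\in\Lambda(\widehat w_k)\) with controls \(f_k\), and \(w_k\to w\). The frozen coefficients then converge uniformly. Extract \(f_k\rightharpoonup^* f\) in \(L^\infty\), and use the uniform \(X\)-bound to get \(w_k\to w\) strongly in \(C([0,T];C^1)\) and weakly in \(X\). Passing to the limit in the weak formulation shows that \(w\) solves the frozen system for \(\widehat w\) with control \(f\). The norm bound on \(f\) survives by weak-* lower semicontinuity, and \(w(\cdot,T)=0\) survives by uniform convergence. Hence \(\Lambda(\widehat w)\) is closed.

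\textbf{Conclusion.} Because the graph is closed in the compact set \(K\times K\), \(\Lambda\) is upper semicontinuous. Condition (2) of \Cref{thm Kakutani} then holds with \(\psi(x)=\{x\}\), so Kakutani gives a fixed point \(w\in\Lambda(w)\). Then \(y=\overline y+w\) solves \eqref{eq:nonlin-A-control} and satisfies \(y(\cdot,T)=\overline y_T\), with \(\|f\|_\infty\le C\|y_0-\overline y_0\|_{L^2}\).

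\textbf{Main obstacle.} I expect the hardest part to be the uniformity of the constants \(\C{4}\) and \(\C{7}\) over \(K\), together with the closed-graph argument. In the closed-graph step, the coefficients depend on \(\nabla\widehat w\), so the strong \(C^1\) convergence supplied by \(p>4\) is essential. A further difficulty is that the quadratic terms in \(\nabla w\) make \(B_i,\Gamma\) depend on \(w\). I would handle this by keeping all \(\nabla w\)-dependence inside the frozen coefficients, which gives bounds linear in \(w\) with coefficients of size \(O(M+R_0)\).
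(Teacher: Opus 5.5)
Your proposal is correct and follows essentially the same route as the paper. Both arguments use the same ingredients: the mean-value linearisation around $\overline y$; a Kakutani fixed point for the set-valued map of null-controlled solutions of the frozen linear system, with controls bounded in $L^\infty$ by a constant from \Cref{thm null control of linear eq} that is uniform over the fixed-point set times $\|w_0\|_{L^2}$; self-mapping via \Cref{lem higher est}; and passage to the limit through the compact embedding of $X$ into $C([0,T];C^1(\mathbb T^2))$.

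The only difference is the topology in which Kakutani is applied. You work on the norm-compact convex set $K\subset C([0,T];C^1(\mathbb T^2))$ and get condition (2) from the closed graph together with compactness. The paper instead works on $\mathcal F_R$ with the weak topology of $X$ and checks the open-preimage condition by a direct sequential contradiction argument.
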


\begin{proof}
We split the proof into several steps.

\smallskip
\noindent
\textbf{Step 1. Subtracting the equations:} Suppose that $y$ is the solution of the equation \eqref{eq:nonlin-A-control} with control $f$ and initial condition $y(\cdot,0)=y_0$; and $\overline{y}$ is the solution of the uncontrolled version of equation \eqref{eq:nonlin-A-control} with control $f=0$ and initial condition $\overline{y}(\cdot,0)=\overline{y}_0$. For $w:=y-\overline{y}$ and $w_0:=y_0-\overline{y}_0$, the exact controllability problem for $y$ to reaching $\overline{y}(\cdot,T)$ at time $T$ is equivalent to the null controllability problem
for $w$:
\begin{equation}\label{5512}
\begin{cases}
\partial_t w
=
A\Delta w
+
A\Bigl[
-2(\nabla(\overline{y}+w))\nabla\log h(\overline{y}+w)
+\dfrac{2|\nabla(\overline{y}+w)|^2}{h(\overline{y}+w)}(\overline{y}+w)
\Bigr]\\[2ex]
\hspace{4.6cm}
-
A\Bigl[
-2(\nabla\overline{y})\nabla\log h(\overline{y})
+\dfrac{2|\nabla\overline{y}|^2}{h(\overline{y})}\,\overline{y}
\Bigr]
+\chi_\omega f,
\\[1ex]
w(\cdot,0)=w_0.
\end{cases}
\end{equation}
By the mean-value theorem, we can reduce the equation in \eqref{5512} and consider the controlled equation
\[
\begin{cases}
\partial_t w
=
A\Big(\Delta w
+
\sum_{i=1}^2 \widetilde B_i(w)\,\partial_i w
+
\widetilde\Gamma(w)w\Big)
+\chi_\omega f,
\\[1ex]
w(\cdot,0)=w_0.
\end{cases}
\]
where
\begin{align*}
\widetilde B_i(z)
=\,& 
\int_0^1
\frac{4}{1+|\overline y+\theta z|^2}
\Bigl\{
(\overline y+\theta z)(\partial_i\overline y+\theta\partial_i z)^\top
-
(\partial_i\overline y+\theta\partial_i z)(\overline y+\theta z)^\top
\\
&\qquad\qquad\qquad\qquad
-
\bigl[(\overline y+\theta z)\cdot(\partial_i\overline y+\theta\partial_i z)\bigr]I
\Bigr\}\,d\theta,
\end{align*}
and
\begin{align*}
\widetilde\Gamma(z)
=\,&
\int_0^1 \Biggl\{
\frac{2|\nabla\overline y+\theta\nabla z|^2}{1+|\overline y+\theta z|^2}\,I
-\frac{4}{1+|\overline y+\theta z|^2}\,
(\nabla\overline y+\theta\nabla z)(\nabla\overline y+\theta\nabla z)^\top
\\
&\qquad\qquad
+\frac{8}{(1+|\overline y+\theta z|^2)^2}
\Bigl[
(\nabla\overline y+\theta\nabla z)(\nabla\overline y+\theta\nabla z)^\top
(\overline y+\theta z)
\Bigr] (\overline y+\theta z)^\top
\\
&\qquad\qquad
-\frac{4|\nabla\overline y+\theta\nabla z|^2}{(1+|\overline y+\theta z|^2)^2}
(\overline y+\theta z)(\overline y+\theta z)^\top
\Biggr\}\,d\theta.
\end{align*}
 
\noindent
\textbf{Step 2. Functional spaces:} Fix $R>1$ (to be specified) and define the space to construct the fixed point
\[
\mathcal{F}_R:=
\Bigl\{
z\in X:\ \|z\|_X\le R,\ z(\cdot,0)=w_0
\Bigr\}.
\]
The interpolation \cite[Chapter III, Theorem 4.10.2]{Amann1995} (between $W^{2,p}$ and $L^p$) yields
\begin{align}\label{5001}
			X=L^p\big(0,T;W^{2,p}\big(\mathbb{T}^2)\big) \cap W^{1,p}\big(0,T;L^p(\mathbb{T}^2)\big)
	\hookrightarrow C\big([0,T];W^{2-2/p,p}(\mathbb{T}^2)\big) \quad \text{continuously}.
\end{align}
Thus, the trace map $\gamma_0$ from $X$ to $W^{2-2/p,p}(\mathbb{T}^2)$ defined by $\gamma_0(z) :=z(\cdot,0)$ is continuous. Therefore, the set $\gamma_0^{-1}(\{w_0\})$ is closed in norm and weak topology, and thus $\mathcal{F}_R$ is weakly compact in $X$. It is clear that the set $\mathcal{F}_R$ is also non-empty and convex.

For a given $z\in \mathcal{F}_R$, consider the linear controlled system
\begin{align}\label{eq:frozen-linear}
\begin{cases}
\partial_t u
=
A\Big(\Delta u
+
\sum_{i=1}^2 \widetilde B_i(z)\,\partial_i u
+
\widetilde\Gamma(z)u\Big)
+\chi_\omega f,
&\text{in }\mathbb T^2\times(0,T),
\\[1ex]
u(\cdot,0)=w_0
&\text{in }\mathbb T^2.
\end{cases}
\end{align}
Since $p>4$, Sobolev embedding \cite[Theorem 8.2]{di2012hitchhiker} implies $
W^{2-2/p,p}(\mathbb{T}^2)\hookrightarrow C^{1,\alpha''}(\mathbb{T}^2)$ continuously for $\alpha''=1-\frac{4}{p}>0,$ and the Arzel{\`a}-Ascoli theorem implies $C^{1,\alpha''}(\mathbb{T}^2)\hookrightarrow\hookrightarrow
C^1(\mathbb{T}^2) $ compactly. Thus, it is clear that 	\begin{align}\label{embedding}
	W^{2-2/p,p}(\mathbb{T}^2)\hookrightarrow\hookrightarrow C^1(\mathbb{T}^2)\hookrightarrow L^p(\mathbb{T}^2).
\end{align} 
Therefore, by the Aubin-Lions-Simon lemma and the embeddings in \eqref{embedding} and \eqref{5001}, we have
\begin{align}
X \hookrightarrow L^\infty(0,T; W^{2-2/p,p}(\mathbb T^2)) \cap W^{1,p}(0,T;L^p(\mathbb T^2))
\hookrightarrow \hookrightarrow C\big([0,T];C^1(\mathbb T^2)\big) \quad \text{compactly.}
\label{5627}
\end{align}
Hence, for $z\in \mathcal{F}_R$,
\[
\|z\|_{C([0,T];C^1(\mathbb T^2))}\lesssim_{\,T,p} R
\]
Since also $
\|\overline{y}\|_{C([0,T];C^1(\mathbb T^2))}\le M$,
the pair $\|(\overline{y}+\theta z,\nabla\overline{y}+\theta\nabla z)\|_{L^\infty} \lesssim_{\,T,p} R+M$. Because $\Gamma$ and $B_i$ are smooth, there exists a constant
$C_{M,R,T,p}>0$ such that
\[
\|\widetilde\Gamma(z)\|_{L^\infty(\mathbb T^2\times(0,T))}
+
\sum_{i=1}^2
\|\widetilde B_i(z)\|_{L^\infty(\mathbb T^2\times(0,T))}
\le C_{M,R,T,p}
\qquad\text{for any } z\in \mathcal{F}_R.
\]
 
By the linear null-controllability result in \Cref{thm null control of linear eq.}, there exists a control
$f_z\in L^\infty\big(\omega\times(0,T);\mathbb{R}^2\big)$ such that the corresponding solution $w_{f_z,z}$ of
\eqref{eq:frozen-linear} satisfies $w_{f_z,z}(\cdot,T)=0$ a.e. in $\mathbb{T}^2$. Moreover, we define the set of admissible controls 

\begin{equation}\label{eq:control-class}
\mathcal C(z)\!:=
\left\{
f\in L^\infty(\omega\times(0,T);\mathbb{R}^2) 
:\!
\begin{array}{l}
\text{the solution $w_{f,z}$ of \eqref{eq:frozen-linear} satisfies $w_{f,z}(\cdot,T)=0$}
,\\[2mm]
\text{a.e. in $\mathbb{T}^2$ and } \|f\|_{L^\infty(\omega\times(0,T))}\le C_R\,\|w_0\|_{L^2(\mathbb{T}^2)}
\end{array}
\right\}
\end{equation}
for some $C_R>0$ determined later.

For any $z\in \mathcal{F}_R$ and any $f_z\in\mathcal C(z)$, the maximal $L^p$-regularity in \Cref{lem higher est} yields  
\begin{equation}\label{eq:maxreg}
	\|w_{f_z,z}\|_{L^{p}(0,T;W^{2,p})}+\|\partial_t w_{f_z,z}\|_{L^{p}(0,T;L^{p})}
	\le \C{7}\Big(\|w_0\|_{W^{2,p}}+\|f_z\|_{L^{p}(\omega\times (0,T))}\Big).
\end{equation} 
Since $f_z\in \mathcal C(z)$, we can use 
\eqref{eq:control-class} to establish $\|f_z\|_{L^{p}(\omega\times (0,T))}\lesssim \|f_z\|_{\infty} \leq C_R\|w_0\|_{L^2}$. Therefore, we choose $ \varepsilon _\ast>0$ in the assumptions so small that 
\begin{align}\label{5733}
2 \C{7}(1+C_R)\,\varepsilon_\ast\le R.
\end{align}
Then for all $ w_0\in W^{2,p}(\mathbb{T}^2;\mathbb{R}^2)$ with $\|w_0\|_{W^{2,p}}\le\varepsilon_\ast$, every corresponding $w_{f_z,z}$ produced from \eqref{eq:frozen-linear} with
$f_z\in\mathcal C(z)$ satisfies $\|w_{f_z,z}\|_{X}\le R$. Therefore, $w_{f_z,z} \in \mathcal{F}_R$ for any $z\in \mathcal{F}_R$. Hence, we can define the multivalued map $\Phi:\mathcal{F}_R\to 2^{\mathcal{F}_R}$ by
\begin{align}\label{def fixed point map Phi}
	\Phi(z):=\Big\{w\in \mathcal{F}_R:\ w=w_{f_z,z}\text{ solves \eqref{eq:frozen-linear} for some }f=f_z\in\mathcal C(z)\Big\}.
\end{align}
 
\noindent\textbf{Step 3. Properties of Kakutani map  $\Phi$ (item (1) of \Cref{thm Kakutani}):} By construction, $\Phi(z)$ is non-empty for every $z\in \mathcal{F}_R$. The convexity of $\Phi(z)$ is trivial as the \eqref{eq:frozen-linear} is linear in the unknown $u$ and also linear in control term, for fixed $z\in \mathcal{F}_R$. To verify compactness, let $\{w_n\}_{n\in \mathbb{N}} \subset \Phi(z)$ and assume that
\[
w_n\rightharpoonup w_* \qquad\text{weakly in }X.
\] By \eqref{eq:maxreg},
$\{w_n\}_{n\in \mathbb{N}}$ is bounded in $X$. Therefore, by the compact embedding in \eqref{5627}, we have $w_*\in C\big([0,T];C^1(\mathbb T^2)\big)$ such that up to a subsequence, 
\[
w_n\to w_*\qquad\text{strongly in }C([0,T];C^1(\mathbb T^2)).
\]
For each $n \in \mathbb{N}$, we choose $f_n\in\mathcal C(z)$ such that $w_n$ solves the equation  
\[
\int_{\mathbb{T}^n\times(0,T)} \bigl[- w_n\cdot \partial_t\varphi
+ \nabla (Aw_n) \cdot \nabla\varphi
-  \sum^2_{i=1}(A\widetilde{B}_i(z) \partial_i w_n)\cdot \varphi
- (A \widetilde{\Gamma}(z)w_n)\cdot \varphi \bigr]
= \int_{\omega\times(0,T)} f_n\cdot \varphi ,
\]
for any $\varphi\in C_c^\infty(\mathbb{T}^n\times(0,T);\mathbb R^2)$. By the uniform bound in $\mathcal C(z)$, after extraction of subsequence,
\[
f_n\rightharpoonup f_* \qquad\text{weak-star in }L^\infty(\omega\times(0,T)).
\] 
Passing to the limit in the equation, we infer that $w_*$ solves the equation \eqref{eq:frozen-linear} with control $f=f_*$.
The strong convergence in $C([0,T];C^1(\mathbb T^2))$ yields $w_*(\cdot,T)=0$. Therefore, $f_* \in \mathcal{C}(z)$ and $w_*\in\Phi(z)$. This shows that $\Phi(z)$ is weakly closed in $\mathcal{F}_R$. Since $\mathcal{F}_R$ is weakly compact in $X$, the set $\Phi(z)$ is weakly compact as a subset of $X$. \\
  
\noindent\textbf{Step 4. Upper semicontinuity (item (2) of \Cref{thm Kakutani}):} We want to prove that for every $\ell\in X'$ and $\alpha\in\mathbb R$, with \[
H:=\{y\in X:\ell(y)<\alpha\},
\qquad
U_\Phi(H):=\{z\in \mathcal{F}_R:\Phi(z)\subset H\},
\]
the set $U_\Phi(H)$ is open in $\mathcal{F}_R$. It is equivalent to show that any $z \in U_\Phi(H)$ is an interior point of $U_\Phi(H)$.

Assume, by contradiction, there exists $z_\dagger \in U_\Phi(H)$ and a sequence $\{z_n\}_{n\in\mathbb N}\subset \mathcal{F}_R\setminus U_\Phi(H)$ such that
$z_n \rightharpoonup z_\dagger$ in $\mathcal{F}_R$.
For each $n \in \mathbb{N}$, since $z_n\notin U_\Phi(H)$, we have $\Phi(z_n)\not\subset H$; hence we can choose $w_n\in \Phi(z_n)\setminus H$, so that 
\begin{align}\label{5337}
\ell(w_n)\ge \alpha.
\end{align} By the definition of $\Phi$ (see \eqref{def fixed point map Phi}), there exists
a control $f_n\in \mathcal{C}(z_n)$ such that $w_n$ solves the linear system
\[
\partial_t w_n
=
A\Big(\Delta w_n
+
\sum_{i=1}^2 \widetilde B_i(w_n)\,\partial_i w_n
+
\widetilde\Gamma(w_n)w_n\Big)
+\chi_\omega f_n,
\quad\text{in }Q,
\quad
w_n(\cdot,0)=w_0,
\quad
w_n(\cdot,T)=0.
\]
Moreover, since $f_n\in \mathcal{C}(z_n)$, the controls are uniformly bounded in $L^\infty$ by \eqref{eq:control-class} and hence $\|w_n\|_X$ is uniformly bounded by \eqref{eq:maxreg}. Hence there is $w_\dagger \in X$ such that $w_n \rightharpoonup w_\dagger$ in $X$. By the similar compactness arguments in Step 2, we may (after possibly extracting further subsequence) assume $
z_n\to z_\dagger$, $w_n\to w_\dagger$ in $C\big([0,T];C^1(\mathbb T^2)\big)$ strongly and $f_n \rightharpoonup f_\dagger$ weak-star in $L^\infty\big(\omega\times(0,T);\mathbb R^2\big)$. In particular, passing to the limit at $t=T$ gives $w_\dagger(\cdot,T)=\lim_n w_n(\cdot,T)=0$. We also have
\[
\|z_n-z_\dagger\|_{L^\infty(Q)}+\|\nabla z_n-\nabla z_\dagger\|_{L^\infty(Q)}\to 0.
\] 
Therefore,
\[
B(z_n)\to B(z)\quad\text{and}\quad c(z_n)\to c(z)\quad\text{in }L^\infty(Q).
\]
Passing limit in the equation, we infer that $w_\dagger$ solves the equation \eqref{eq:frozen-linear} with control $f=f_\dagger$ and $z=z_\dagger$. Hence $w_\dagger\in \Phi(z_\dagger)$ by definition of $\Phi$.

Since $z_\dagger\in U_\Phi(H)$ by the assumption, we have $\Phi(z_\dagger)\subset H$, hence $w_\dagger\in H$ and thus $\ell(w_\dagger)<\alpha$.
On the other hand, $\ell$ is continuous on $X$ under the weak topology and $w_n\rightharpoonup w_\dagger$ weakly in $X$, so \eqref{5337} implies
\[
\ell(w_\dagger)=\lim_{n\to\infty}\ell(w_n)\ge \alpha,
\]
contradicting $\ell(w_\dagger)<\alpha$, so $U_\Phi(H)$ is open in $\mathcal{F}_R$.

Therefore $\Phi$ satisfies the hypotheses of \Cref{thm Kakutani}. Hence, by setting one map to be $\Phi$ and another map to be identity in \Cref{thm Kakutani}, there exists
$w\in \mathcal{F}_R$ such that $w=\Phi(w)$. For this fixed point, $w$ solves \eqref{eq:frozen-linear}
with $z=w$, i.e.\ it solves \eqref{eq:nonlin-A-control}, and satisfies $w(\cdot,T)=0$. The $L^\infty$ estimate on $f$ follows from \eqref{eq:control-class}. Now, we can choose $R>0$ arbitrarily and $C_R$ is set to be the constant $\C{4}$ obtained in \Cref{thm null control of linear eq} with the fixed choices of $B$ and $\Gamma$ in this proof.
 
\end{proof}

		\begin{lemma}\label{lem recovery of control from proj eq} Let $v$ and $f\in\mathbb R^2$. Then there exists $u=u(v,f)\in\mathbb R^3$ taking the form 		\begin{equation*}
			u(v,f)=\nabla_v\Psi_0(v)\,(\alpha I+\gamma J)^{-1}f.
		\end{equation*} such that the following identity holds:
		\begin{equation*}
			(\alpha I+\gamma J)\left\{\frac{h(v)^2}{4}\,(\nabla_v\Psi_0(v))^{ \top}u\right\}=f.
		\end{equation*}  

	\end{lemma}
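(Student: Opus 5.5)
The identity reduces to a Gram-matrix computation for the stereographic parametrisation. Substituting the proposed $u$, we need
\[
(\alpha I+\gamma J)\,\frac{h(v)^2}{4}\,(\nabla_v\Psi_0(v))^{\top}\nabla_v\Psi_0(v)\,(\alpha I+\gamma J)^{-1}f=f .
\]
It therefore suffices to show that the $2\times2$ Gram matrix satisfies
\[
G(v):=(\nabla_v\Psi_0(v))^{\top}\nabla_v\Psi_0(v)=\frac{4}{h(v)^2}\,I ,
\]
that is, $\Psi_0$ is conformal with conformal factor $2/(1+|v|^2)$. Once this holds, the scalar multiple of $I$ commutes with $A^{-1}$ and the left-hand side collapses to $AA^{-1}f=f$. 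Invertibility of $A=\alpha I+\gamma J$ is immediate because $\det A=\alpha^2+\gamma^2>0$ (as $\alpha>0$), so $u$ is well defined.

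To verify the conformality, I will differentiate \eqref{def. stereographic projection and inverse} directly. Write $h=1+|v|^2$. Then
\[
\partial_{v_j}\Psi_0=\frac{2}{h}\bigl(e_j,\,0\bigr)^{\top}-\frac{4v_j}{h^2}\bigl(v_1,\,v_2,\,1\bigr)^{\top},\qquad j=1,2,
\]
using $\partial_{v_j}\frac{1-|v|^2}{h}=-\frac{4v_j}{h^2}$.

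Next I will compute the inner products. Pairing two such vectors gives
\[
\partial_{v_j}\Psi_0\cdot\partial_{v_k}\Psi_0
=\frac{4}{h^2}\delta_{jk}-\frac{16v_jv_k}{h^3}+\frac{16v_jv_k}{h^4}\bigl(|v|^2+1\bigr)
=\frac{4}{h^2}\delta_{jk}.
\]
The cross term arises twice, each time equal to $-\frac{8v_jv_k}{h^3}$, and $|v|^2+1=h$ makes the last two terms cancel. This yields $G(v)=\frac{4}{h^2}I$ and completes the argument.

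There is no real obstacle here; the only care needed is keeping the index conventions straight, namely that $\nabla_v\Psi_0$ is the $3\times2$ Jacobian, consistent with its use in \eqref{eq. projected eq}. As a cross-check, I could instead note that $\Psi_0$ is the standard conformal inverse stereographic map with metric pullback $\frac{4}{(1+|v|^2)^2}|dv|^2$.
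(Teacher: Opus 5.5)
Your proposal is correct and follows essentially the same route as the paper. The paper's proof simply cites the Gram identity $(\nabla_v\Psi_0)^\top\nabla_v\Psi_0=\frac{4}{h^2}I$ from the appendix. You additionally verify that identity by direct differentiation, and note that $A$ is invertible because $\det A=\alpha^2+\gamma^2>0$.
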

	
	\begin{proof} It is a direct consequence of \eqref{eq. D Psi top D Psi = 4I/h^2 and inverse of D Psi}.
	\end{proof}  

    \subsection{Controllability with small initial energy}\label{sec. Controllability with small initial energy}
	
	\begin{proof}[\bf Proof of \Cref{thm:nullcontrol}]  
    
By Lemma \ref{lem:small-energy-const}, there is $p_0 \in \mathbb{S}^2$ such that $$\|m_0-p_0\|_{H^1(\mathbb T^2)}\lesssim \sqrt{E(m_0)}\leq \sqrt{\varepsilon}.$$ 
Let $\varepsilon_1>0$ and fix any target $m_T \in \mathcal{T}(\varepsilon_1,T,m_0)$. We can find $\overline{m}^{\varepsilon_1}_0\in  H^1(\mathbb T^2;\mathbb R^2)$ such that $\|m_0-\overline{m}^{\varepsilon_1}_0\|_{H^1}\leq \varepsilon_1$ and $m_T=\overline{m}^{\varepsilon_1}(\cdot,T)$ where $\overline{m}^{\varepsilon_1}$ solves the uncontrolled equation \eqref{eq. uncontrolled LLG}, subject to initial condition $\overline{m}^{\varepsilon_1}_0$. Then, we have
$$\|\overline{m}^{\varepsilon_1}_0-p_0\|_{H^1(\mathbb T^2)} \leq
\|\overline{m}^{\varepsilon_1}_0-m_0\|_{H^1(\mathbb T^2)}
+\|m_0-p_0\|_{H^1(\mathbb T^2)}
\leq \sqrt{\varepsilon} + \varepsilon_1.$$
Let $T_0=T/2>0$, $\varepsilon_*>0$ and $p>4$. If we choose $\varepsilon$ and $\varepsilon_1$ small enough (depending on $\varepsilon_*$, $\alpha$, $\gamma$, $T$ and $p$), then Lemma~\ref{lem:LLG-hemisphere-entry} implies that there is $t_0\in(0,T_0]$ such that the solution $m^{0,*}$ of the uncontrolled equation \eqref{eq. uncontrolled LLG}, subject to initial condition $m_0$, satisfies  
\begin{align}
&\|m^{0,*}(\cdot,t_0)-p_0\|_{L^\infty(\mathbb T^2)}+
\|m^{0,*}(\cdot,t_0)-\overline{m}^{\varepsilon_1}(\cdot,t_0)\|_{L^\infty(\mathbb T^2)}
+
\|m^{0,*}(\cdot,t_0)-\overline{m}^{\varepsilon_1}(\cdot,t_0)\|_{W^{2,p}(\mathbb T^2)}\nonumber\\
&\le \varepsilon_*.
\label{5750}
\end{align}
If we simply set either one of the initial conditions in  Lemma~\ref{lem:LLG-hemisphere-entry} to be $p_0$, then we obtain
\begin{align}\label{5753}
\|m^{0,*}(\cdot,t_0)-p_0\|_{W^{2,p}(\mathbb T^2)}
+
\| \overline{m}^{\varepsilon_1}(\cdot,t_0) - p_0\|_{W^{2,p}(\mathbb T^2)}
\le \varepsilon_*.
\end{align}

Note that the uncontrolled equation \eqref{eq. uncontrolled LLG} is rotationally invariant in the sense that if $m$ is a strong solution of \eqref{eq. uncontrolled LLG}, subject to the initial condition $m_0$, then $\mathcal{R}m$ is also a strong solution of \eqref{eq. uncontrolled LLG}, subject to the initial condition $\mathcal{R}m_0$ for any $\mathcal{R}\in SO(3)$. We take a $\mathcal{R}_0\in SO(3)$ such that $\mathcal{R}_0 p_0=e_3$. Hence, by \eqref{5753}, \eqref{5750} and Sobolev embedding, we have
\begin{align*}
&\left\|\Psi_0^{-1}\big(\mathcal{R}_0m^{0,*}(\cdot,t_0)\big)\right\|_{L^\infty(\mathbb T^2)}
+
\left\|\Psi_0^{-1}\big(\mathcal{R}_0m^{0,*}(\cdot,t_0)\big)-\Psi_0^{-1}\big(\mathcal{R}_0\overline{m}^{\varepsilon_1}(\cdot,t_0)\big)\right\|_{L^\infty(\mathbb T^2)}\\
&+
\left\|\Psi_0^{-1}\big(\mathcal{R}_0m^{0,*}(\cdot,t_0)\big)-\Psi_0^{-1}\big(\mathcal{R}_0\overline{m}^{\varepsilon_1}(\cdot,t_0)\big)\right\|_{W^{2,p}(\mathbb T^2)}\\
&\lesssim \varepsilon_*.
\end{align*}
Let $T^*_0=T-t_0>0$. Moreover, if we shrink $\varepsilon>0$ and $\varepsilon_1>0$ further (depending on $\alpha$, $\gamma$, $T_0$, and $p$), then $\varepsilon_*$ can be chosen so small that we can apply \Cref{prop: null control of nonlinear projected eq} to deduce that there is a control $f_0 \in L^\infty \big(\omega \times (0,T_0^*);\mathbb{R}^2\big)$ such that the equation \eqref{eq:nonlin-A-control}, subject to initial condition $\Psi_0^{-1}\big(\mathcal{R}_0m^{0,*}(\cdot,t_0)\big)$ and control $f_0$, has a solution $y_0$ such that $y_0(\cdot,T_0^*)=Y$ in $\mathbb{T}^2$. Here the target is $Y=\overline{y}(\cdot,T^*_0)$ where $\overline{y}$ solves \eqref{eq:nonlin-A-control} with $f=0$ and initial condition $\overline y_0 = \Psi_0^{-1}\big(\overline{m}^{\varepsilon_1}(\cdot,t_0)\big) \in W^{2,p}(\mathbb T^2;\mathbb R^2) $.

    By \Cref{lem recovery of control from proj eq}, there is $u_0=u_0(y_0,f_0)$ such that 
	$$ 			A\left\{\frac{h(y_0)^2}{4}\,(\nabla_v\Psi_0(y_0))^{ \top}u_0\right\}=f_0. $$
    Therefore, by \eqref{eq. projected eq}, we can transform equation \eqref{eq:nonlin-A-control} back to the controlled LLG equation \eqref{eq. controlled LLG}. Hence, the solution $m^{0,u_0}:=\Psi_0(y_0)$ of the controlled LLG equation \eqref{eq. controlled LLG}, subject to initial condition $\mathcal{R}_0m^{0,*}(\cdot,t_0)$ and control $u_0$, satisfies $m^{0,u_0}(\cdot,T_0^*)=\Psi_0\big(Y\big)$. We can paste the uncontrolled solution $m^{0,*}$ and controlled solution $\mathcal{R}_0^{-1}m^{0,u_0}$ to form a new solution $$m^0(x,t):=\chi_{[0,t_0]}m^{0,*}(x,t)+\chi_{(t_0,t_0+T^*_0]}\mathcal{R}_0^{-1} m^{0,u_0}(x,t-t_0)$$ 
    of the controlled equation \eqref{eq. controlled LLG} from $t=0$ to $t_0+T^*_0=T$, subject to initial condition $m_0$ and control $\chi_{\omega\times (t_0,t_0+T^*_0]}\mathcal{R}_0^{-1}u_0(x,t-t_0)$. Note that $m^0(\cdot,t_0+T^*_0)=\Psi_0\big(Y\big)=m_T$ in $\mathbb{T}^2$. 
		
	\end{proof}

The main obstacle prevent us to steer from a particular constant state to any constant state is that $\varepsilon_*$ in \Cref{prop: null control of nonlinear projected eq} depends on $T$, more precisely,
Here the constant 
        \begin{align}\label{5871}
		\varepsilon_* \lesssim \dfrac{e^{C_{\alpha}T}}{P_{\alpha}(T)\pig[1+e^{C_{\alpha}\frac{1}{T}}\pig]}
		\end{align}
        from \eqref{5129} and \eqref{5733}. Here $P_{\alpha}(T)$ is a polynomial in $T$ (may have negative degree and the degrees are universal) with coefficient depending on $\alpha$. Suppose now we have $m(\cdot,0)=p_0$ and we want to drive it to $m(\cdot,T)=p$ for any $T>0$ and $p>0$. We have to cut the geodesic connecting $p_0$ and $p$ into $N \in \mathbb{N}$ uniform pieces such that $\dfrac {|p_0-p|}{ N} \lesssim \varepsilon_{*,N,T}\lesssim  \dfrac{e^{C_{\alpha}\frac{T}{N}}}{P_{\alpha}(\frac{T}{N})\pig(1+e^{C_{\alpha}\frac{N}{T}}\pig)}$. However, the existence of such $N$ cannot be ensured for arbitrary $T>0$ and $p\in \mathbb{S}^2$. Moreover, the work \cite{DUYCKAERTS20081} shows that the constant $\C{4}$ in \eqref{5129} is sharp and hence $\varepsilon_*$. Therefore, the best exact controllability to any constant state is the one in \Cref{cor:null control} in which the terminal time can not be too small, if we proceed in Carlman estimate and observability method.
 
 	\begin{proof}[\bf Proof of \Cref{cor:null control}]
    
    Fix the time horizon in \Cref{thm:nullcontrol} to be \(1\) for simplicity, and let \(\varepsilon>0\) be the corresponding smallness constant. Choose \(\varepsilon_{1}>0\) so small that whenever
\(E(m_{0})<\varepsilon_{1}\), \Cref{lem:small-energy-const} yields a constant
\(p_{0}\in \mathbb S^{2}\) such that
\[
\|m_{0}-p_{0}\|_{H^{1}(\mathbb T^{2})}\le \frac{\varepsilon}{2}.
\]
Let \(p^{*}\in \mathbb S^{2}\) be arbitrary. We first steer \(m_{0}\) to \(p_{0}\), and then
steer \(p_{0}\) to \(p^{*}\) through finitely many nearby constant states.

Since the uncontrolled solution of \eqref{eq. uncontrolled LLG} with constant initial datum
\(p_{0}\) is just the constant map \(p_{0}\), the state \(p_{0}\) belongs to the target set $\mathcal{T}(\varepsilon,1,m_0)$ in \Cref{thm:nullcontrol}. Therefore, \Cref{thm:nullcontrol} gives a control $u_{0}\in L^{\infty}(0,1;L^{2}(\mathbb T^{2}))$ such that the corresponding unique solution \(m^{0,u_{0}}\) of
\eqref{eq. controlled LLG}, subject to control $u_0$ and initial condition $m_0$, satisfies
\[
m^{0,u_{0}}(\cdot,1)=p_{0}.
\]

Next, choose \(N\in \mathbb N\) and constants $p^{1},p^{2},\dots,p^{N-1}, p^{*}\in \mathbb S^{2}$ on a minimizing geodesic joining \(p_{0}=p^0\) and \(p^{*}\), with
\(\operatorname{dist}_{\mathbb S^{2}}(p^{i-1},p^{i})\) independent of \(i\). Then
\[
\operatorname{dist}_{\mathbb S^{2}}(p^{i-1},p^{i})
=\frac{1}{N}\operatorname{dist}_{\mathbb S^{2}}(p_{0},p^{*})
\le \frac{\pi}{N},
\qquad i=1,\dots,N.
\] 
As \(p^{i}-p^{i-1}\) is constant on \(\mathbb T^{2}\),
\[
\|p^{i}-p^{i-1}\|_{H^{1}(\mathbb T^{2})} 
\le \frac{2\pi^{2}}{N}.
\]
Choose \(N\) so large that $
\frac{2\pi^{2}}{N}\le \frac{\varepsilon}{2}.$ Then, for every \(i=1,\dots,N\),
\[
\|p^{i}-p^{i-1}\|_{H^{1}(\mathbb T^{2})}\le \frac{\varepsilon}{2}<\varepsilon.
\]

Now fix \(i\in\{1,\dots,N\}\). The uncontrolled solution of \eqref{eq. uncontrolled LLG} with initial datum \(p^{i}\) is again the constant map \(p^{i}\). Therefore, \Cref{thm:nullcontrol} yields a control $
u_{i}\in L^{\infty}(0,1;L^{2}(\mathbb T^{2}))$ such that the corresponding unique solution \(m^{i,u_{i}}\) of \eqref{eq. controlled LLG} with initial datum \(p^{i-1}\) and control $u^i$ satisfies $
m^{i,u_{i}}(\cdot,1)=p^{i}.$ We now concatenate these controls. Define \(u\) on \((0,N+1)\) by
\[
u(\cdot,t):=
\begin{cases}
u_{0}(\cdot,t), & t\in (0,1),\\[1mm]
u_{i}(\cdot,t-i), & t\in (i,i+1),\quad i=1,\dots,N.
\end{cases}
\]
Then \(u\in L^{\infty}(0,N+1;L^{2}(\mathbb T^{2}))\). Set $T=N+1$ and let \(m^{u}\) be the corresponding solution of \eqref{eq. controlled LLG} with initial datum
\(m_{0}\) and control $u$, we have \(m^{u}(\cdot,T)=p^{*}\). 
\end{proof}

\subsection{Controllability with hemisphere condition}\label{sec. Controllability with hemisphere condition}

We first consider the equation \eqref{eq:LLG-two-stage} when $B(x,t)=H p$ for some constant $H\in \mathbb{R}$ and $p\in \mathbb{S}^2$. We choose $\mathcal{R}\in SO(3)$ with $\mathcal{R}p=e_3$ and let $m$ be solution of \eqref{eq:LLG-two-stage}. Then $\mathcal{R}m$ solves \eqref{eq:LLG-two-stage} with $p$ replaced by $e_3$ and initial
datum $\mathcal{R}m_0$. We therefore assume $p=e_3$ and work in stereographic coordinates in \eqref{def. stereographic projection and inverse} with $ m=\Psi_0(z)$ with $  z \in \mathbb{C}$. 
The initial datum
$z_0:=(m_{0,1}+im_{0,2})/(1+m_{0,3})$
belongs to $H^2(\mathbb{T}^2;\mathbb{C})$ as $
\inf_{x\in \mathbb T^2} m_0(x)\cdot e_3>0$.

Using the projected equation in \eqref{eq. projected eq}, the equation \eqref{eq:LLG-two-stage} becomes
\begin{equation}
\label{eq:z-equation}
  \partial_t z
  =(\alpha+i\gamma)\bigl[\Delta z-Hz+\mathcal{N}(z)\bigr],
  \qquad
  z(\cdot,0)=z_0(\cdot)
\end{equation}
where\[
  \mathcal{N}(z)
  :=-\frac{2\overline{z}}{1+|z|^2}\sum_{j=1}^2(\partial_j z)^2.
\]
We recall that $\beta := \alpha - i\gamma$ and prove the lemma
\begin{lemma}[\bf Global existence under a large constant field]
\label{lem:z-existence-large-H}
For any $z_0\in H^2(\mathbb{T}^2;\mathbb{C})$, there exists $H_* = H_*(z_0,\alpha,\gamma) > 0$ such that, for every $L>0$ and $H \ge H_*$,
equation~\eqref{eq:z-equation} admits a strong solution $z \in C\!\left([0,L];H^2(\mathbb{T}^2)\right)\cap L^2\!\left(0,L;H^3(\mathbb{T}^2)\right)$. 
\end{lemma}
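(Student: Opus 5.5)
We will prove Lemma~\ref{lem:z-existence-large-H} by local well-posedness in $H^2$ plus an a priori bound uniform in time, obtained from the damping $-Hz$ for $H$ large. First, \eqref{eq:z-equation} is a semilinear parabolic equation with principal part $(\alpha+i\gamma)\Delta$, whose real part $\alpha>0$ gives analytic-semigroup smoothing. The nonlinearity $\mathcal N(z)$ is smooth in $(z,\nabla z)$; it is quadratic in $\nabla z$ and vanishes to third order at $z=0$. Since $H^2(\mathbb T^2)$ is an algebra embedded in $L^\infty$, a Galerkin or contraction argument in $C([0,\tau];H^2)\cap L^2(0,\tau;H^3)$ gives a unique local strong solution. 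The existence time $\tau$ depends only on $\|z_0\|_{H^2}$, and the solution continues as long as $\|z(t)\|_{H^2}$ stays bounded. So it suffices to show that $\|z(t)\|_{H^2}\le C(z_0)$ for all $t$ of existence, with $C(z_0)$ independent of $L$.

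\textbf{A priori estimate.} Write $K:=2\|z_0\|_{H^2}+1$ and let $t^*$ be the first time with $\|z(t)\|_{H^2}=K$. Test the equation with $\Delta^2 z$ at the $H^2$ level (equivalently, multiply $\Delta z$ by $\overline{\Delta z}$ after differentiating twice) and take real parts. Since $\mathrm{Re}(\alpha+i\gamma)=\alpha$, this gives
\[
\frac12\frac{d}{dt}\|\Delta z\|_{L^2}^2+\alpha\|\nabla\Delta z\|_{L^2}^2+\alpha H\|\Delta z\|_{L^2}^2
\le |\alpha+i\gamma|\,\bigl|\langle \Delta\mathcal N(z),\Delta z\rangle\bigr|.
\]
The $L^2$ level works the same way and produces $\alpha H\|z\|_{L^2}^2$. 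On $[0,t^*]$ we estimate $\|\nabla\mathcal N(z)\|_{L^2}\le C_K\|z\|_{H^3}$ using $H^2\hookrightarrow L^\infty$, $W^{1,4}$ bounds, and Gagliardo--Nirenberg. After one integration by parts, Young's inequality then bounds the right-hand side by $\frac{\alpha}{2}\|\nabla\Delta z\|^2+C_{K,\alpha,\gamma}\|z\|_{H^2}^2$.

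\textbf{Closing the bootstrap.} Choose $H_*$ with $\alpha H_*\ge 2C_{K,\alpha,\gamma}$. Then $\frac{d}{dt}\|z\|_{H^2}^2\le -\alpha H\|z\|_{H^2}^2\le 0$ on $[0,t^*]$, so $\|z(t)\|_{H^2}\le\|z_0\|_{H^2}<K$. This contradicts the definition of $t^*$ unless $t^*$ is at least the maximal existence time. Hence the solution never blows up, exists on $[0,L]$ for every $L$, and integrating the dissipation term gives the $L^2(0,L;H^3)$ bound. The constant $H_*$ depends only on $K$, hence on $z_0$, $\alpha$ and $\gamma$. As a byproduct, $\|z(t)\|_{H^2}\le e^{-\alpha Ht/2}\|z_0\|_{H^2}$, which is the smallness later used in the proof of Corollary~\ref{cor:two-stage-global-a}.

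\textbf{Main obstacle.} The delicate point is the nonlinear estimate at top order: we must control $\langle\nabla\mathcal N(z),\nabla\Delta z\rangle$ using only the quantity $K$. The term $\overline z(\partial_j z)\nabla\partial_j z$ requires the two-dimensional Ladyzhenskaya/Agmon inequalities, $\|\nabla z\|_{L^\infty}\lesssim\|z\|_{H^2}^{1/2}\|z\|_{H^3}^{1/2}$, so that the bound is at most $C_K\|z\|_{H^3}^{2-\theta}\|z\|_{H^2}^{\theta}$ with $\theta>0$ and can be absorbed into the dissipation. If $\theta=0$ occurs for some term, we would instead exploit the cubic vanishing of $\mathcal N$. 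In that case we take $K$ tied to $\|z_0\|_{H^2}$ and use the decay from the $-Hz$ term to reduce to $\|z\|_{L^\infty}$ small before closing.
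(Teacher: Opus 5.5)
This is correct and is essentially the paper's argument. Both run an $H^2$ energy estimate in which, on the bootstrap region $\|z\|_{H^2}<K$, the nonlinear terms are bounded by $\varepsilon\|z\|_{H^3}^2+C_K\|z\|_{H^2}^2$ and then dominated by the damping $\alpha H\|z\|_{H^2}^2$, so the $H^2$ norm is nonincreasing. The differences are routine: the paper runs this on Galerkin approximants and passes to the limit by Aubin--Lions rather than using a local theory with blow-up alternative, and it bounds $\|\mathcal N(z)\|_{H^2}\|z\|_{H^2}$ directly instead of integrating by parts.

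One correction: your intermediate bound $\|\nabla\mathcal N(z)\|_{L^2}\le C_K\|z\|_{H^3}$ does not give $C_K\|z\|_{H^2}^2$ after Young. What closes the estimate is the refined bound from your last paragraph, $\|\nabla\mathcal N(z)\|_{L^2}\lesssim K^2\|z\|_{H^2}+K\|z\|_{H^2}^{1/2}\|z\|_{H^3}^{1/2}$. With it, $\theta\ge\tfrac12$ for every term, so the $\theta=0$ fallback is never needed.
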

\begin{proof}
We proceed in four steps.


\noindent\textbf{Step 1. Galerkin approximation:}
For $N \ge 1$, let $P_N$ denote the $L^2$-orthogonal projection of Fourier series
\[
  P_N f := \sum_{|k| \le N} \widehat{f}(k)\,e^{ik\cdot x}, \qquad
  \widehat{f}(k) := \frac{1}{(2\pi)^2}\int_{\mathbb{T}^2} f(x)\,e^{-ik\cdot x}\,dx,
\]
onto $E_N := \operatorname{span}\{e^{ik\cdot x} : |k| \le N\}$.
We seek $z_N \in E_N$ satisfying the Galerkin equation
\begin{equation}
\label{eq:Galerkin-z}
  \partial_t z_N = \overline{\beta}\,P_N\!\bigl[\Delta z_N - Hz_N + \mathcal{N}(z_N)\bigr],
  \qquad z_N(\cdot,0) = P_N z_0(\cdot).
\end{equation}
Writing $z_N(x,t) = \sum_{|k|\le N} a_k^N(t)\,e^{ik\cdot x}$, equation~\eqref{eq:Galerkin-z}
is equivalent to
\begin{equation}
\label{eq:ode-coefficients}
  \dfrac{d}{dt} a_k^N = \overline{\beta}\!\left[-(|k|^2 + H)a_k^N + \widehat{\mathcal{N}(z_N)}(k)\right],
  \qquad |k| \le N.
\end{equation}
This is a system of ODEs whose right-hand side is locally Lipschitz in
$a^N = (a_k^N)_{|k|\le N} \in \mathbb{C}^{d_N}$ (where $d_N := |\{k\in\mathbb{Z}^2:|k|\le N\}|$).
By the Picard--Lindelöf theorem, there exists a unique maximal solution
$a^N \in C^1([0,T_N^*);\mathbb{C}^{d_N})$ for some $T_N^* \in (0,\infty]$,
giving $z_N \in C^1([0,T_N^*);E_N) \subset C^1([0,T_N^*);C^\infty(\mathbb{T}^2))$.

 
\noindent\textbf{Step 2. Uniform $H^2$ energy estimate:}
Set
\[
  Z_N(t) := \|(1-\Delta)z_N(\cdot,t)\|_{L^2(\mathbb{T}^2)}^2,
\] 
which satisfies $\|z_N(\cdot,t)\|_{H^2(\mathbb{T}^2)}^2 \lesssim Z_N(t) \lesssim \|z_N(\cdot,t)\|_{H^2(\mathbb{T}^2)}^2$. 
Applying $1-\Delta$ to~\eqref{eq:Galerkin-z} and 
taking the real part of the $L^2$-inner product with $(1-\Delta)z_N$, we use the fact that
$P_N$, $\nabla$, and $\Delta$ mutually commute on $E_N$, together with
integration by parts on $\mathbb{T}^2$, to obtain
\begin{equation}
\label{eq:H3-energy}
  \frac{1}{2}\frac{d}{dt}Z_N + \alpha\|(1-\Delta)z_N\|_{L^2(\mathbb{T}^2)}^2
  + \alpha H Z_N
  = \operatorname{Re}\!\left[\,
    \overline{\beta}\bigl\langle(1-\Delta)\mathcal{N}(z_N),\,(1-\Delta)z_N\bigr\rangle_{L^2(\mathbb{T}^2)}
  \right].
\end{equation}

Write $\mathcal{N}(w) = -\mathcal{N}_1(w) \mathcal{N}_2(w)$ with
$\mathcal{N}_1(w) := \frac{2\overline{w}}{1+|w|^2}$ and $\mathcal{N}_2(w) := \sum_{j=1}^{2}(\partial_j w)^2$.
Since $|\mathcal{N}_1(w)| \le 1$ pointwise, the tame $H^2$ product estimate gives
\[
  \|\mathcal{N}(w)\|_{H^2(\mathbb{T}^2)}
  \lesssim \|\mathcal{N}_1(w)\|_{H^2(\mathbb{T}^2)}\|\mathcal{N}_2(w)\|_{L^\infty(\mathbb{T}^2)} + \|\mathcal{N}_2(w)\|_{H^2(\mathbb{T}^2)}.
\]
We estimate each factor using the Sobolev embedding
$H^2(\mathbb{T}^2) \hookrightarrow L^{\infty}(\mathbb{T}^2)$ and the Gagliardo-Nirenberg interpolation inequalities $\|f\|_{L^\infty(\mathbb{T}^2)} \lesssim \|f\|_{L^2}^{\frac{1}{2}} \|f\|_{H^2}^{\frac{1}{2}} $ and $\|f\|_{L^4(\mathbb{T}^2)} \lesssim \|f\|_{L^2}^{\frac{1}{2}} \|f\|_{H^1}^{\frac{1}{2}} $:
\begin{align*}
  \|\mathcal{N}_1(w)\|_{H^2(\mathbb{T}^2)} &\lesssim (1+\|w\|_{H^1(\mathbb{T}^2)})\|w\|_{H^2(\mathbb{T}^2)}, 
  \\
  \|\mathcal{N}_2(w)\|_{L^\infty(\mathbb{T}^2)} &\lesssim \|\nabla w\|_{L^\infty(\mathbb{T}^2)}^2 \lesssim \|w\|_{H^1(\mathbb{T}^2)} \|w\|_{H^3(\mathbb{T}^2)}, \\
  \|\mathcal{N}_2(w)\|_{H^2(\mathbb{T}^2)} &\lesssim \|\nabla w\|_{L^\infty(\mathbb{T}^2)}\|\nabla w\|_{H^2(\mathbb{T}^2)}
                  \lesssim \|w\|_{H^1(\mathbb{T}^2)}\|w\|_{H^3(\mathbb{T}^2)}^\frac{3}{2}.
\end{align*}
Combining and using $\|w\|_{H^1(\mathbb{T}^2)} \le \|w\|_{H^2(\mathbb{T}^2)}$:
\begin{equation}
\label{eq:nonlinear-estimate}
  \|\mathcal{N}(w)\|_{H^2(\mathbb{T}^2)}
  \lesssim 
  (1+\|w\|_{H^2(\mathbb{T}^2)}^3) \|w\|_{H^3(\mathbb{T}^2)} + \|w\|_{H^1(\mathbb{T}^2)}\|w\|_{H^3(\mathbb{T}^2)}^\frac{3}{2}.
\end{equation}
Applying \eqref{eq:nonlinear-estimate} with $w = z_N$, and writing
$\|z_N\|_{H^2(\mathbb{T}^2)} \lesssim Z_N^{1/2}$, yields
\begin{align*}
  &\left|\operatorname{Re}\!\left[\,
    \overline{\beta}\bigl\langle(1-\Delta)\mathcal{N}(z_N),(1-\Delta)z_N\bigr\rangle_{L^2(\mathbb{T}^2)}
  \right]\right| \\
  &\leq C_{\alpha,\gamma}
  \|\mathcal{N}(z_N)\|_{H^2(\mathbb{T}^2)}\|z_N\|_{H^2(\mathbb{T}^2)}\\
  &\leq C_{\alpha,\gamma} \left[ (1+Z_N^\frac{3}{2})\,Z_N^\frac{1}{2} \|z_N\|_{H^3(\mathbb{T}^2)} 
  + Z_N \|z_N\|_{H^3(\mathbb{T}^2)}^\frac{3}{2} \right]
\end{align*}
for some constant $C_{\alpha,\gamma}>0$ depending only on $\alpha$ and $\gamma$. Let $c_{\alpha}>0$ be small, we use Young's inequality to obtain
\begin{equation}
\label{eq:Young-nonlinear}
  C_{\alpha,\gamma} \left[ (1+Z_N^\frac{3}{2})\,Z_N^\frac{1}{2} \|z_N\|_{H^3(\mathbb{T}^2)} 
  + \|z_N\|_{H^3(\mathbb{T}^2)}^\frac{3}{2} \right]
  \le 
  c_{\alpha} \|z_N\|_{H^3(\mathbb{T}^2)}^2 + \dfrac{2C_{\alpha,\gamma}^2}{c_{\alpha}}(1+Z_N^3) Z_N
  + \dfrac{C_{\alpha,\gamma}^4}{c_{\alpha}^3}Z_N^4, \\
\end{equation}
Moreover, differentiating the Fourier series of $z_N$ directly gives
\begin{equation}
\label{eq:H3-vs-grad}
  \|z_N\|_{H^3(\mathbb{T}^2)}^2
  \lesssim \|\nabla(1-\Delta)z_N\|_{L^2(\mathbb{T}^2)}^2 + Z_N.
\end{equation}
Substituting \eqref{eq:Young-nonlinear}--\eqref{eq:H3-vs-grad}
into~\eqref{eq:H3-energy} and absorbing the gradient term into the left-hand side,
we arrive at the differential inequality
\begin{equation}
\label{eq:ZN-diff-ineq}
  \frac{d}{dt}Z_N(t) + c_\alpha\|\nabla(1-\Delta)z_N(\cdot,t)\|_{L^2(\mathbb{T}^2)}^2
  + 2\alpha H\,Z_N(t)
  \le C_{\alpha,\gamma} \bigl(1+Z_N^3(t)\bigr)Z_N(t).
\end{equation}
 
Set $
  R := 2C\|z_0\|_{H^2(\mathbb{T}^2)}^2 + 2,$ so that $Z_N(0)\leq C\|z_0\|_{H^2(\mathbb{T}^2)}^2 < R$. Choose $H_* \ge 1$ large enough that
\begin{equation}
\label{eq:H-choice}
  2\alpha H_* \ge C_{\alpha,\gamma}(1+R^3) + 1.
\end{equation}
We consider $H \ge H_*$ and define
\[
  \tau_* := \sup\bigl\{T \in [0,T_N^*) : Z_N(t) < R \text{ for all } t \in [0,T]\bigr\}.
\]
Since $Z_N(0) < R$ and $t \mapsto Z_N(t)$ is continuous, we have $\tau_* > 0$.
Suppose that $\tau_* < T_N^*$. For all $t \in [0,\tau_*)$
we have $Z_N(t) < R$, hence $Z_N'(t) \le 0$ from \eqref{eq:ZN-diff-ineq} and therefore $Z_N(t) \le Z_N(0)$.
Continuity at $t = \tau_*$ gives $Z_N(\tau_*) \le Z_N(0) < R$, and then
continuity for $t$ slightly beyond $\tau_*$ contradicts the definition of $\tau_*$.
Therefore $\tau_* = T_N^*$, and
\begin{equation}
\label{eq:ZN-uniform-bound}
  Z_N(t) \le Z_N(0) \lesssim \|z_0\|_{H^2(\mathbb{T}^2)}^2
  \qquad\text{for all } t \in [0, T_N^*].
\end{equation}

 
\noindent\textbf{Step 3. Global existence:}
We show that the Galerkin solution extends to $[0,L]$. Suppose for contradiction that $T_N^* \le L$. By \eqref{eq:ZN-uniform-bound},
\begin{align}\label{6364}
  \sup_{t \in [0,T_N^*]} \|z_N(\cdot,t)\|_{H^2(\mathbb{T}^2)}^2 \lesssim Z_N(0),
\end{align}
so the coefficient vector $a^N(t) = \{a_k^N(t)\}_{|k|\le N}$ remains bounded
in $\mathbb{C}^{d_N}$ on $[0,T_N^*)$. Since the right-hand side
of~\eqref{eq:ode-coefficients} is locally Lipschitz in $a^N$ and $a^N(t) = \{a_k^N(t)\}_{|k|\le N}$ can be extended up to $T_N^*$, the solution extends beyond $T_N^*$ by applying the Picard–Lindelöf theorem again, using the terminal value as a new initial condition, contradicting
its maximality. Hence $T_N^* > L$.

In particular, \eqref{eq:ZN-uniform-bound} holds on $[0,L]$, and
integrating \eqref{eq:ZN-diff-ineq} over $[0,L]$ gives, using \eqref{eq:H3-vs-grad} and \eqref{eq:H-choice},
\begin{align}
  \int_0^L \|\nabla(1-\Delta)z_N(\cdot,t)\|_{L^2(\mathbb{T}^2)}^2\,dt
  \lesssim \|z_0\|_{H^2(\mathbb{T}^2)}^2,\text{ and }
  \int_0^L \|z_N(\cdot,t)\|_{H^3(\mathbb{T}^2)}^2\,dt
  \lesssim (1+L)\|z_0\|_{H^2(\mathbb{T}^2)}^2.
  \label{eq:uniform-L2H4}
\end{align}

\noindent\textbf{Step 4. Passing $N \to \infty$:}
The bounds \eqref{eq:ZN-uniform-bound}--\eqref{eq:uniform-L2H4} are uniform in $N$.
From the Galerkin equation~\eqref{eq:Galerkin-z} and the estimate~\eqref{eq:nonlinear-estimate} for the nonlinear term, we obtain
\[
  \|\partial_t z_N\|_{H^1}
  \lesssim \|z_N\|_{H^2(\mathbb{T}^2)} + H\|z_N\|_{H^2(\mathbb{T}^2)} + \|\mathcal{N}(z_N)\|_{H^1},
\]
where each term belongs to $L^2(0,L)$ uniformly in $N$ (with $H$ fixed), so
\begin{equation}
\label{eq:dt-zN-bound}
  \{\partial_t z_N\}_{N\ge 1}
  \quad\text{is bounded in } L^2(0,L;H^1(\mathbb{T}^2)).
\end{equation}
By \eqref{eq:ZN-uniform-bound}, \eqref{eq:uniform-L2H4}, and \eqref{eq:dt-zN-bound},
we may pass to a subsequence (still labelled $z_N$) such that
\begin{alignat}{2}
  z_N &\rightharpoonup z
  &&\quad\text{weakly in } L^2(0,L;H^3(\mathbb{T}^2)),
  \label{eq:weak-H4}\\
  z_N &\stackrel{*}{\rightharpoonup} z
  &&\quad\text{weakly-$*$ in } L^\infty(0,L;H^2(\mathbb{T}^2)).
  \label{eq:weak-star-H3}
\end{alignat}
Since $H^3(\mathbb{T}^2) \hookrightarrow\hookrightarrow H^s(\mathbb{T}^2) \hookrightarrow H^1(\mathbb{T}^2)$
compactly for every $1 < s < 3$, the Aubin--Lions lemma applied to \eqref{eq:dt-zN-bound} and \eqref{eq:weak-star-H3} yields
\begin{equation}
\label{eq:strong-convergence}
  z_N \to z \quad\text{strongly in } C([0,L];H^s(\mathbb{T}^2))
  \quad \text{for every } 1<s < 3.
\end{equation}
The strong convergence~\eqref{eq:strong-convergence} suffices to pass to the limit in
the nonlinear term $\mathcal{N}(z_N)$ (using~\eqref{eq:nonlinear-estimate} and dominated
convergence), and $z$ is a strong solution of~\eqref{eq:z-equation} on $[0,L]$.

It remains to upgrade the regularity.  From \eqref{eq:dt-zN-bound}-\eqref{eq:weak-H4}
we have
\[
  z \in L^2(0,L;H^3(\mathbb{T}^2)), \qquad \partial_t z \in L^2(0,L;H^1(\mathbb{T}^2)).
\]
Since $[H^3(\mathbb{T}^2),\,H^1(\mathbb{T}^2)]_{1/2} = H^2(\mathbb{T}^2)$, interpolation theorem \cite[Chapter III, Theorem 4.10.2]{Amann1995} gives
\[
  z \in C([0,L];H^2(\mathbb{T}^2)).
\]
Passing $N\to \infty$ in \eqref{6364}, the weak lower semicontinuity yields
\begin{align}\label{6429}
  \sup_{t \in [0,L]} \|z(\cdot,t)\|_{H^2(\mathbb{T}^2)}^2 \lesssim \|z_0\|_{H^2(\mathbb{T}^2)}^2,
\end{align}
\sloppy Finally, since $\Psi_0 : H^2(\mathbb{T}^2;\mathbb{C}) \to H^2(\mathbb{T}^2;\mathbb{S}^2)$
is continuous, we conclude $
  m(x,t) := \Psi_0(z(x,t)) \in C([0,L];H^2(\mathbb{T}^2;\mathbb{S}^2)).$ Using \eqref{eq:LLG-two-stage}, Sobolev embedding and standard parabolic estimate, we conclude the desired regularity.
\end{proof}

\begin{lemma}[\bf Large-field stabilisation]
\label{lem:large-field-stabilization}
Let $L>0$, $p^\dagger\in\mathbb{S}^2$,
and let $m_0\in H^2(\mathbb{T}^2;\mathbb{S}^2)$ for some $p>2$ satisfy
\[
\inf_{x\in\mathbb{T}^2} m_0(x)\cdot p^\dagger > 0.
\]
Then for every $\rho>0$ there exists a constant
$H_*=H_*(L,\rho,m_0,p^\dagger,\alpha,\gamma)>0$ such that for every constant $H\ge H_*$ the solution $m$ of
\begin{equation}
\label{eq:LLG-field}
  \partial_t m
  =\alpha\bigl[\Delta m+|\nabla m|^2 m\bigr]
  +\gamma\, m\times\Delta m
  +\gamma H\, m\times p^\dagger
  -\alpha H\, m\times(m\times p^\dagger),
  \quad
  m(\cdot,0)=m_0(\cdot),
\end{equation}
on $\mathbb{T}^2\times(0,L)$ satisfies
$m\in C((0,L];H^4(\mathbb{T}^2;\mathbb{S}^2))$ and
\[
  \|m(\cdot,L)-p^\dagger\|_{H^1(\mathbb{T}^2)}
  +\|m(\cdot,L)-p^\dagger\|_{L^\infty(\mathbb{T}^2)}
  <\rho.
\]
\end{lemma}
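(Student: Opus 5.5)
By rotation invariance we reduce to $p^\dagger=e_3$. We pick $\mathcal R\in SO(3)$ with $\mathcal R p^\dagger=e_3$; then $\mathcal R m$ solves \eqref{eq:LLG-field} with $p^\dagger$ replaced by $e_3$ and initial datum $\mathcal R m_0$. Because $\inf_x m_0\cdot p^\dagger>0$, the datum lies in a closed cap strictly inside the upper hemisphere. Hence $z_0:=(m_{0,1}+im_{0,2})/(1+m_{0,3})\in H^2(\mathbb T^2;\mathbb C)$ with $\|z_0\|_{L^\infty}<1$, and $\|z_0\|_{H^2}$ is controlled by $\|m_0\|_{H^2}$ and the infimum. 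In these coordinates the equation is \eqref{eq:z-equation}. \Cref{lem:z-existence-large-H} provides, for $H\ge H_0(z_0,\alpha,\gamma)$, a strong solution on $[0,L]$ with the uniform bound \eqref{6429}, namely $\sup_{t}\|z(t)\|_{H^2}^2\lesssim\|z_0\|_{H^2}^2$ independently of $H$. Setting $m=\mathcal R^{-1}\Psi_0(z)$ gives the solution of \eqref{eq:LLG-field}, unique by standard uniqueness for strong LLG solutions.

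The key step is quantitative decay in $H$. I would redo the energy inequality of \Cref{lem:z-existence-large-H} at the limit level, or pass $N\to\infty$ in \eqref{eq:ZN-diff-ineq}. Define $Z(t)=\|(1-\Delta)z(t)\|_{L^2}^2$. Once $Z\le R$ on $[0,L]$, with $R$ as in that proof, and $2\alpha H\ge 2C_{\alpha,\gamma}(1+R^3)$, \eqref{eq:ZN-diff-ineq} yields $Z'\le -\alpha H Z$. Gr\"onwall then gives
\[
\|z(\cdot,L)\|_{H^2}^2\lesssim Z(L)\le e^{-\alpha H L}Z(0)\lesssim e^{-\alpha HL}\|z_0\|_{H^2}^2 .
\]
Equivalently, at the Galerkin level $Z_N(t)\le e^{-\alpha Ht}Z_N(0)$, and this bound passes to the limit by weak lower semicontinuity. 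Choosing $H_*$ so that this right-hand side is below $\delta^2$ makes $\|z(L)\|_{H^2}\le\delta$. By $H^2\hookrightarrow L^\infty$, also $\|z(L)\|_{L^\infty}\lesssim\delta$.

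Next we transfer back to the sphere. $\Psi_0$ is smooth with $\Psi_0(0)=e_3$, so pointwise $|\Psi_0(z)-e_3|\le 2|z|$ and $|\nabla\Psi_0(z)|\le 2|\nabla z|$. Hence
\[
\|m(L)-p^\dagger\|_{L^\infty}+\|m(L)-p^\dagger\|_{H^1}\lesssim\|z(L)\|_{L^\infty}+\|z(L)\|_{H^1}\lesssim\delta ,
\]
and choosing $\delta$ small relative to $\rho$ gives the claim. For the regularity $m\in C((0,L];H^4)$, I would use parabolic smoothing. For $t>0$ the solution $z\in L^2H^3$ is bootstrapped with time cutoffs $\chi(t)$ vanishing near $0$: the energy estimate at the $H^3$, then $H^4$ level uses the same tame product estimates for $\mathcal N$, the term $-Hz$ being harmless since it is dissipative. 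This yields $z\in C([t_1,L];H^4)$ for each $t_1>0$, and then $m=\Psi_0(z)$ has the same regularity, since composition with a smooth map preserves $H^4\cap L^\infty$ in two dimensions.

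The main obstacle is making the $H$-dependence honest. The threshold $R$ and constant $C_{\alpha,\gamma}$ in \eqref{eq:ZN-diff-ineq} must be independent of $H$, which they are, since the $-Hz$ term only contributes the positive $2\alpha HZ_N$. The exponential decay must also survive the passage $N\to\infty$, which I would secure by proving it at the Galerkin level, as a pointwise-in-time bound uniform in $N$, and using weak-$*$ lower semicontinuity at the fixed time $L$ via the strong convergence \eqref{eq:strong-convergence}.
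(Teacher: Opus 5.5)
Your proposal is correct, but the decay step differs from the paper's.

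The paper does not reopen the Galerkin argument. It uses only the $H$-uniform bound \eqref{6429} from \Cref{lem:z-existence-large-H} and runs a separate $L^2$ energy estimate on the limit solution. There it bounds the nonlinear term crudely by $\|z\|_{L^\infty}\|\nabla z\|_{L^2}^2\lesssim\|z_0\|_{H^2}^3$, which gives
\[
\|z(L)\|_{L^2}^2\le e^{-2\alpha HL}\|z_0\|_{L^2}^2+\frac{C\|z_0\|_{H^2}^3}{2\alpha H},
\]
so only algebraic decay in $H$. Smallness in $H^1\cap L^\infty$ then comes from interpolating this against the uniform $H^2$ bound.

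You instead use that the constant in \eqref{eq:ZN-diff-ineq} is independent of $H$. This is right: the term $-Hz$ contributes exactly $2\alpha HZ_N$ and never interacts with $\mathcal N$. Hence, once $\alpha H\ge C_{\alpha,\gamma}(1+R^3)$, you get $Z_N(t)\le e^{-\alpha Ht}Z_N(0)$, and you pass to the limit by lower semicontinuity of the $H^2$ norm at $t=L$.

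One small point on that limit. Aubin--Lions with the available bounds only gives strong convergence in $C([0,L];H^s)$ for $s<2$, not $s<3$. Combined with the uniform $H^2$ bound, this still yields $z_N(L)\rightharpoonup z(L)$ weakly in $H^2$. Together with $P_Nz_0\to z_0$ in $H^2$, that is all your argument needs.

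Comparing the two routes: yours gives a stronger conclusion, namely exponential smallness of $z(L)$ in $H^2$ itself. No interpolation is needed, and $H_*$ grows only logarithmically in $1/\rho$, rather than like $\rho^{-4}$ as in the paper's route. The cost is that you depend on the internal differential inequality of the existence proof rather than on its output. The paper's route is more modular, needing only \eqref{6429} and an elementary $L^2$ identity for the limit solution.
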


\begin{proof} By the rotational equivariance of \eqref{eq:LLG-field}, it suffices to treat the case $p^\dagger=e_3$. Therefore, for $H\geq H_*$, we consider the strong solution $z \in C\!\left([0,L];H^2(\mathbb{T}^2)\right)$ of \eqref{eq:z-equation}, given by \Cref{lem:z-existence-large-H}, with initial
datum \(z_0:=\Psi_0^{-1}(Rm_0)\), where \(R\in SO(3)\) is chosen so that
\(Rp^\dagger=e_3\). Set $X(t):=\|z(\cdot,t)\|_{L^2(\mathbb{T}^2)}^2$. Taking the $L^2$-inner product of \eqref{eq:z-equation} with $z$ and extracting the real part gives
\[
  \tfrac{1}{2}X'(t)
  +\alpha\|\nabla z(\cdot,t)\|_{L^2(\mathbb{T}^2)}^2
  +\alpha HX(t)
  =\operatorname{Re}\left[(\alpha+i\gamma)
  \int_{\mathbb{T}^2}\mathcal{N}(z)\,\overline{z}\,dx\right].
\]
Using $|\mathcal{N}(z)|\le|\nabla z|^2$ together with the uniform bound
$\|z\|_{L^\infty(\mathbb{T}^2)}\lesssim \|z\|_{H^2(\mathbb{T}^2)}\lesssim \|z_0\|_{H^2(\mathbb{T}^2)}$ from \eqref{6429},
we estimate
\[
  \left|\operatorname{Re}\left[(\alpha+i\gamma)
  \int_{\mathbb{T}^2}\mathcal{N}(z)\,\overline{z}\,dx\right]\right|
  \lesssim_{\,\alpha,\gamma} \|z\|_{L^\infty(\mathbb{T}^2)}\|\nabla z\|_{L^2(\mathbb{T}^2)}^2 
   \lesssim \|z_0\|_{H^2(\mathbb{T}^2)}^3.
\] 
We arrive at
\[
  X'(t)+2\alpha HX(t) \lesssim_{\,\alpha,\gamma} \|z_0\|_{H^2(\mathbb{T}^2)}^3,
\]
Gronwall's inequality gives
\begin{equation}
\label{eq:L2-decay}
  X(L)\le e^{-2\alpha HL}X(0)+\frac{C_{\alpha,\gamma}\|z_0\|_{H^2(\mathbb{T}^2)}^3}{2\alpha H}
  \rightarrow0\quad \textup{as $H\to\infty$.}
\end{equation} 
By interpolation between $L^2$ and $H^2$, applied to the uniform bound
\eqref{6429},
\[
  \|z(\cdot,L)\|_{H^1 \cap L^\infty} 
  \lesssim \|z(\cdot,L)\|_{L^2(\mathbb{T}^2)}^{1/2}\|z(\cdot,L)\|_{H^2(\mathbb{T}^2)}^{1/2}
  \lesssim \|z_0\|_{H^2(\mathbb{T}^2)}^{1/2} \|z(\cdot,L)\|_{L^2(\mathbb{T}^2)}^{1/2}.
\]
Thus, by \eqref{eq:L2-decay},
\[
  \|z(\cdot,L)\|_{H^1 \cap L^\infty(\mathbb{T}^2)}
  \lesssim_{\,\|z_0\|_{H^2(\mathbb{T}^2)}} \|z(\cdot,L)\|_{L^2(\mathbb{T}^2)}^{1/2}
  \longrightarrow 0
  \quad\text{as }H\to\infty.
\]
Since $m=\Psi_0(z)$ and $\Psi_0(0)=e_3$, 
we have
\[
  \|m(\cdot,L)-e_3\|_{H^1}+\|m(\cdot,L)-e_3\|_{L^\infty(\mathbb{T}^2)}
  \lesssim_{\|m_0\|_{H^2(\mathbb{T}^2)}} 
    \|z(\cdot,L)\|_{H^1}+\|z(\cdot,L)\|_{L^\infty(\mathbb{T}^2)}.
\]
Hence there
exists $H_*$ (depending on $L$, $\rho$, $\|m_0\|_{H^2(\mathbb{T}^2)}$, $\alpha$, $\gamma$)
such that for all $H\ge H_*$,
\[
  \|m(\cdot,L)-e_3\|_{H^1}+\|m(\cdot,L)-e_3\|_{L^\infty(\mathbb{T}^2)}<\rho.
\] 
\end{proof}

\begin{proof}[\bf Proof of \Cref{cor:two-stage-global-a}] 
Since \(m_0\in H^2(\mathbb T^2;\mathbb{S}^2)\) with \(p>2\), we have
	 	\(m_0\in C(\mathbb T^2)\). Hence
	 	\[
	 	\delta_0:=\min_{x\in\mathbb T^2}m_0(x)\cdot e_3>0.
	 	\]
	 	First apply \Cref{lem:large-field-stabilization} with $\rho=\frac{p_{0,3}}{4}$, $L:=\frac{T}{3}$, $
	 	p^\dagger=e_3$ and initial condition $m_0$. 
        Thus there exists \(H_1\ge0\) such that the solution of \eqref{eq:LLG-two-stage} on \([0,L]\) with
	 	\(B=H_1e_3\) satisfies
	 	\[
	 	\|m(\cdot,L)-e_3\|_{L^\infty(\mathbb T^2)}
	 	<\frac{p_{0,3}}{4}.
	 	\]
	 	Consequently, for every \(x\in\mathbb T^2\),
	 	\[
	 	m(x,L)\cdot p_0
	 	=
	 	e_3\cdot p_0+(m(x,L)-e_3)\cdot p_0
	 	\ge
	 	p_{0,3}-|m(x,L)-e_3|
	 	>
	 	\frac{3p_{0,3}}{4}>0.
	 	\] 

        Let \(\varepsilon >0\) be the small number given by \Cref{thm:nullcontrol} for the time interval of length \(L:=\frac{T}{3}\) and the control
	 	region \(\omega\). Now apply \Cref{lem:large-field-stabilization} again with $\rho=\varepsilon/2$, time horizon $L$, $
	 	p^\dagger=p_0$ and $m_0(\cdot)=m(\cdot,L)$. Therefore, there exists
	 	\(H_2\ge0\) such that the solution on \([L,2L]\) with \(B=H_2p_0\) satisfies
	 	\[
	 	\|m(\cdot,2L)-p_0\|_{H^1(\mathbb T^2)}<\varepsilon/2.
	 	\]
	 	Finally, we apply \Cref{thm:nullcontrol} on the last
	 	interval \([2T/3,T]\). Hence, for every $
	 	m_T\in \mathcal T(\varepsilon/2,T/3,p_0)\subset \mathcal T(\varepsilon,T/3,m(2T/3))$ there exists a control
	 	\[
	 	u\in L^\infty(\omega\times(2T/3,T);\mathbb{R}^3)
	 	\]
	 	such that the solution of the controlled LLG equation \eqref{eq:LLG-two-stage} with control $
	 	B(x,t)=\chi_\omega u(x,t)$ for $t\in(2T/3,T)$ 
	 	satisfies
	 	\[
	 	m(\cdot,T)=m_T(\cdot).
	 	\] 
	 	Concatenating the three controls $H_1e_3$, $H_2p_0$ and $\chi_\omega u(x,t)$ proves the desired result.
	 \end{proof}

		 \noindent\textbf{Acknowledgment:} The authors would like to express their gratitude Professor Beniamin Goldys for valuable discussion and guidance of this article. The authors would like to express their gratitude to Professor Marius Tucsnak for pointing out very useful references. H. M. Tai is partially supported by Australian Research Council Discovery Project DP240100781.  

\appendix

	\section{Derivation of projected equation \eqref{eq. projected eq} under stereographic coordinates}\label{app. Derivation of projected equation under stereographic coordinates}
	Let $m$ be a solution to the controlled equation \eqref{eq. controlled LLG} subject to control $u$. We recall $v$ and $\Psi_0$ given in \eqref{def. stereographic projection and inverse} such that $\Psi_0(v)=m$. It is computed that \[
	\nabla_v \Psi_0(v)
	=
	\frac{2}{h^2}
	\begin{pmatrix}
		1-v_1^2+v_2^2 & -2v_1v_2\\
		-2v_1v_2 & 1+v_1^2-v_2^2\\
		-2v_1 & -2v_2
	\end{pmatrix}
	\]
	where $h=h(v):=1+|v|^2$. It is immediate that
	\begin{align}
		(\nabla_v \Psi_0)^\top\nabla_v \Psi_0=\dfrac{4}{h^2}I\h{15pt} \text{or equivalently} \h{15pt}
		\partial_{v_\ell}\Psi_0\cdot \partial_{v_j}\Psi_0=\frac{4}{h^2}\delta_{\ell j}\h{5pt} \text{for $\ell$, $j=1,2$.}
		\label{eq. D Psi top D Psi = 4I/h^2 and inverse of D Psi}
	\end{align}
	Unless specified, the map $\Psi_0$ and its derivatives are all evaluated at $v$.
	
	\paragraph{\bf Step 1. Harmonic map heat flow part:} Direct differentiation shows that
	\begin{align}\label{eq. proj eq part 1a}
		\p_t v = \dfrac{h^2}{4}(\nabla_v \Psi_0)^\top\p_t m	\h{10pt} \text{and} \h{10pt}
		\dfrac{h^2}{4}(\nabla_v \Psi_0)^\top \Delta m=\Delta v 
		-2(\nabla v)\nabla \log h+ \dfrac{2}{h}|\nabla v|^2 v
	\end{align} 
	where $
	\nabla \log h(v)
	=
	\nabla_x[\log h(v(x))]
	=
	\frac{1}{h(v)}(\nabla v)^\top \nabla_v h(v).$
	By $|\Psi_0|^2=1$, we have
	\begin{align}\label{eq. proj eq part 1b}\dfrac{h^2}{4}(\nabla_v \Psi_0)^\top |\nabla m|^2 m=0.
	\end{align}
	\paragraph{\bf Step 2. Precession term $\dfrac{h^2}{4}(\nabla_v \Psi_0)^\top (m\times \Delta m)$:} 
	Identity \eqref{eq. D Psi top D Psi = 4I/h^2 and inverse of D Psi} implies that  $\{\partial_{v_1}\Psi_0,\partial_{v_2}\Psi_0\}$ spans the tangent plane $T_m\mathbb S^2$. There exist scalars $a_1,a_2,\beta$ such that
	\begin{equation}\label{eq:DeltaDecomp}
		\Delta m = \sum^2_{j=1}a_j\partial_{v_j}\Psi_0   + \beta m.
	\end{equation}
	For each $\ell=1,2$, we take inner product in \eqref{eq:DeltaDecomp} with $\partial_{v_\ell}\Psi_0 $:
	\begin{equation}\label{eq:aformula}
		\partial_{v_\ell}\Psi_0 \cdot\Delta m 
		= 4a_\ell/h^2.
	\end{equation}
	Moreover, the normal part does not contribute to the cross product: $m\times\Delta m = \sum^2_{j=1} m\times(a_j\partial_{v_j}\Psi_0  ).$ Therefore, we define the vector $b$ and express it by
	\begin{equation*} 
		b_\ell
		:=\Big[\frac{h^2}{4}(\nabla_v \Psi_0 )^{\top}(m\times\Delta m)\Big]_\ell
		=\sum^2_{j=1}\frac{h^2}{4}\,\p_{v_\ell} \Psi_{0,i} \,(m\times\Delta m)_i 
		=\sum^2_{j=1}\Big[\frac{h^2}{4}\,\partial_{v_\ell}\Psi_0 \cdot(m\times \partial_{v_j}\Psi_0 )\Big]a_j.
	\end{equation*} 
	Define the matrix $\widetilde{J}$ with entries
	\begin{equation}\label{eq:defC}
		\widetilde{J}_{\ell j}:=\frac{h^2}{4}\,\partial_{v_\ell}\Psi_0 \cdot(m\times \partial_{v_j}\Psi_0 ),
		\qquad\text{so that}\qquad
		b=\widetilde{J} a.
	\end{equation}
	
	Since $\partial_{v_1}\Psi_0\times \partial_{v_2}\Psi_0$ is parallel to $m$. There is $\sigma(v)\in \mathbb{R}$ such that $
	\partial_{v_1}\Psi_0\times \partial_{v_2}\Psi_0=\sigma(v)\,m.$ Taking norms and using $|\partial_{v_1}\Psi_0|=|\partial_{v_2}\Psi_0|=\frac{2}{h}$, we see that $|\sigma(v)|=\frac{4}{h^2}$. Evaluating at $v=0$, it yields that $h(0)=1$, $\Psi_0(0)=(0,0,1)^\top$, $\partial_{v_1}\Psi_0(0)=(2,0,0)^\top$ and $\partial_{v_2}\Psi_0(0)=(0,2,0)^\top$. Hence
	\[
	\partial_{v_1}\Psi_0(0)\times \partial_{v_2}\Psi_0(0)=(2,0,0)^\top\times(0,2,0)^\top=(0,0,4)^\top=4\Psi_0(0),
	\]
	so $\sigma(v)=\frac{4}{h^2}$ for all $v$. Therefore
	\begin{equation}\label{eq:mxej2}
		\partial_{v_1}\Psi_0\times \partial_{v_2}\Psi_0=\frac{4}{h^2}\,m.
	\end{equation}

	Use scalar triple product 
    and \eqref{eq:mxej2}:\[(m\times \partial_{v_1}\Psi_0)\cdot \partial_{v_2}\Psi_0 = m\cdot(\partial_{v_1}\Psi_0\times \partial_{v_2}\Psi_0)
	=\frac{4}{h^2}.\]
	Because $\{m=\Psi_0(v),\partial_{v_1}\Psi_0(m),\partial_{v_2}\Psi_0(m)\}$ is a orthogonal set and $|\partial_{v_2}\Psi_0|^2=\frac{4}{h^2}$, therefore
	\begin{equation}\label{eq:mxej}
		m\times \partial_{v_1}\Psi_0=\partial_{v_2}\Psi_0.
	\end{equation}
	Similarly, one gets
	\begin{equation}\label{eq:mxej12}
		m\times \partial_{v_2}\Psi_0=-\partial_{v_1}\Psi_0.
	\end{equation}

	Using \eqref{eq:defC}, \eqref{eq:mxej}, \eqref{eq:mxej12}, and $\partial_{v_1}\Psi_0\perp \partial_{v_2}\Psi_0$, we have
	\begin{align*}
		\widetilde{J}_{11}&=\frac{h^2}{4}\partial_{v_1}\Psi_0\cdot(m\times \partial_{v_1}\Psi_0)
		=0,\qquad
		\widetilde{J}_{12}=\frac{h^2}{4}\partial_{v_1}\Psi_0\cdot(m\times \partial_{v_2}\Psi_0) 
		=-1,\\
		\widetilde{J}_{21}&=\frac{h^2}{4}\partial_{v_2}\Psi_0\cdot(m\times \partial_{v_1}\Psi_0) 
		=1,\qquad
		\widetilde{J}_{22}=\frac{h^2}{4}\partial_{v_2}\Psi_0\cdot(m\times \partial_{v_2}\Psi_0)
		=0.
	\end{align*}
	Therefore
	\begin{equation}\label{eq:CisJ}
		\widetilde{J}=\begin{pmatrix}0&-1\\[2pt]1&0\end{pmatrix}=J.
	\end{equation} 
	From \eqref{eq:defC} and \eqref{eq:CisJ}, we have $b=Ja$.
	Using \eqref{eq:aformula}, we have $a=\frac{h^2}{4}(\nabla_v \Psi_0 )^\top\Delta m$, hence \eqref{eq. proj eq part 1a} implies
	\begin{align}\label{eq. proj eq part 2}
		b=\frac{h^2}{4}(\nabla_v \Psi_0 )^{\top}\bigl(m\times\Delta m\bigr)
		=J\Big[\frac{h^2}{4}(\nabla_v \Psi_0 )^{\top}\Delta m\Big]
		=J\Big[\Delta v 
		-2(\nabla v)\nabla \log h+ \dfrac{2}{h}|\nabla v|^2 v\Big].
	\end{align}
	
	\paragraph{\bf Step 3. First part with control:} Write
	\[
	u=u^\parallel+u^\perp,\qquad u^\perp:=(u\cdot m)m,\qquad u^\parallel :=u-(u\cdot m)m\in T_m\mathbb S^2.
	\]
	Since $u^\parallel \in\mathrm{span}\{\partial_{v_1}\Psi_0 ,\partial_{v_2}\Psi_0 \}$, there exists $b^*=(b_1^*,b_2^*)\in\mathbb R^2$ such that
	\begin{equation}\label{eq:utop}
		u^\parallel =\sum^2_{j=1}\partial_{v_j}\Psi_0  b^*_j.
	\end{equation}
	Let $\ell=1,2$. Taking inner product in \eqref{eq:utop} with $\partial_{v_\ell}\Psi_0 $, the relation \eqref{eq. D Psi top D Psi = 4I/h^2 and inverse of D Psi} and the fact that $\partial_{v_\ell}\Psi_0 \perp m$ imply
	\begin{equation}\label{eq:bformula}
		\partial_{v_\ell}\Psi_0 \cdot u
		=\partial_{v_\ell}\Psi_0 \cdot u^\parallel  
		= \sum^2_{j=1}(\partial_{v_\ell}\Psi_0 \cdot \partial_{v_j}\Psi_0 )b^*_j=\frac{4}{h^2}b^*_\ell.
	\end{equation}
	Also, we have
	\[
	m\times u=m\times u^\parallel =\sum^2_{j=1}m\times(\partial_{v_j}\Psi_0  b^*_j).
	\]
	Define the vector
	\begin{equation*}
		a^*_\ell
		:=\Big[\frac{h^2}{4}(\nabla_v \Psi_0 )^{\top}(m\times u)\Big]_\ell
		=\frac{h^2}{4}\,\p_{v_\ell} \Psi_{0} \cdot (m\times u)
		=\sum^2_{j=1}\Big[\frac{h^2}{4}\partial_{v_\ell}\Psi_0 \cdot(m\times \partial_{v_j}\Psi_0 )\Big]b^*_j.
	\end{equation*} 
	and the matrix
	\[
	H_{\ell j}:=\frac{h^2}{4}\partial_{v_\ell}\Psi_0 \cdot(m\times \partial_{v_j}\Psi_0 ),
	\qquad\text{so that}\qquad a^*=Hb^*.
	\]
	
	Using \eqref{eq:mxej}--\eqref{eq:mxej2} and orthogonality:
	\begin{align*}
		H_{11}&=\frac{h^2}{4}\partial_{v_1}\Psi_0 \cdot(m\times \partial_{v_1}\Psi_0 )
		=0,\qquad
		H_{12}=\frac{h^2}{4}\partial_{v_1}\Psi_0 \cdot(m\times \partial_{v_2}\Psi_0 )
		=-1,\\
		H_{21}&=\frac{h^2}{4}\partial_{v_2}\Psi_0 \cdot(m\times \partial_{v_1}\Psi_0 )
		=1,\qquad
		H_{22}=\frac{h^2}{4}\partial_{v_2}\Psi_0 \cdot(m\times \partial_{v_2}\Psi_0 )
		=0.
	\end{align*}
	Therefore
	\begin{equation*}
		H=\begin{pmatrix}0&-1\\[2pt]1&0\end{pmatrix}=J.
	\end{equation*}
Hence, using \eqref{eq:bformula} and $a^*=Hb^*=Jb^*$, we finally obtain
	\begin{align}\label{eq. proj eq part 3}
		a^*=\frac{h^2}{4}(\nabla_v \Psi_0 )^{\top}(m\times u)
		=
		J\Big[\frac{h^2}{4}(\nabla_v \Psi_0 )^{\top}u\Big].
	\end{align}
	
	\paragraph{\bf Step 4. Second part of control:}  
	Define the vector
	\begin{equation}\label{eq:defQ-mmx}
		N_\ell
		:=\Big[\frac{h^2}{4}(\nabla_v \Psi_0 )^{\top}\big(m\times(m\times u)\big)\Big]_\ell
		=\sum^2_{i=1}\frac{h^2}{4}\,\p_{v_\ell} \Psi_{0,i} \,\big(m\times(m\times u)\big)_i .
	\end{equation}
	Insert the triple product identity $m\times(m\times u)=m(m\cdot u)-u$ into \eqref{eq:defQ-mmx}:
	\begin{align*}
		N_\ell
		=\sum^2_{i=1}\frac{h^2}{4}\,\p_{v_\ell} \Psi_{0,i} \,\big[m_i(m\cdot u)-u_i\big]
		&=\sum^2_{i=1}\frac{h^2}{4}(m\cdot u)\,\p_{v_\ell} \Psi_{0,i} m_i \;-\;\frac{h^2}{4}\,\p_{v_\ell} \Psi_{0,i} u_i \\
		&=-\;\sum^2_{i=1}\frac{h^2}{4}\,\p_{v_\ell} \Psi_{0,i} u_i 
	\end{align*} 
	We conclude
	\begin{align}\label{eq. proj eq part 4}
		\frac{h^2}{4}(\nabla_v \Psi_0 )^{\top}\big[m\times(m\times u)\big]
		=-\,\frac{h^2}{4}(\nabla_v \Psi_0 )^{\top}u
	\end{align}
	
	Summing up \eqref{eq. proj eq part 1a}, \eqref{eq. proj eq part 1b}, \eqref{eq. proj eq part 2}, \eqref{eq. proj eq part 3}, \eqref{eq. proj eq part 4}, we obtain \eqref{eq. projected eq}.

	\bibliographystyle{abbrv} 
	\bibliography{bio}
	
\end{document}